\newtheorem{lem}{Lemma}[section]
\newtheorem{thm}[lem]{Theorem}
\newtheorem{pro}[lem]{Proposition}
\newtheorem{defn}[lem]{Definition}
\newtheorem{rem}[lem]{Remark}
\newtheorem{asp}[lem]{Assumption}
\newcommand{\bqn}{\begin{equation}}
\newcommand{\eqn}{\end{equation}}
\newcommand{\beqx}{\begin{equation*}}
\newcommand{\eeqx}{\end{equation*}}
\newcommand{\barr}{\begin{array}}
\newcommand{\earr}{\end{array}}
\newcommand{\beqn}{\begin{eqnarray}}
\newcommand{\eeqn}{\end{eqnarray}}
\newcommand{\beqnx}{\begin{eqnarray*}}
\newcommand{\eeqnx}{\end{eqnarray*}}
\newcommand{\bmt}{\begin{multline}}
\newcommand{\emt}{\end{multline}}
\numberwithin{equation}{section}
\newcommand{\D}{\partial}
\newcommand{\bbr}{{\mathbb R}}
\newcommand{\al}{\alpha}
\newcommand{\be}{\beta}
\newcommand{\ga}{{\gamma}}
\newcommand{\Ga}{{\Gamma}}
\newcommand{\ve}{\varepsilon}
\newcommand{\de}{\delta}
\newcommand{\la}{\lambda}
\newcommand{\Om}{\Omega}
\newcommand{\er}{\eqref}
\newcommand{\lb}{\label}
\title{Time-periodic flows of electrons and holes in semiconductor devices}
\author{%
{\large\sc Toru Kan${}^1$} and {\large\sc Masahiro Suzuki${}^2$}}
\date{%
\normalsize
${}^1$%
Department of Mathematical Sciences, Osaka Prefecture University, 
\\
1-1 Gakuen-cho, Naka-ku, Sakai, 599-8531, Japan
\\ [7pt]
${}^2$%
Department of Computer Science and Engineering, 
Nagoya Institute of Technology,
\\
Gokiso-cho, Showa-ku, Nagoya, 466-8555, Japan
}
\begin{document}

\maketitle

\begin{abstract}
The main purpose of this paper is mathematical analysis 
on time-periodic flows of electrons and holes in semiconductors.
The flows appear in a situation 
that alternating-current voltages are applied to devices.
In this paper, we study the drift-diffusion model for semiconductors
in a three-dimensional bounded domain
and investigate the existence and stability of time-periodic solutions.
We first derive the uniform-in-time estimate of time-global solutions,
and then prove by the relative entropy method
that the difference of any two solutions decays exponentially 
fast as time tends to infinity.
These facts enable us to show 
the unique existence and global stability of time-periodic solution.
\end{abstract}

\begin{description}
\item[{\it Keywords:}]
drift-diffusion model; parabolic--elliptic system; initial--boundary value problem; \\
mixed-boundary condition; time-periodic solution; global stability

\item[{\it 2010 Mathematics Subject Classification:}]
35B10; 
35B35; 
35B40; 
35B45; 
82D37  
\end{description}

\newpage

\section{Drift-diffusion model}

This paper is concerned with the asymptotic behavior 
of solutions to the drift-diffusion model for semiconductors.
The model was proposed by Roosbroeck~\cite{W-V.R} as
a system of partial differential equations 
for the transport of electrons and holes in semiconductor devices. 
Let $\Omega \subset \mathbb R^3$ be a domain 
occupied by a semiconductor device.
Then the model is written as the parabolic--elliptic system
\begin{subequations}\label{ddeq}
\begin{equation}\label{ddnd}
\left\{
\begin{aligned}
&n_t =\nabla \cdot (\nabla n -n \nabla v) -R(n,p), & \\
&p_t =\nabla \cdot ( \nabla p +p\nabla v) -R(n,p), & \\
&\ve \Delta v=n-p-D(x), & (t,x) \in I \times \Om,
\end{aligned}
\right.
\end{equation}
where $I \subset \mathbb R$ is an open interval with $\sup I =\infty$.
The unknown functions $n$, $p$ and $v$ stand for the electron density, 
the hole density and the electrostatic potential, respectively.
The recombination-generation term $R$ accounts 
for instantaneous generation or annihilation of electron-hole pairs.
The doping profile $D$ denotes the density of ionized impurities 
in semiconductors, and determines the performance of devices.
The positive constant $\varepsilon$ is the scaled Debye length.
For more details of this model, see \cite{JJ,JA,MRS,NS5}.

We divide the boundary $\partial\Omega$ into two parts $\Gamma_D$ and $\Gamma_N$,
and impose a mixed boundary condition as follows.
\begin{equation}\label{bdnd}
\left\{
\begin{aligned}
&n =N_b(x), \ p=P_b(x), \ v=V_b(t,x) & & (t,x) \in I \times \Gamma_D, \\
&(\nabla n -n\nabla v) \cdot \bold{n}=(\nabla p +p\nabla v) \cdot \bold{n}=0, \
 \varepsilon \nabla v \cdot \bold{n}+b(x)v=g(t,x) && (t,x) \in I \times \Gamma_N.
\end{aligned}
\right.
\end{equation}
\end{subequations}
Here $N_b$, $P_b$, $V_b$, $b$ and $g$ are given functions
and $\bold{n}$ denotes the outer unit normal vector to $\partial \Omega$.
From a physical point of view,
this boundary condition corresponds to Ohmic, Schottky or Metal-Oxide contact
arising in widely used semiconductor devices
such as MOSFETs, p-n diodes, thyristors and so on.

There have been many researches on 
the existence and asymptotic behavior of solutions to 
the initial--boundary value problem of \er{ddeq} 
with time-independent boundary data. 
A pioneer work was made by Mock \cite{M.S.M1,M.S.M2,M.S.M3} 
for the simpler case $\Gamma_D=\emptyset$ and $b=g=0$.
It was shown that a solution exists globally in time and converges to a stationary solution.
Physically speaking, the boundary condition in this case
does not allow any electron and hole to flow through the boundary.
Gajewski and Gr\"oger \cite{GG86} 
proved the time-global solvability for the more relevant case
$\Gamma_D\neq\emptyset$, $g\neq 0$ and $b \geq0$ (see also \cite{Ga3}).
In this case, electrons and holes can flow through the boundary.
Furthermore, they investigated the asymptotic state of solutions 
for a special boundary data $N_b$, $P_b$ and $V_b$,
and then showed the global stability of a special stationary solution $(N,P,V)$ 
which represents a thermal equilibrium, 
that is, 
\begin{equation}\label{noflow}
NP=1, \quad \nabla(\log N-V)=\nabla(\log P+V)=0.
\end{equation}
The second and third equalities mean that the currents vanish,
and therefore their results do not cover physically important situations
that semiconductor devices are used in integrated circuits.

For general time-independent boundary data, 
Gr\"oger \cite{Gr} constructed a stationary solution in which the current is flowing (see also \cite{Ga1,T.S.80}).
Of course, it is of great interest to study its stability.
A difficulty in proving the global stability lies in the derivation of uniform-in-time estimates.
The study \cite{GG86} used  \eqref{noflow} to resolve this difficultly.
For a simpler case $b=g=0$, Fang and Ito \cite{FI1,FI2,FI3} derived the uniform-in-time estimate,
and then constructed a compact attractor.
The relation between the attractor and stationary solutions was not clarified.
In this paper, we first extend their result on the uniform-in-time estimate to the case $b \neq 0$ and $g\neq 0$
assuming 
that the area of $\{ b \neq 0 \}$ is small.
The set $\{ b \neq 0 \}$ corresponds to interfaces between semiconductors and oxides.
It is worth pointing out that the uniform-in-time estimate can be obtained even in the case 
when the current is large and/or the boundary data $g$ and $V_{b}$ are time-dependent.


Besides stationary flows, time-periodic flows are also physically important.
Indeed, time-periodic flows appear when PN junction diodes act like a rectifier 
by converting alternating current into direct current.
In a one-dimensional case, the authors \cite{KS1} studied
the unique existence and global stability of time-periodic solutions
in a situation that the applied voltage is periodic in time.
This time-periodic solution has nonzero currents.
Seidman \cite{T.S.} also investigated time-periodic solutions
for a generalized drift-diffusion model. 
In this paper, we show 
the global stability of time-periodic solutions
for time-periodic boundary data.
Our main theorem also ensures 
the stability of stationary solutions in which small currents are flowing.



\section{Main theorems}

We begin with introducing notation and making assumptions to be used throughout the paper.
For $ 1 \leq q \leq \infty$, 
$|\cdot|_q$ and $|\cdot|_{q,\Gamma_N}$ denote the norms of the Lebesgue spaces 
$L^q(\Omega)$ and $L^q(\Gamma_N)$, respectively.
Furthermore, $\Vert\cdot\Vert_1$ stands for
the norm of the Sobolev space $H^1(\Omega)$.
We denote by $H^1_D(\Omega)$ the subspace
$\{ f \in H^1(\Omega);f=0 \ \mbox{on} \ \Gamma_D\}$
and by $H^1_D(\Omega)^*$ its dual space.
The notation $f'$ means the derivative of a function $f$ with respect to $t$.
For $a,\sigma \in \mathbb{R}$, we write $a_+ :=\max \{ a,0\}$, $a_- :=\min \{ a,0\}$
and $a_{\sigma}:=(a-\sigma)_{+}+\sigma=\max\{a,\sigma\}$.


\begin{asp}\lb{assp1}
We assume conditions (H1)--(H8) below.
\begin{enumerate}
\item[(H1)]
$\Omega \subset \mathbb{R}^3$ is a bounded domain with Lipschitz boundary.
\item[(H2)]
$\partial \Omega$ consists of the disjoint union of $\Gamma_D$ and $\Gamma_N$, 
and the measure of $\Gamma_D$ is nonzero. 
\item[(H3)]
The recombination-generation term $R$ is given by the Shockley-Read-Hall form,
that is,
\begin{equation*}
R(n,p):=c_{R} \frac{np-1}{n+p+2},
\end{equation*}
where $c_R$ is a positive constant.
\item[(H4)]
$N_b=N_b(x)$, $P_b=P_b(x)$, $V_b=V_b(t,x)$, $D=D(x)$, $b=b(x)$ and $g=g(t,x)$ 
are given functions satisfying
$N_b,P_b \in H^1(\Omega) \cap L^\infty (\Omega)$,
$V_b \in W^{1,\infty}(\mathbb R;W^{1,\rho}(\Omega))
\cap L^\infty (\mathbb R \times \Omega)$,
$D \in L^\infty (\Omega)$,
$b \in L^\infty (\Gamma_N)$,
$g \in W^{1,\infty}(\mathbb R;L^r (\Gamma_N))$
for some $\rho>3$ and $r>2$.
\item[(H5)]
$|N_b|_\infty$, $|P_b|_\infty$, $\|N_b\|_1$, $\|P_b\|_1$, 
$\sup_{t \in  \mathbb R}|V_b(t)|_{\infty}$,
$\sup_{t \in  \mathbb R} \| V_b(t)\|_1$, 
$|D|_\infty$,
$|b|_{\infty,\Gamma_N}$, $\sup_{t \in \mathbb R} |g(t)|_{r,\Gamma_N}$ $\le C_0$
for some positive constant $C_0$.
\item[(H6)]
$N_b$, $P_b \ge c_0$ in $\Omega$ with some positive constant $c_0$.
\label{Hlb}
\item[(H7)]
$b \ge 0$ on $\Gamma_N$.
\item[(H8)]
$\displaystyle \de:=\sup_{t \in \mathbb R} (|\nabla \log N_b-\nabla V_b(t)|_\infty^2
+|\nabla \log P_b+\nabla V_b(t)|_\infty^2+\|V_b'(t)\|_{W^{1,\rho}}
+|g'(t)|_{r,\Gamma_N})
+|\log(N_bP_b)|_\infty$ $<\infty.$
\end{enumerate} 
\end{asp}

Let us also give the definition of solutions of \eqref{ddeq}. 

\begin{defn}\lb{DOS}
We say that $(n,p,v)$ is a solution of \eqref{ddeq}
if it satisfies the following conditions. 
\begin{enumerate}
\item[(i)]
For any bounded interval $J \subset I$,
\begin{equation}\lb{reg0}
\begin{gathered}
n-N_b \in L^2(J;H^1_D(\Om)) \cap L^\infty (J \times \Om), 
\quad n' \in L^2(J;H^{1}_D(\Om)^*),
\\
p-P_b \in L^2(J;H^1_D(\Om)) \cap L^\infty (J \times \Om),
\quad p' \in L^2(J;H^{1}_D(\Om)^*),
\\
v-V_b \in C(I;H^1_D(\Om)).
\end{gathered} 
\end{equation}
\item[(ii)] 
$n,p \geq 0$ a.e. in  $I \times \Omega$.
\item[(iii)]
For any $\phi_1,\phi_2,\phi_3 \in H^1_D(\Om)$ and a.e. $t \in I$,
\begin{subequations}\label{ddspw}
\begin{gather}
\langle n',\phi_1 \rangle
+\int_\Om \left\{ (\nabla n -n \nabla v) \cdot \nabla \phi_1 +R(n,p)\phi_1 \right\} dx=0,
\lb{ddewn} \\
\langle p',\phi_2 \rangle
+\int_\Om \left\{ (\nabla p +p \nabla v) \cdot \nabla \phi_2 +R(n,p)\phi_2 \right\} dx=0,
\lb{ddewp} \\
\ve \int_\Om \nabla v \cdot \nabla \phi_3 dx+\int_{\Gamma_N} (bv-g) \phi_3 dS
 =-\int_\Om (n-p-D)\phi_3 dx.
\lb{ddewv}
\end{gather}
\end{subequations}
\end{enumerate}
Furthermore, if $(n,p,v)$ is a solution of \eqref{ddeq} with $I=\bbr$ 
and additionally satisfies the condition (iv) below,
we say that $(n,p,v)$ is a time-periodic solution of \eqref{ddeq} with period $T_*$.
\begin{itemize}
\item[(iv)]
$(n,p,v)(t+T_*,x)=(n,p,v)(t,x)$
for some constant $T_*>0$.
\end{itemize}
\end{defn}

We are now in a position to state our main theorems.
As mentioned above, Gajewski and Gr\"oger~\cite{GG86} 
considered the problem \er{ddeq}
with the initial condition $(n,p)(0,\cdot)=(n_0,p_0)$,
and showed the global existence of a solution $(n,p,v)$
for any nonnegative initial data $(n_0,p_0) \in L^\infty(\Om)\times L^\infty(\Om)$.
The result on the time-global solvability will be introduced precisely in Section~\ref{prelim}.
Our first theorem deals with the uniform-in-time estimates of $(n,p)$. 
Here and subsequently, we fix $q_{0}>2$ and set
\begin{gather*}
\theta:=|b|_{q_{0},\Gamma_{N}}.
\end{gather*}

\begin{thm}\lb{uesthm}
There exist positive constants $\hat \theta$ and $\hat C$ depending only on $\max\{\delta,1\}$,
$c_0$, $C_0$, $\Omega$, $\Gamma_D$, $\rho$, $r$, $\ve$ and $c_R$
such that if $\theta<\hat \theta$, then any solution $(n,p,v)$ of \eqref{ddeq} satisfies
\begin{equation}\lb{gsulb}
\limsup_{t \to \infty}\left( \left| \frac{1}{n(t)} \right|_\infty +\left| \frac{1}{p(t)} \right|_\infty
 +|n(t)|_\infty +|p(t)|_\infty \right) \le \hat C.
\end{equation}
\end{thm}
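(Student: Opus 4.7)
The plan is to combine an entropy/dissipation balance with an Alikakos--Moser iteration, and to handle the lower bounds by running the same machinery on $1/n$ and $1/p$.

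\textbf{Step 1 (entropy--dissipation identity).} After first verifying that $n,p>0$ a.e.\ (e.g.\ by truncation plus (H6)), test \eqref{ddewn} with $\phi_1=\log n-v-(\log N_b-V_b)$ and \eqref{ddewp} with $\phi_2=\log p+v-(\log P_b+V_b)$; by (H4)--(H6) these lie in $H^1_D(\Om)$. The structural identities $\nabla n-n\nabla v=n\nabla(\log n-v)$ and $\nabla p+p\nabla v=p\nabla(\log p+v)$ extract the nonnegative dissipation
\begin{equation*}
\mathcal D(t):=\int_\Om n|\nabla(\log n-v)|^2+p|\nabla(\log p+v)|^2\,dx+\int_\Om R(n,p)\log(np)\,dx,
\end{equation*}
whose recombination part is nonnegative because $R(n,p)$ and $\log(np)$ have the same sign by (H3). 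Testing \eqref{ddewv} against $v-V_b$ and bookkeeping, one assembles an entropic/electrostatic free energy $\mathcal E(t)$ from $n\log n+p\log p$, $\tfrac{\ve}{2}|\nabla v|^2$, $\tfrac{b}{2}v^2$ on $\Gamma_N$, and boundary traces, leading to
\begin{equation*}
\dt\mathcal E(t)+c\,\mathcal D(t)\le C,
\end{equation*}
where $\delta$, $C_0$, $c_0$ absorb the contributions from $V_b',g'$ via (H8). This yields uniform-in-time control of $\mathcal E(t)$ and a time-averaged bound on $\mathcal D(t)$; in particular, low-order $L^q$ bounds on $n,p$ and an $L^s$-bound on $\nabla v$ (via elliptic regularity for the Poisson equation with mixed Dirichlet/Robin data on a Lipschitz domain) for some $s>3$.

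\textbf{Step 2 (upper bound by Moser iteration).} Fix $k\ge|N_b|_\infty$ so that $(n-k)_+^{q-1}\in H^1_D(\Om)$ and test \eqref{ddewn}: after integration by parts,
\begin{equation*}
\fr{1}{q}\dt\int_\Om (n-k)_+^q\,dx+\fr{4(q-1)}{q^2}\int_\Om|\nabla (n-k)_+^{q/2}|^2\,dx=(q-1)\int_\Om n(n-k)_+^{q-2}\nabla(n-k)_+\cdot\nabla v\,dx-\int_\Om R(n,p)(n-k)_+^{q-1}\,dx.
\end{equation*}
The drift is estimated by H\"older, Sobolev $H^1\hookrightarrow L^6$ in $\bbr^3$, and the $L^s$-bound on $\nabla v$; for $k$ large enough, it is absorbed into the dissipation and the $R$-term (which is nonnegative on $\{n>k\}$ when $k$ is large). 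One obtains the Alikakos-type recursion $A_{2q}\le Cq^\alpha A_q^{1+\beta}$ for $A_q(t):=\int_\Om(n-k)_+^q\,dx$, uniformly in $t$ for $t$ large; iterating yields $\limsup_{t\to\infty}|n(t)|_\infty\le\hat C$, and the same argument via \eqref{ddewp} bounds $p$.

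\textbf{Step 3 (lower bound).} With the $L^\infty$ upper bounds in hand, $\nabla v$ is controlled in $L^\infty_tL^s(\Om)$ for some $s>3$. The function $w=1/n$ formally satisfies a drift--diffusion equation of the same type with reversed drift, a nonnegative gradient source, and recombination source $R(n,p)/n^2=\nu(p-1/n)/[n^2(n+p+2)]$ that is \emph{absorbing} when $1/n$ is large. Taking $(w-\kappa)_+^{q-1}$ as test function for $\kappa\ge 1/c_0$ (which vanishes on $\Gamma_D$ by (H6)) and running the Alikakos scheme produces $\limsup|1/n|_\infty\le\hat C$; the estimate for $1/p$ is identical.

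\textbf{Main obstacle.} The hard part is making the Moser recursion hold \emph{uniformly in $t$} rather than with constants growing in $t$. This rests on two ingredients being linked: the entropy dissipation $\mathcal D$ of Step 1, which must provide time-uniform low-order integrability of $n$ and $1/n$ (not merely $L^2_{\mathrm{loc}}$-in-time), and time-uniform elliptic regularity for \eqref{ddewv} with the inhomogeneous Robin datum $g\in L^\infty(\bbr;L^r(\Gamma_N))$ with $r>2$ on a Lipschitz domain carrying a Dirichlet part $\Gamma_D$ of positive measure. The extension from the case $b=g=0$ treated by Fang--Ito to the present $b\ge 0$, $g\ne 0$ is nontrivial precisely because of this mixed elliptic problem, whose $W^{1,s}$-estimate with $s>3$ is the critical analytic input closing the bootstrap.
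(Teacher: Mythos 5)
Your proposal has the right large-scale shape (entropy--dissipation balance, $L^1$-to-$L^\infty$ bootstrap by Moser iteration), and Step~2 is indeed close to the paper's Proposition~\ref{libblb}. However, there is a genuine gap at the heart of Step~1. From a differential inequality of the form $\tfrac{d}{dt}\mathcal E(t)+c\,\mathcal D(t)\le C$ you cannot conclude ``uniform-in-time control of $\mathcal E(t)$''; that requires a Gronwall closure, i.e.\ a bound of the form $\mathcal E\le C(\mathcal D+1)$. Such a bound is not available in general here: $\mathcal E$ contains the relative entropy $\int_{N_b}^n\log(y/N_b)\,dy$ (essentially $n\log n$), while $\mathcal D_1$ controls $\int n|\nabla\log(n/N_b)-\nabla(v-V_b)|^2\,dx$, and there is no direct functional inequality between them. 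This is precisely the difficulty the paper isolates and resolves by decomposing $n+p$ into ``lower'' and ``higher'' parts via ${\cal I}^M=(n+p)\chi_{\{n+p\ge M\}}$, then arguing dichotomously: when $\int_\Omega{\cal I}^M\,dx$ is large, Lemma~\ref{ulob} forces $\int\mathcal D$ to be large so $E$ decays; when $\int_\Omega{\cal I}^M\,dx$ is small, the new Lemma~\ref{lemefs} bounds $\int(n-p)^2\,dx$ by $\int\mathcal D_1\,dx$, which via Lemma~\ref{A5} finally gives $\int{\cal E}_1\,dx\le C(\int\mathcal D_1+\mathcal D_2\,dx+1)$ and closes the Gronwall loop (see the proof of Proposition~\ref{asulob}). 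You flag ``time-uniform low-order integrability'' as the obstacle but offer no mechanism to establish it; as written, your Step~1 asserts the conclusion rather than proving it.

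A second, related omission: the specific hard point for $b\neq 0$, $g\neq 0$ is not elliptic $W^{1,s}$-regularity for the Poisson problem (which is standard, cf.\ Lemma~\ref{eespro}), but the parabolic boundary term $\int_{\Gamma_N} bv\,(n-M)_+\,dS$ that appears after substituting $\phi_3=(n-M)_+$ into \eqref{ddewv}; this is superlinear in the unknowns and is absorbed only under the smallness assumption $\int_\Omega{\cal I}^M\,dx\le c_2$ (Lemma~\ref{lemefs}). Your proposal never articulates how this term is handled. Finally, a structural remark: the paper establishes the lower bounds first (Proposition~\ref{asubfb}), \emph{before} the entropy estimate, because $c_1\le n,p$ is used to make $\mathcal D_1$ coercive in $|\nabla\log(n/N_b)-\nabla(v-V_b)|$; your ordering (upper bound first, then lower via $1/n$) would leave Step~1 without the coercivity it needs. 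Your Alikakos argument for $1/n$ in Step~3 may be fine in spirit (the paper delegates this to \cite{KS2}), but it cannot come after the entropy step in the chain of dependencies.
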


We are interested in the asymptotic behavior of solutions
in the case that the applied voltage is periodic in time as an AC voltage.
We prove that if $\delta$ is sufficiently small, then
\er{ddeq} has a unique time-periodic solution
and any solution converges to it as $t \to \infty$.
This result is summarized in the following theorem. 

\begin{thm}\lb{tpsthm}
Suppose that $V_b$ and $g$ are periodic in $t$ with period $T_*>0$,
and that $\theta<\hat \theta$ 
holds for $\hat \theta$ being in Theorem \ref{uesthm}.
Then there exists a constant $\delta_0>0$ depending only on 
$c_0$, $C_0$, $\Omega$, $\Gamma_D$, $\rho$, $r$, $\ve$ and $c_R$
such that \eqref{ddeq} has a unique time-periodic solution $(n_*,p_*,v_*)$ if $\delta<\delta_0$.
Furthermore, 
any solution $(n,p,v)$ of \eqref{ddeq} converges to $(n_*,p_*,v_*)$
in $L^2(\Om) \times L^2(\Om) \times H^1(\Omega)$ 
exponentially fast as $t \to \infty$. Specifically,
\begin{equation}\lb{contps}
\limsup_{t \to \infty} e^{ct} (|(n-n_*)(t)|_2+|(p-p_*)(t)|_2+\|(v-v_*)(t)\|_1)<+\infty,
\end{equation}
where $c>0$ is a constant depending only on 
$c_0$, $C_0$, $\Omega$, $\Gamma_D$, $\rho$, $r$, $\ve$ and $c_R$.
\end{thm}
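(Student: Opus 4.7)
The plan is to reduce Theorem 2.2 to an exponential contraction estimate between any two solutions of \eqref{ddeq} sharing the same boundary data, and then to obtain the time-periodic solution as the limit of the iterated Poincar\'e map. The contraction is proved by a relative entropy method, for which the uniform bounds of Theorem 2.1 are essential.

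Let $(n_i, p_i, v_i)$, $i = 1, 2$, be two solutions of \eqref{ddeq} with identical boundary data. By Theorem 2.1 there exist $T_0 > 0$ and constants $0 < c_1 \leq C_1$, depending only on the admissible parameters, with $c_1 \leq n_i(t, \cdot), p_i(t, \cdot) \leq C_1$ for every $t \geq T_0$. For $t \geq T_0$ I would introduce the relative entropy
$$H(t) := \int_\Om \bigl[\Phi(n_1, n_2) + \Phi(p_1, p_2)\bigr] dx + \frac{\ve}{2}|\nabla(v_1 - v_2)|_2^2 + \frac{1}{2}\int_{\Ga_N} b(v_1 - v_2)^2 dS,$$
with $\Phi(a,b) := a\log(a/b) - a + b \geq 0$. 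After a regularization that is legitimized by the positive lower bound $c_1$, I would test the differences of \eqref{ddewn} and \eqref{ddewp} with $\log(n_1/n_2) - (v_1 - v_2) \in H^1_D(\Om)$ and $\log(p_1/p_2) + (v_1 - v_2) \in H^1_D(\Om)$ respectively, combining these with the identity obtained from \eqref{ddewv} tested by $v_1' - v_2'$. The drift--diffusion fluxes then reorganize into the manifestly nonnegative dissipation
$$D(t) := \int_\Om \bigl\{ n_2 |\nabla[\log(n_1/n_2) - (v_1 - v_2)]|^2 + p_2 |\nabla[\log(p_1/p_2) + (v_1 - v_2)]|^2 \bigr\} dx,$$
while the recombination integrals yield $\int_\Om [R(n_1, p_1) - R(n_2, p_2)] [\log(n_1 p_1) - \log(n_2 p_2)] dx \geq 0$, thanks to the monotonicity built into (H3). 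Using the $L^\infty$ and positivity bounds from Theorem 2.1, the Poincar\'e inequality on $H^1_D(\Om)$, and the elliptic estimate for $v_1 - v_2$ derived from the Poisson equation, I would lower-bound $D(t)$ by a positive multiple of $H(t)$, obtaining $H'(t) + cH(t) \leq 0$ on $[T_0, \infty)$ and hence exponential decay. A Csisz\'ar--Kullback inequality then converts this into the $L^2 \times L^2 \times H^1$ bound claimed in \eqref{contps}.

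Once the contraction is in hand, the time-periodic solution is constructed as follows. Fix any solution $(n, p, v)$ produced by the Gajewski--Gr\"oger theory. Since $V_b$ and $g$ are $T_*$-periodic, the shift $(n(\cdot + T_*), p(\cdot + T_*), v(\cdot + T_*))$ is also a solution of \eqref{ddeq}, so applying the contraction estimate to this pair shows that $\{(n(\cdot + k T_*), p(\cdot + k T_*))\}_{k \in \bbn}$ is a Cauchy sequence in $L^2(\Om) \times L^2(\Om)$ uniformly on bounded time intervals. The limit is a $T_*$-periodic pair $(n_*, p_*)$, and $v_*$ is recovered from \eqref{ddewv}. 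Uniqueness follows by applying the contraction to two periodic solutions: the corresponding entropy is both $T_*$-periodic and exponentially decaying, hence identically zero. Applying the contraction to $(n, p, v)$ against the constructed $(n_*, p_*, v_*)$ yields \eqref{contps}.

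The central obstacle should be the derivation of $H'(t) + cH(t) \leq 0$. Two technical points will need care: first, justifying the nonsmooth test functions under the weak regularity in \eqref{reg0}, which requires a truncation/regularization procedure that relies essentially on the strictly positive lower bounds from Theorem 2.1; and second, closing the coercivity $D(t) \geq cH(t)$ in the presence of cross terms generated by the Poisson coupling and the mixed Dirichlet--Neumann boundary condition. The smallness hypothesis $\delta < \delta_0$ enters precisely at this coercivity step, since the constants $c_1, C_1$ from Theorem 2.1 depend on $\delta$ and the remainder bilinear form must be absorbed into the main quadratic dissipation.
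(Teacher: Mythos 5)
Your overall plan matches the paper: relative entropy contraction plus a shift-by-$T_*$ construction of the periodic orbit. But there are three substantive gaps in the contraction step, and one of them is an outright false claim.

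\textbf{(1) The recombination term is not sign-definite.} You claim $\int_\Om [R(n_1,p_1)-R(n_2,p_2)][\log(n_1p_1)-\log(n_2p_2)]\,dx\ge 0$ by ``monotonicity built into (H3).'' This is false: $R$ depends on $n+p$ as well as $np$, so it is not monotone along level sets of $np$. For instance $n_1=10,\,p_1=1/2$ gives $n_1p_1=5$, $R=\nu\cdot 4/12.5$, while $n_2=p_2=2$ gives $n_2p_2=4$, $R=\nu\cdot 3/6$; here $R$ decreases while $\log(np)$ increases, so the product is negative. The paper decomposes the corresponding term $\mathcal K$ into a genuinely nonpositive piece $-\nu\frac{n_1p_1-n_2p_2}{n_1+p_1+2}\log\frac{n_1p_1}{n_2p_2}$ plus a remainder proportional to $n_2p_2-1$, and must estimate the remainder; it does not vanish or have a sign. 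Your argument also omits the term $\mathcal L=R(n_2,p_2)(\varphi-\log(1+\varphi)+\psi-\log(1+\psi))$ produced when the time derivative $(n_2\varphi)'\log(1+\varphi)$ is expanded and $n_2'$ is replaced using the equation; that term is likewise sign-indefinite.

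\textbf{(2) The cross term in the flux is not addressed.} Rewriting the flux difference as $n_1(\nabla\log(1+\varphi)-\nabla\eta)+\varphi(\nabla n_2-n_2\nabla v_2)$ and testing against $\nabla(\log(1+\varphi)-\eta)$ produces, besides the dissipation and the $\mathcal L$-type piece, the leftover $\mathcal M=\varphi(\nabla n_2-n_2\nabla v_2)\cdot\nabla\eta-\psi(\nabla p_2+p_2\nabla v_2)\cdot\nabla\eta$. Controlling $|\varphi\nabla\eta|_2$ is not a routine Poincar\'e/Sobolev exercise at the low regularity \eqref{reg0} with mixed boundary conditions. The paper isolates this as the key obstruction and proves the dedicated estimate of Lemma~\ref{mpees}, which tests the Poisson equation with $(\varphi^2+\psi^2)\eta$ and uses the $L^\infty$-bound on $\eta$ from Lemma~\ref{eespro}; your sketch provides nothing playing this role, so the Gronwall inequality you aim for would not close.

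\textbf{(3) Where $\delta$ enters.} You attribute the need for $\delta<\delta_0$ to the pointwise coercivity $D(t)\ge cH(t)$, but that step (with a harmless $|\nabla\eta|_2^2$ remainder absorbed via the Poisson equation) does not use smallness of $\delta$. The smallness is used because the prefactor $a(t)=|n_2p_2-1|_2^2+|\nabla n_2-n_2\nabla v_2|_2^2+|\nabla p_2+p_2\nabla v_2|_2^2$ multiplying the bad terms is not small pointwise; it is small only in time-average, $\int_s^t a\,d\tau\le C+C\delta(t-s)$, via the dissipation estimate from Section~\ref{ULBS} (Lemma~\ref{edelem}). The exponential decay then requires $\delta$ small enough so that $c-C\delta>0$ in the integrated Gronwall inequality. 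Without identifying this mechanism, the sufficiency of the smallness hypothesis cannot be justified.

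The construction via iterated shifts and passage to the limit is correct in outline and matches the paper, including using uniqueness to make the limit independent of the starting solution; the verification of conditions (i) and (iii) in Definition~\ref{DOS} also requires the auxiliary convergence of gradients and time derivatives (cf.\ Lemma~\ref{A4} and \eqref{ddgsineq} in the paper), which you should not take for granted but is readily supplied once the contraction is established.
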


\begin{rem} \rm
Theorem \ref{tpsthm} provides the global stability of asymptotic states which do not satisfy \er{noflow}, 
and therefore it is an extension of \cite{GG86} for the case $\theta \ll 1$.
The smallness assumption on $\theta$ holds if the area $S_{O}$ of the set $ \{ b \neq 0 \} $ is small,
because of the inequality $|b|_{q_{0},\Gamma_{N}} \leq |b|_{\infty,\Gamma_{N}} S_{O}^{1/q_{0}}$.
From a physical point of view, $S_{O}$ denotes the areas of interfaces between semiconductors and oxides.
We also remark that stationary solutions may not be unique for large $\delta$
(see \cite{R87,S89a,S89b}). 
\end{rem}


The main task in the proof of Theorem 2.3 is to establish the uniform-in-time estimate of $L^1$-norms. 
Then a difficulty arises from the condition $b \neq 0$ in \eqref{ddewv}. 
Specifically, when we rewrite the drift terms $n\nabla v$ and $p\nabla v$ in \eqref{ddewn} and \eqref{ddewp} 
by using \eqref{ddewv}, 
we have boundary terms having a strong nonlinearity. 
To handle the difficulty, we decompose $n$ and $p$ into the parts of the lower and higher values. 
The lower part is not issue at all. 
If the $L^1$-norm of the higher part is large, 
the dissipative effect is strong enough so that 
the rewrite mentioned above is not necessary. 
For the case that the $L^1$-norm of the higher part is small, we use a new technique 
to show that the nonlinear term with $b$ can be absorbed into the dissipative terms. 
It will be discussed in Lemma \ref{lemefs}. 

A main difficulty of the proof of Theorem 2.4 arises due to the low regularity of solutions.
It is not expected that the solutions have a better regularity than \eqref{reg0}
due to the mixed boundary condition \eqref{bdnd}. 
In such a case, it is not straightforward to handle some nonlinear terms
only by applying the well-known inequalities. 
To overcome this difficulty, we use the new estimate proved in Lemma \ref{mpees}.

This paper is organized as follows.
In Section~\ref{prelim}, we introduce basic inequalities and facts.
Section~\ref{ULBS} is devote to the derivation of 
uniform-in-time estimates in Theorem~\ref{uesthm}.
In Section~\ref{energy}, we estimate the difference of two solutions.
The estimate enables us to prove Theorem~\ref{tpsthm}.
The proof of Theorem~\ref{tpsthm} will be discussed in Section~\ref{TPS}.

\section{Preliminaries}\lb{prelim}
In this section, 
we introduce basic facts to be used in our arguments.
First we mention the time-global solvability of the problem \eqref{ddeq} 
supplemented with the initial condition $(n,p)(0,\cdot)=(n_{0},p_{0})$. 
It is proved by Gajewski and Gr\"oger~\cite[Theorem~1]{GG86}
in the case that the function $V_{b}$ is time-independent.
Even if $V_{b}$ is time-dependent, 
their proof works under the condition (H4) in Assumption~\ref{assp1}.
\begin{pro}[\!\!\cite{GG86}]\label{global_GG86}
Let $I=(0,\infty)$. 
For any $(n_{0},p_{0}) \in L^{\infty}(\Omega) \times L^{\infty}(\Omega)$ with $n_0,p_0 \ge 0$,
the problem \eqref{ddeq} has a unique solution $(n,p,v)$ satisfying
$(n,p)(t,\cdot ) \to (n_0,p_0)$ in $L^2(\Om)\times L^2(\Om)$ as $t \downarrow 0$.
\end{pro}

Next we discuss estimates of a solution $w$ of the boundary value problem 
\begin{equation}\label{eeq}
\left\{
\begin{aligned}
&\Delta w=h(x), && x \in \Om, \\
&w=W_b(x), && x \in \Gamma_D, \\
&\nabla w \cdot \bold{n}+\tilde b(x)w=\tilde g(x), && x \in \Gamma_N,
\end{aligned}
\right.
\end{equation}
where $h \in L^{6/5}(\Omega)$, $W_b \in H^1(\Omega)$, 
$\tilde b \in L^2 (\Gamma_N)$, $\tilde b \ge 0$ 
and $\tilde g \in L^{4/3}(\Gamma_N)$.
We say that $w \in H^1(\Omega)$ is a solution of \eqref{eeq} 
if $w-W_b \in H^1_D(\Omega)$ and
\begin{equation*}
\int_\Om \nabla w \cdot \nabla \phi dx+\int_{\Gamma_N} ({\tilde b}w-\tilde g) \phi dS
 =-\int_\Om h\phi dx
\end{equation*}
for all $\phi \in H^1_D(\Omega)$.
This is written as
\begin{equation}
\int_\Om \nabla (w-W_b) \cdot \nabla \phi dx+\int_{\Gamma_N} {\tilde b}(w-W_b) \phi dS
 =-\int_\Om \nabla W_b \cdot \nabla \phi dx -\int_\Om h\phi dx
  -\int_{\Gamma_N} ({\tilde b}W_b -{\tilde g}) \phi dS.
\label{weakeq}
\end{equation}
The Sobolev embedding theorem $H^1(\Omega) \hookrightarrow L^6(\Omega)$
and the inequality \eqref{tsineq} below imply that the mapping 
$\phi \mapsto -\int_\Om \nabla W_b \cdot \nabla \phi dx
-\int_\Om h\phi dx +\int_{\Gamma_N} ({\tilde b}W_b -{\tilde g}) \phi dS$
defines a bounded linear functional on $H^1_D(\Om)$.
Therefore a standard theory of elliptic partial differential equations
shows that \eqref{eeq} has a unique solution $w$ satisfying
\begin{equation}
\|w-W_b\|_1 \le C\{|h|_{6/5}+|\tilde g|_{4/3,\Gamma_{N}}
 +\|W_{b}\|_{1}(1+|\tilde b|_{2,\Gamma_{N}})\}
\label{ell-H1} 
\end{equation}
for some constant $C=C(\Omega,\Gamma_D)>0$.

\begin{lem}\label{eespro}
Let $1 \le \al<3/2$ and $1 \le \be<2$.
Then the solution $w$ of \eqref{eeq} satisfies
\begin{equation}
|\nabla w|_{\alpha} +|w|_{\beta,\Gamma_{N}} \le C\{|h|_{1}
 +|\tilde g|_{1,\Gamma_{N}} +\|W_{b}\|_{1}(1+|\tilde b|_{4/3,\Gamma_{N}})\},
\label{ell-W11Lp}
\end{equation}
where $C=C(\Omega,\Gamma_D,\alpha,\beta)>0$ is a constant.
Suppose further that $h \in L^q(\Omega)$, 
$W_b \in L^\infty(\Omega)$ and $\tilde g \in L^r(\Gamma_N)$
for some $q>3/2$ and $r>2$.
Then there exists a constant $\tilde C=\tilde C(\Omega,\Gamma_D,q,r)>0$
such that $w \in L^\infty(\Omega)$ and
\begin{equation}
|w|_\infty \le \tilde C (|h|_q+|\tilde g|_{r,\Gamma_{N}} +|W_b|_\infty).
\label{ell-Linf}
\end{equation}
\end{lem}

The following lemmas will also be utilized.

\begin{lem}\lb{Traces}
The following hold.
\begin{enumerate}[(i)]
\item
For $1 \le q \le 2$,
there exists a constant $C=C(\Omega,\Gamma_D,q)>0$ such that
for all $f \in H^1_D(\Omega)$,
\begin{equation}\label{tpineq}
|f|_{q,\Gamma_N} \le C|\nabla f|_q.
\end{equation}
\item
There exists a constant $C=C(\Omega)>0$ such that
for all $f \in H^1(\Omega)$,
\begin{equation}\label{tsineq}
|f|_{4,\Gamma_N} \le C\| f\|_1.
\end{equation}
\item
For $1 \le q<4$, 
there exist constants $C=C(\Omega,\Gamma_D,q)>0$ and $\al=\al(q)>0$ such that
for all $f \in H^1_D(\Omega)$ and $\mu>0$,
\begin{equation}\label{trineq}
|f|_{q,\Gamma_N} \le \mu |\nabla f|_2 +C\mu^{-\al}|f|_1.
\end{equation}
\end{enumerate}
\end{lem}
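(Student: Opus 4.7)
The plan is to prove the three parts in turn, all by standard trace, Poincar\'e, and Gagliardo-Nirenberg (GN) interpolation arguments.

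For part (i), I combine the trace theorem $|f|_{q,\partial\Omega} \le C\|f\|_{W^{1,q}(\Omega)}$ (valid for $1 \le q < \infty$ on Lipschitz domains) with the Poincar\'e inequality $|f|_q \le C|\nabla f|_q$ on $W^{1,q}_D(\Omega)$, which is available since $\Gamma_D$ has positive surface measure. Since $\Omega$ is bounded, $H^1_D(\Omega) \subset W^{1,q}_D(\Omega)$ for $1 \le q \le 2$. Restricting the boundary norm from $\partial\Omega$ to $\Gamma_N$ completes the argument. For part (ii), I chain the Lipschitz trace $H^1(\Omega) \hookrightarrow H^{1/2}(\partial\Omega)$ with the Sobolev embedding $H^{1/2}(\partial\Omega) \hookrightarrow L^4(\partial\Omega)$ on the two-dimensional surface $\partial\Omega$, which comes from the general $H^s \hookrightarrow L^{2/(1-s)}$ in dimension two at $s=1/2$.

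The substantive content is (iii). The strategy is to first prove a GN-type trace inequality
\[
|f|_{q,\Gamma_N} \le C|\nabla f|_2^{\sigma}|f|_1^{1-\sigma}, \qquad f \in H^1_D(\Omega),
\]
for some $\sigma = \sigma(q) \in (0,1)$, and then apply the weighted Young inequality $a^\sigma b^{1-\sigma} \le \mu a + C\mu^{-\sigma/(1-\sigma)}b$ with $a = |\nabla f|_2$ and $b = |f|_1$ to conclude (iii) with $\alpha = \sigma/(1-\sigma)$. To derive the trace-GN inequality, I fix a $C^1$ vector field $\mathbf{G}$ on $\overline{\Omega}$ satisfying $\mathbf{G}\cdot\mathbf{n} = 1$ on $\Gamma_N$ and $\mathbf{G}\cdot\mathbf{n} = 0$ on $\Gamma_D$ (constructed by extending the outer normal on $\Gamma_N$ via a cutoff), and apply the divergence theorem to $|f|^q \mathbf{G}$:
\[
|f|_{q,\Gamma_N}^q = \int_\Omega \bigl( q|f|^{q-2}f\, \nabla f \cdot \mathbf{G} + |f|^q \operatorname{div}\mathbf{G} \bigr) dx \le C \bigl( |f|_{2(q-1)}^{q-1}|\nabla f|_2 + |f|_q^q \bigr)
\]
by H\"older. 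The interior norms $|f|_{2(q-1)}$ and $|f|_q$ are then controlled by the standard interior Gagliardo-Nirenberg inequality $|f|_r \le C|\nabla f|_2^{\theta_r}|f|_1^{1-\theta_r}$ (valid in three dimensions for $1 \le r \le 6$), yielding the claimed trace-GN estimate.

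The main obstacle is twofold. First, constructing $\mathbf{G}$ for a general Lipschitz domain with mixed boundary requires a partition-of-unity / local flattening argument near $\overline{\Gamma_D}\cap \overline{\Gamma_N}$; this is standard but not entirely trivial. Second, the H\"older step above naturally requires $2(q-1) \ge 1$, i.e.\ $q \ge 3/2$. The remaining range $1 \le q < 3/2$ is handled by noting that $\Gamma_N$ is bounded, so for any $q' \in (q,4)$ one has $|f|_{q,\Gamma_N} \le C|f|_{q',\Gamma_N}$ by H\"older on $\Gamma_N$; it therefore suffices to establish (iii) for $q$ close to $4$, which falls inside the previous case.
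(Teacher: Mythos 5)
Your proof is correct, but parts (ii) and (iii) take genuinely different routes from the paper's. For (ii) you go through the fractional trace space $H^{1/2}(\partial\Omega)$ and its Sobolev embedding into $L^4(\partial\Omega)$; the paper instead applies the $W^{1,1}$-trace bound to $|f|^4$ and uses H\"older together with $H^1(\Omega)\hookrightarrow L^6(\Omega)$, via
$|f|_{4,\Gamma_N}=\bigl||f|^4\bigr|_{1,\Gamma_N}^{1/4}\le C\bigl(|\nabla f|_2^{1/4}|f|_6^{3/4}+|f|_6\bigr)\le C\|f\|_1$,
which stays entirely within integer-order Sobolev spaces on the Lipschitz boundary. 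For (iii) the paper exploits the same device: it applies \eqref{tpineq} with exponent $1$ to $|f|^q$, giving
$|f|_{q,\Gamma_N}=\bigl||f|^q\bigr|_{1,\Gamma_N}^{1/q}\le C\bigl||\nabla f|\,|f|^{q-1}\bigr|_1^{1/q}$,
then H\"older, interpolation of $|f|_{2(q-1)}$ between $L^6$ and $L^1$, the bound $|f|_6\le C|\nabla f|_2$, and Young, ending up with a single product $|\nabla f|_2^{\theta_1}|f|_6^{\theta_2}|f|_1^{\theta_3}$. Your divergence-theorem derivation of the trace Gagliardo--Nirenberg estimate via a vector field $\mathbf G$ is a valid alternative, but it brings in the extra construction of $\mathbf G$ and the additional interior term $|f|_q^q$ from $\operatorname{div}\mathbf G$; the paper's route, piggy-backing on part (i), avoids both. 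Note also that since $f=0$ on $\Gamma_D$ for $f\in H^1_D(\Omega)$, you do not actually need $\mathbf G\cdot\mathbf n=0$ on $\Gamma_D$, only $\mathbf G\cdot\mathbf n\ge c>0$ on $\Gamma_N$, which substantially simplifies the construction you flag as nontrivial near $\overline{\Gamma_D}\cap\overline{\Gamma_N}$. Both proofs perform the same reduction to $q\ge 3/2$ and the same Young-inequality finish, so the difference lies solely in how the trace interpolation estimate is produced.
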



\begin{lem}\lb{A5}
Let $d \ge 1$.
Then there is a constant $C=C(d)>0$ such that
\begin{equation}\label{efeineq}
\int_A^a \log \frac{y}{A} dy
 +\int_B^b \log \frac{y}{B} dy
  \le C\left\{ (a-b)^2+\frac{ab-1}{a+b+2}\log (ab) +1 \right\}
\end{equation}
for all $a,b>0$ and $1/d \le A,B \le d$.
\end{lem}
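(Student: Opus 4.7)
The plan is to compute the left-hand side of \eqref{efeineq} explicitly, reduce the claim to a pointwise inequality in $(a,b)$ alone, and then verify the latter by a short case analysis on the relative sizes of $a$ and $b$.

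First I would perform the direct integration
\begin{equation*}
\phi(a,A):=\int_A^a\log\frac{y}{A}\,dy=a\log\frac{a}{A}-(a-A)\ge 0,
\end{equation*}
and then, splitting on whether $a\ge 1$ and using $A,B\in[1/d,d]$ together with the elementary bound $s|\log s|\le 1/e$ on $(0,1]$, derive a constant $C_1=C_1(d)$ such that
\begin{equation*}
\phi(a,A)+\phi(b,B)\le a(\log a)_+ + b(\log b)_+ + C_1(a+b+1).
\end{equation*}
This reduces the claim to bounding $E(a,b):=a(\log a)_+ + b(\log b)_+ + (a+b)$ by a constant (depending only on $d$) times the right-hand side of \eqref{efeineq}.

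Next I would fix a large threshold $K=K(d)$, to be chosen, and argue by cases, assuming without loss of generality $a\le b$. If $a+b\le K$, then $E(a,b)$ is bounded by a constant depending only on $K$, and the $+1$ on the right-hand side handles it. Otherwise $b>K/2$, and I would further split on the ratio of $a$ to $b$. In the case $a\le b/4$, one has $(a-b)^2\ge 9b^2/16$, whereas $E(a,b)\le 2b\log b+2b$, so for $K$ large enough $(a-b)^2$ alone dominates $E(a,b)$ because $b\log b\ll b^2$ as $b\to\infty$.

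The main case — and the one I expect to be the real obstacle — is $b/4<a\le b$ with $b>K/2$ large, where the Shockley--Read--Hall dissipation must do the work. Here $ab\ge b^2/4\ge 2$ for $K\ge 4$, so $ab-1\ge ab/2$; moreover $a+b+2\le 3b$ and $\log(ab)=\log a+\log b\ge \log b$. Chaining these estimates would give
\begin{equation*}
\frac{ab-1}{a+b+2}\log(ab)\ge \frac{ab/2}{3b}\log b=\frac{a\log b}{6}\ge \frac{b\log b}{24},
\end{equation*}
which dominates $E(a,b)\le 2b\log b+2b$ provided $K$ is chosen so large that $\log(K/2)$ exceeds an absolute constant, thereby absorbing the linear term $2b$ into the $\tfrac{1}{24}b\log b$ bound. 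Combining the three cases yields \eqref{efeineq} with $C=C(d)$. The subtle point is precisely here: the quotient $\tfrac{ab-1}{a+b+2}$ scales like $\min(a,b)$ rather than merely like a constant — it is this SRH structure, and not the quadratic distance $(a-b)^2$, that is responsible for controlling the $b\log b$ part of the entropy when $a,b$ are comparable and large.
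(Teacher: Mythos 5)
Your proof is correct, and it takes a genuinely different route from the paper's. Both arguments begin identically by integrating and using $1/d\le A,B\le d$ to reduce the claim to bounding $a(\log a)_+ + b(\log b)_+ + O(a+b+1)$ by a multiple of the right-hand side, and both exploit the same structural dichotomy: when $a$ and $b$ are far apart, $(a-b)^2$ dominates the at-most-$b\log b$ growth, and when they are comparable and large, the Shockley--Read--Hall quotient $\tfrac{ab-1}{a+b+2}$ scales like $\min(a,b)$, producing a term of order $b\log b$ that absorbs the entropy. Where you diverge is in how you formalize this. The paper argues by contradiction: assuming the ratio $F(a_j,b_j)$ of left to right sides tends to $\infty$, it passes to a subsequence with $a_j+b_j\to\infty$ and $a_j/b_j\to\ell\in[0,\infty]$, then derives a contradiction in each of the limit regimes $\ell\neq 1$ and $\ell=1$ (for $\ell=1$ it in fact shows $F\to 4$). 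You instead give a direct, quantitative case split with an explicit threshold $K=K(d)$ and the dichotomy $a\le b/4$ versus $b/4<a\le b$, which exactly parallels the paper's $\ell\neq 1$ versus $\ell=1$. Your approach has the advantage of yielding explicit (if conservative) constants and avoiding any compactness or subsequence extraction; the paper's is notationally leaner. Two small points to tighten in a write-up: in the comparable-case chain you should note explicitly that $a>b/4>K/8\ge 1$ (for $K\ge 8$) so that $\log a\ge 0$ and hence $\log(ab)\ge\log b$; and the verification $ab\ge 2$ needs $K$ slightly larger than $4$ (any $K\ge 6$ works), though this is moot since $K$ is ultimately taken large enough that $\log(K/2)$ exceeds an absolute constant.
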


\begin{lem}\label{A3}
Let $\sigma>0$.
For $M>e$ and $s \in \bbr$, we set 
\begin{equation*}
h(M):=\frac{1}{2} \log \log M,
\qquad
H_M(s):=\left\{
\begin{aligned}
&-h(M) && (s<-h(M)),
\\
&s && (|s|\leq h(M)),
\\
&h(M) && (s>h(M)).
\end{aligned}
\right.
\end{equation*}
Then there is a constant $M_*=M_*(\sigma)>e$ such that
\begin{equation}
(a+b) \chi_{\{a+b \geq M \}} \leq 
\frac{2}{h(M)}\left\{ (a-b) H_M \left(\log \frac{a_{\sigma}}{b_{\sigma}}\right) 
 +\frac{ab-1}{a+b+2} \log (ab) \right\}
\label{abhineq}
\end{equation}
for all $a,b>0$ and $M \ge M_*$.
\end{lem}

\begin{lem}\lb{A4}
Let $E$ be a measurable set in a Euclidean space.
Assume that sequences 
$\{f_k\} \subset L^2(E)$ and $\{g_k\} \subset L^2(E)$ 
are convergent in $L^2(E)$ 
and that $|f_k| \le C$ in $E$ 
for some constant $C>0$ independent of $k$.
Then $\{ f_k g_k\}$ is convergent in $L^2(E)$.
\end{lem}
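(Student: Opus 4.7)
The plan is to reduce the convergence of the product to standard convergence results by splitting the difference $f_kg_k - fg$ (where $f,g$ denote the $L^2$-limits) in the usual way:
\[
f_k g_k - fg = f_k(g_k-g) + (f_k-f)g.
\]
For the first piece, the uniform bound $|f_k|\le C$ gives immediately
$\|f_k(g_k-g)\|_{L^2(E)} \le C\|g_k-g\|_{L^2(E)} \to 0$,
so it contributes no difficulty.

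The main work lies in the second piece, $(f_k-f)g$. First I would upgrade the pointwise bound to the limit: since $f_k\to f$ in $L^2(E)$, a subsequence converges almost everywhere, and passing to the limit in $|f_k|\le C$ along that subsequence yields $|f|\le C$ almost everywhere in $E$. Consequently $|f_k-f|^2 g^2 \le 4C^2 g^2$, and the right-hand side is integrable because $g\in L^2(E)$.

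Next, I would argue that $(f_k-f)^2 g^2 \to 0$ in measure. Convergence in $L^2$ implies convergence in measure, hence so does convergence of the squared differences, and multiplication by the fixed function $g^2$ preserves convergence in measure. Combined with the $L^1$-domination above, the dominated convergence theorem (applied along an arbitrary subsequence to extract a further a.e.-convergent sub-subsequence, so as to bypass the fact that we only have convergence in measure) gives
\[
\|(f_k-f)g\|_{L^2(E)}^2 = \int_E (f_k-f)^2 g^2\,dx \to 0.
\]
Putting the two estimates together yields $f_k g_k \to fg$ in $L^2(E)$.

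The only subtle point, and hence the ``main obstacle,'' is that we do not have pointwise almost-everywhere convergence of $f_k$ to $f$ a priori, so dominated convergence cannot be invoked directly on the whole sequence. The subsequence trick (every subsequence has a further subsequence converging to the same limit, which forces the whole sequence to converge) is standard and handles this cleanly. Everything else is routine manipulation of $L^2$-norms together with the uniform bound on $\{f_k\}$.
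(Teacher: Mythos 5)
Your proof is correct and starts from the same decomposition as the paper's
\[
f_k g_k - fg = f_k(g_k-g) + (f_k-f)g,
\]
handling the first term identically via the uniform bound $|f_k|\le C$. The two arguments diverge on the second term. The paper avoids dominated convergence altogether: it bounds
\[
\int_E (f_k-f)^2 g^2\,dx \;\le\; M^2\int_E (f_k-f)^2\,dx + 4C^2\int_{\{|g|\ge M\}} g^2\,dx,
\]
takes $\limsup_{k\to\infty}$ to kill the first term for each fixed truncation level $M$, and then lets $M\to\infty$ using $g\in L^2(E)$. This is a purely quantitative estimate with no appeal to pointwise convergence. You instead observe $|f|\le C$ (via an a.e.\ convergent subsequence), dominate $(f_k-f)^2g^2$ by $4C^2g^2\in L^1(E)$, and invoke dominated convergence along a.e.\ convergent sub-subsequences, using the ``every subsequence has a convergent sub-subsequence'' principle to conclude for the whole sequence. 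Both are sound; the paper's truncation argument is slightly more self-contained and avoids the subsequence bookkeeping, while yours is the more familiar measure-theoretic route and makes the role of uniform integrability of $g^2$ explicit. One small remark: your intermediate claim that multiplication by $g^2$ preserves convergence in measure is not needed and is slightly delicate to justify in general — it is cleaner to simply pass to an a.e.\ convergent subsequence of $f_k$ and apply dominated convergence directly to $(f_k-f)^2g^2$.
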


We will give the proofs of Lemmas \ref{eespro}--\ref{A4} in Appendix.

\section{Upper and lower bounds of solutions}\lb{ULBS}

Throughout this section, 
we assume that $(n,p,v)$ is a solution of  \eqref{ddeq}. 
The goal of this section is to prove Theorem~\ref{uesthm}.

We set 
\begin{gather*}
{\cal E}_1:=\int_{N_b}^{n} \log \frac{y}{N_b} dy
 +\int_{P_b}^{p} \log \frac{y}{P_b} dy,
\quad {\cal E}_2:=\frac{\ve}{2} |\nabla (v-V_b)|^2, 
\quad {\cal E}_3:=\frac{1}{2}b(v-V_b)^2.
\end{gather*}
Note that these are all nonnegative.
The upper and lower bounds of $n$ and $p$ follow from the following propositions
which will be proved in Subsections~\ref{S4.1}--\ref{S4.3}.

\begin{pro}\label{asulob}
There exist constants $\theta_{0}>0$ and $C>0$ depending only on 
$\max \{ \de,1\},c_0,C_0,\Omega$,
$\Gamma_D,\rho,r,\varepsilon$ and $c_R$
such that if $\theta<\theta_{0}$, then
\begin{equation*}
\limsup_{t \to \infty}\left\{ \int_\Om ({\cal E}_1+{\cal E}_2) dx 
 +\int_{\Gamma_N} {\cal E}_3 dS\right\} \le C.
\end{equation*}
\end{pro}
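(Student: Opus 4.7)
The plan is to derive a Gronwall-type differential inequality
\[
E'(t) + \kappa\, E(t) \leq K
\]
for $E(t) := \int_{\Om}(\mathcal{E}_1 + \mathcal{E}_2)\,dx + \int_{\Gamma_N}\mathcal{E}_3\,dS$, with positive constants $\kappa, K$ depending only on the data listed in the statement, and then to conclude $\limsup_{t\to\infty} E(t) \leq K/\kappa$ by Gronwall. Throughout I restrict to $t \geq t_0$, so that the lower bound \eqref{nplb} is available.

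First, I would derive an entropy--energy identity by testing \eqref{ddewn} with $\phi_1 := \log(n/N_b) - (v-V_b)$, \eqref{ddewp} with $\phi_2 := \log(p/P_b) + (v-V_b)$ (both lie in $H^1_D(\Om)$ since $n = N_b$, $p = P_b$, $v = V_b$ on $\Gamma_D$), and the time-differentiated \eqref{ddewv} against $v - V_b$. Setting $F_n := \nabla n - n\nabla v$, $F_p := \nabla p + p\nabla v$, $\mu_n := \nabla\log N_b - \nabla V_b$, and $\mu_p := \nabla\log P_b + \nabla V_b$, one has $\nabla\phi_1 = F_n/n - \mu_n$ and $\nabla\phi_2 = F_p/p - \mu_p$, and a direct computation yields
\[
E'(t) + D_1(t) + D_2(t) = S(t),
\]
with $D_1 := \int_\Om(|F_n|^2/n + |F_p|^2/p)\,dx$, $D_2 := \int_\Om R(n,p)\log(np)\,dx \geq 0$, and $S(t)$ collecting $\int_\Om (F_n\cdot\mu_n + F_p\cdot\mu_p)\,dx - \int_\Om R\log(N_bP_b)\,dx - \ve\int_\Om \nabla V_b'\cdot\nabla(v-V_b)\,dx - \int_{\Gamma_N} bV_b'(v-V_b)\,dS + \int_{\Gamma_N} g'(v-V_b)\,dS$. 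Using (H5)--(H8), Cauchy--Schwarz, the trace inequality \eqref{tpineq}, and the Poincaré inequality on $H^1_D(\Om)$, $S(t)$ can be absorbed at the cost of a fraction of $D_1$ and of $\int_\Om \mathcal{E}_2\,dx + \int_{\Gamma_N}\mathcal{E}_3\,dS$, leaving residuals of the form $|\mu_n|_\infty^2\int_\Om n\,dx + |\mu_p|_\infty^2\int_\Om p\,dx + C$; the $L^1$-masses are controlled by $C + C\int_\Om \mathcal{E}_1\,dx$ via the elementary bound $n \leq eN_b + \int_{N_b}^n \log(y/N_b)\,dy$ (and analogously for $p$).

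Second, I would establish the coercive closure $E(t) \leq C(D_1 + D_2) + C$. Since (H5)--(H6) yield $c_0 \leq N_b, P_b \leq C_0$, taking $d := \max(C_0, c_0^{-1})$ Lemma~\ref{A5} applies pointwise and gives
\[
\mathcal{E}_1 \leq C\!\left[(n-p)^2 + \frac{(np-1)\log(np)}{n+p+2} + 1\right],
\]
whence $\int_\Om\mathcal{E}_1\,dx \leq C\int_\Om(n-p)^2\,dx + (C/\nu)D_2 + C$. On the electrostatic side, testing \eqref{ddewv} against $v - V_b \in H^1_D(\Om)$ and applying Cauchy--Schwarz with Poincaré yields $\int_\Om\mathcal{E}_2\,dx + \int_{\Gamma_N}\mathcal{E}_3\,dS \leq C\int_\Om(n-p)^2\,dx + C$. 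The coercive closure thus reduces to bounding $\int_\Om(n-p)^2\,dx$ by $D_1$ plus a small fraction of $E$ plus a constant.

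This last bound is the main obstacle. Testing \eqref{ddewv} with $\phi_3 := (n-N_b) - (p-P_b) \in H^1_D(\Om)$ and using the identity $\nabla(n-p) = F_n - F_p + (n+p)\nabla v$, one rearranges to
\[
\int_\Om(n-p)^2\,dx + \ve\int_\Om(n+p)|\nabla v|^2\,dx = -\ve\int_\Om \nabla v\cdot(F_n - F_p)\,dx - \int_{\Gamma_N}(bv - g)\phi_3\,dS + \text{(l.o.t.)}.
\]
The volume drift term is handled by $|\ve\int_\Om \nabla v\cdot F_n\,dx| \leq (\ve/4)\int_\Om n|\nabla v|^2\,dx + \ve D_1$, half of which is absorbed back on the left. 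The genuine difficulty is the boundary term $\int_{\Gamma_N} bv\,\phi_3\,dS$: it is nonlinear in $v$ and in $n-p$, and no standard trace inequality furnishes a small enough constant to absorb it. This is exactly the Robin-induced obstruction flagged after Theorem~\ref{tpsthm}. To overcome it, I would use the authors' low/high decomposition $n = n_{\leq M} + n_{>M}$ and $p = p_{\leq M} + p_{>M}$ of the forthcoming Lemma~\ref{lemefs}: when $|n_{>M}|_1 + |p_{>M}|_1$ is large, the dissipation $D_1$ by itself dominates the Robin contribution, and when it is small a dedicated Robin-type estimate absorbs the boundary term into the dissipative terms. Combining this with the previous two steps produces $E'(t) + \kappa E(t) \leq K$, and Gronwall yields the claim.
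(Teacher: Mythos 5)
You have identified all the main ingredients the paper uses — the entropy--dissipation inequality of Lemma~\ref{edelem}, the coercivity gap that forces you to control $\int_\Om(n-p)^2\,dx$, the Robin-boundary obstruction in doing so, the elementary estimate of Lemma~\ref{A5}, and the low/high decomposition of Lemma~\ref{lemefs} — and your first two steps closely mirror Lemmas~\ref{edelem} and \ref{ulob}. However, your closing sentence ``Combining this with the previous two steps produces $E'(t)+\kappa E(t)\le K$'' waves away a step that the paper handles with a genuinely nontrivial continuation argument, and as written the scheme does not close. The coercive bound $E\le C(D_1+D_2)+C$ that converts the dissipation inequality into a Gronwall inequality is available \emph{only} under the smallness condition $\int_\Om\mathcal{I}^{M}\,dx\le c_2$ of Lemma~\ref{lemefs}. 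In the complementary regime the paper does not absorb the Robin term at all; instead, Lemma~\ref{ulob} and the choice of $M_3$ force the dissipation itself to be large ($\int_\Om\mathcal{D}_1+\mathcal{D}_2\,dx\ge 5C_2\max\{\de,1\}$), whence Lemma~\ref{edelem} gives only the linear decay $E'\le -\tfrac14 C_2\max\{\de,1\}$, not an inequality of the form $E'+\kappa E\le K$. Nothing in your proposal prevents the trajectory from entering this ``large-mass'' regime at a time when $E$ is already huge, in which case the estimate $E'\le -\mathrm{const}$ alone gives no usable bound.

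The paper repairs this by a bootstrap: it first locates $T_0$ at which $\int_\Om\mathcal{D}_1+\mathcal{D}_2\,dx$ is small (from the time-averaged form of \eqref{emineq}), which via Lemma~\ref{ulob} forces $\int_\Om\mathcal{I}^{M_3}dx$ small and hence $E(T_0)\le C_4$; it then runs a continuation argument on the set $\mathcal{T}$ where $E'+\tilde c E\le\tilde C$ holds, and on any interval where the smallness fails it combines the linear decay with the a priori bound $E\le C_4+\tilde C/\tilde c$ (obtained by Gronwall up to the last good time) and the carefully tuned $\tilde c\le C_2\max\{\de,1\}/(4C_4)$ to verify $E'+\tilde c E\le -\tfrac14 C_2\max\{\de,1\}+\tilde c C_4+\tilde C\le \tilde C$ there as well. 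This interlocking choice of $T_0$, $M_3$, $\tilde c$ and the continuation set is the actual heart of the proof; without it your dichotomy remains two disconnected observations rather than a single differential inequality amenable to Gronwall.
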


\begin{pro}\label{libblb}
Suppose that
\begin{equation*}
L:=\limsup_{t \to \infty} (|n(t)|_1 +|p(t)|_1 +\|v(t)\|_1)<\infty.
\end{equation*}
Then there exists a constant 
$C=C(L,c_0,C_0,\Omega,\Gamma_D,r,\varepsilon,c_R)>0$ such that
\begin{equation*}
\limsup_{t \to \infty} (|n(t)|_\infty +|p(t)|_\infty) \le C.
\end{equation*}
\end{pro}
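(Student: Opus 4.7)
My plan is to derive the $L^\infty$ bound by a Moser--Alikakos-type iteration on the truncations $(n-K)_+$ and $(p-K)_+$, exploiting the algebraic identity --- already signalled in the paper's outline --- that rewriting the drift via the Poisson equation \eqref{ddewv} produces a super-linear dissipative sink.

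I would fix $K_0 \ge \max\{|N_b|_\infty, |P_b|_\infty\}$ so that for any $K \ge K_0$ and any $q \ge 2$ the functions $(n-K)_+^{q-1}$ and $(p-K)_+^{q-1}$ lie in $H^1_D(\Omega)$ and are admissible as test functions. Testing \eqref{ddewn} by $(n-K)_+^{q-1}$ produces the time derivative $\tfrac{1}{q}\tfrac{d}{dt}|(n-K)_+|_q^q$ and the gradient dissipation $\tfrac{4(q-1)}{q^2}|\nabla (n-K)_+^{q/2}|_2^2$. The drift term $-\int n\nabla v\cdot \nabla (n-K)_+^{q-1}\,dx$ is then expanded by writing $n=(n-K)_++K$ on $\{n>K\}$, integrating by parts, and substituting $\varepsilon\Delta v=n-p-D$ together with the Robin condition on $\Gamma_N$. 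Splitting $n-p-D=(n-K)_+ + (K-p-D)$ on $\{n>K\}$ extracts the favourable super-linear contributions
\begin{equation*}
\frac{q-1}{q\varepsilon}\int_\Omega (n-K)_+^{q+1}\,dx + \frac{K}{\varepsilon}\int_\Omega (n-K)_+^q\,dx,
\end{equation*}
which I would move to the left-hand side as dissipations, together with lower-order errors proportional to $(p+D-K)$ and with boundary contributions on $\Gamma_N$ coming from $g$ and $bv$.

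The analogous computation on \eqref{ddewp} with test function $(p-K)_+^{q-1}$ yields the symmetric identity. Summing the two, I would absorb the cross terms $\int (n-K)_+^q\,p\,dx$ and $\int (p-K)_+^q\,n\,dx$ into the super-linear dissipations by decomposing $p=(p-K)_++\min(p,K)$ (and symmetrically for $n$) and applying Young's inequality; the $\min(p,K)$ pieces are bounded by $K$. The $\Gamma_N$-boundary terms would be treated by H\"older together with Lemma~\ref{Traces}: the $g$-pieces via $|g|_{r,\Gamma_N}\le C_0$ and the trace embedding \eqref{tsineq}, and the more dangerous piece $\int_{\Gamma_N}(n-K)_+^q\,bv\,dS$ via $|v|_{4,\Gamma_N}\le C\|v\|_1\le CL$ (from \eqref{tsineq} and the hypothesis) combined with the trace interpolation \eqref{trineq} applied to $(n-K)_+^{q/2}$, so that a small portion of $|\nabla(n-K)_+^{q/2}|_2^2$ absorbs it. This would yield a differential inequality
\begin{equation*}
\frac{d}{dt}\bigl(|(n-K)_+|_q^q+|(p-K)_+|_q^q\bigr) + c_q\bigl(|(n-K)_+|_q^q+|(p-K)_+|_q^q\bigr) \le A_q,
\end{equation*}
whence Gronwall gives the uniform-in-time estimate $\limsup_{t\to\infty}\bigl(|(n-K)_+|_q^q+|(p-K)_+|_q^q\bigr)\le A_q/c_q$.

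Starting from the $L^1$ bound $L$ in the hypothesis and applying the Sobolev inequality $|u|_6\lesssim \|u\|_1$ to $u=(n-K)_+^{q/2}$ to close a recursion $q\mapsto 3q$ in Alikakos fashion, iterating yields the asserted bound $\limsup_{t\to\infty}(|n(t)|_\infty+|p(t)|_\infty)\le C$. The main obstacle I anticipate is ensuring that the $q$-dependence of the absorption constants coming from the $\Gamma_N$-boundary term --- in particular the interpolation exponent $\alpha$ in \eqref{trineq} and the factor $\mu$ needed to absorb into the gradient dissipation --- remains benign enough that the resulting recursion on $L^{q_k}$-norms converges as $q_k\to\infty$ to a finite $L^\infty$ bound; this is the point where the strong boundary nonlinearity induced by $b\neq 0$ genuinely complicates what would otherwise be a textbook Moser iteration.
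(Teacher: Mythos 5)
Your overall strategy --- test with truncation powers, rewrite the drift via the Poisson equation, iterate to $L^\infty$ --- matches the paper's plan, and your choice of test functions and the identity $n\nabla(n-K)_+^{q-1}=\nabla F(n)$ with $F(n)=\tfrac{q-1}{q}(n-K)_+^{q}+K(n-K)_+^{q-1}$ are exactly what the paper uses. But the ``super-linear dissipative sink'' you describe is not a net dissipation. After substituting $\phi_3=F(n)$ into \eqref{ddewv} and summing the $n$- and $p$-equations, the quantity you keep on the left is $\tfrac{1}{\varepsilon}\int_\Omega(n-p)(F(n)-F(p))\,dx$, which is merely \emph{nonnegative} because $F$ is nondecreasing; the Young's inequality you invoke is exactly this cancellation and is tight ($ab^q+ba^q\le a^{q+1}+b^{q+1}$ with equality at $a=b$). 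You can drop this term, but it cannot absorb the boundary contributions, the $D$-terms, or the recombination term. The genuine coercivity is the diffusion gradient term $\tfrac{4(q-1)}{q^2}|\nabla(n-K)_+^{q/2}|_2^2$; the paper converts that into a decay rate via the Poincar\'e inequality. If you present the argument as though a new $\int(n-K)_+^{q+1}$-dissipation is created, a referee will object.

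Second, the iteration scheme is genuinely different. The paper first derives $J_{2\gamma}'+K_\gamma\le C(\|v\|_1+1)^{\beta}\gamma^{\beta}(J_\gamma^2+1)$ for all $\gamma$, applies it only with $\gamma=1$ (plus Poincar\'e and Gronwall) to get $\limsup_{t\to\infty}J_2\le C$, and then runs a \emph{local-in-time} Moser iteration with a cutoff in $t$ and Moser's interpolation lemma \cite{M64} to pass from $\int_{t-1}^{t+1}\int_\Omega n^2+p^2$ directly to $\sup_{[t,t+1]\times\Omega}(n+p)$. Your proposal is a global Alikakos-type recursion on $\limsup_{t\to\infty}|(n-K)_+|_q$. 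The recursion $B_{2\gamma}=C\gamma^{\beta}(B_\gamma^2+1)$ obtained from Poincar\'e and Gronwall does satisfy $\sup_\gamma B_\gamma^{1/\gamma}<\infty$, so the constants are not the real obstacle you single out. The issue is uniformity in $q$: for each $q$ Gronwall yields $|(n-K)_+(t)|_q^q\le 2B_q$ only for $t\ge T_q$, where $T_q$ depends on the size of $|(n-K)_+(T_{q/2})|_q$, and one must check these thresholds do not escape to infinity as $q\to\infty$ before one may conclude anything about $\limsup_t|(n-K)_+|_\infty$. The paper's local-in-time Moser iteration avoids this entirely by producing a $\sup_{[t,t+1]\times\Omega}$-bound in one pass from the $L^2$ estimate. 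If you take your route, this uniformity, not the trace interpolation exponent $\alpha$, is the point that needs a careful argument.
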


\begin{pro}\label{asubfb}
Suppose that
\begin{equation*}
\tilde L:=\limsup_{t \to \infty} \|v(t)\|_1 <\infty.
\end{equation*}
Then there exists a constant 
$c=c(\tilde L,c_0,C_0,\Omega,\Gamma_D,r,\varepsilon,c_R)>0$ such that
\begin{equation*}
\liminf_{t \to \infty}\inf_\Omega n(t,\cdot) \ge c, \qquad
\liminf_{t \to \infty}\inf_\Omega p(t,\cdot) \ge c.
\end{equation*}
\end{pro}


For a moment, we assume that 
Propositions~\ref{asulob}--\ref{asubfb} hold,
and then complete the proof of Theorem~\ref{uesthm}.

\begin{proof}[Proof of Theorem~\ref{uesthm}]
It is elementary to show that ${\cal E}_1 \ge (n-eN_b)_+ +(p-eP_b)_+$.
From this and the Poincar\'{e} inequality, 
we have
\begin{equation*}
|(n-eN_b)_+|_1 +|(p-eP_b)_+|_1 +\|v-V_b\|_1
\le \int_\Om {\cal E}_1dx +C\left( \int_\Om {\cal E}_2dx \right)^{1/2},
\end{equation*}
where $C=C(\Om,\Gamma_D,\ve)>0$ is a constant.
Combining this with Proposition~\ref{asulob} implies that
\begin{equation*}
\limsup_{t \to \infty} \left( |n|_1 +|p|_1 +\|v\|_1 \right) \le C
\end{equation*}
for some constant $C=C(\max \{ \de,1\},c_0,C_0,\Omega,\Gamma_D,\rho,r,\varepsilon,c_R)>0$.
This together with Propositions~\ref{libblb} and \ref{asubfb} leads to \eqref{gsulb}.
\end{proof}

\subsection{Proof of Proposition~\ref{asulob}}\lb{S4.1}

This subsection is devoted to the proof of Proposition~\ref{asulob}.
We begin by deriving an energy inequality. We put
\begin{gather*}
{\cal D}_1:=n\left| \nabla \log {n}-\nabla v\right|^2
 +p\left| \nabla \log {p}+\nabla v \right|^2,
  \quad {\cal D}_2:=R(n,p)\log (np).
\end{gather*}
\begin{lem}\label{EnergyEq}
There holds that 
\begin{gather}
n,p>0 \quad  \text{a.e. in $I \times \Omega$}.
\label{positive1}
\end{gather}
Furthermore, 
there is a constant $C_1=C_1(C_0,\Omega,\Gamma_D,\rho,r,\varepsilon,c_R)>0$ such that
\begin{equation}
 \frac{d}{dt}\left\{ \int_\Om ({\cal E}_1 +{\cal E}_2) dx +\int_{\Gamma_N} {\cal E}_3 dS \right\}
 +\frac{3}{4} \int_\Om ({\cal D}_1+{\cal D}_2) dx
  \le  C_1 \delta \int_\Omega (n+p) dx +C_1\de.
\label{emineq0}
\end{equation} 
\end{lem}
\begin{proof}
We first show that for $t,t_0 \in I$,
\begin{align}
&\left.\left\{  \int_\Om ({\cal E}_1 +{\cal E}_2)dx +\int_{\Gamma_N} {\cal E}_3 dS \right\}\right|_{t_0}^t
 +\int_{t_0}^t\int_\Om ({\cal D}_1+{\cal D}_2) dxdt
\notag \\
& =\int_{t_0}^t\int_\Om (I_{1}+I_{2}+I_{3}) dxdt +\int_{t_0}^t\int_{\Gamma_N} I_{4} dSdt,
\label{beeq} 
\end{align}
where
\begin{gather*}
I_{1}:=(\nabla n - n\nabla v) \cdot ( \nabla \log {N_b} -\nabla V_b)
  +(\nabla p + p\nabla v) \cdot ( \nabla \log {P_b} +\nabla V_b ),
\notag \\
I_{2}:=R(n,p) \log (N_bP_b),
 \quad I_{3}=- \ve\nabla V_b' \cdot \nabla (v-V_b),
  \quad I_{4}:=-(bV_b'-g') (v-V_b).
\notag
\end{gather*}

To make the computation in this paragraph rigorous, 
we use a mollifier with respect to the time variable $t$ 
due to the insufficiency of the regularity of solutions.
We omit the argument since it is standard.
We differentiate \eqref{ddewv} with respect to $t$ to find that
\begin{equation*}
\ve \int_\Om \nabla v' \cdot \nabla \phi_3 dx+\int_{\Gamma_N} (bv'-g') \phi_3 dS
 =-\langle n'-p',\phi_3 \rangle.
\end{equation*}
Combining \eqref{ddewn}, \eqref{ddewp} and this equality gives
\begin{align}
&\langle n', \phi_1+\phi_3 \rangle +\langle p', \phi_2-\phi_3 \rangle
 +\ve \int_\Om \nabla (v-V_b)' \cdot \nabla \phi_3 dx+\int_{\Gamma_N} b(v-V_b)' \phi_3 dS 
 \notag \\
&\quad +\int_\Om n ( \nabla \log {n} -\nabla v ) \cdot \nabla \phi_1 dx
 +\int_\Om p (\nabla \log {p} +\nabla v ) \cdot \nabla \phi_2 dx 
 +\int_\Om R(n,p) (\phi_1+\phi_2) dx 
 \notag \\
&= -\ve \int_\Om \nabla V_b' \cdot \nabla \phi_3 dx
 -\int_{\Gamma_N} (bV_b'-g') \phi_3 dS.
\label{beeq0}
\end{align}
Let $0<\sigma \le c_0$,
where $c_0$ is the constant in the assumption (H$6$).
Taking $\phi_1=\log (n_{\sigma}/N_b)-(v-V_b)$,
$\phi_2=\log (p_{\sigma}/P_b)+(v-V_b)$ and $\phi_3=v-V_b$ 
and integrating the result over $[t_0,t]$,
we have
\begin{align}
&\left.\left\{ \int_\Om \left(\int_{N_b}^{n} \log \frac{y_\sigma}{N_b} dy +\int_{P_b}^{p} \log \frac{y_\sigma}{P_b} dy +{\cal E}_2 \right)  dx 
+\int_{\Gamma_N} {\cal E}_3 dS \right\}\right|_{t_0}^t
\notag \\
&\quad +\int_{t_0}^t\int_\Om (n|\nabla \log n_{\sigma} |^{2}+p|\nabla \log p_{\sigma}|^{2} 
-2\nabla (n-p)\cdot \nabla v + (n+p) |\nabla v|^{2}) dxdt
\notag \\
&\qquad + \int_{t_0}^t\int_\Om c_R \frac{n_{\sigma}p_{\sigma}-1}{n+p+2} \log (n_{\sigma}p_{\sigma}) dxdt
\notag \\
& = \int_{t_0}^t\int_\Om (I_{1}+I_{2}+I_{3}) dxdt +\int_{t_0}^t\int_{\Gamma_N} I_{4} dSdt
+ \int_{t_0}^t\int_\Om (J_{1}^{\sigma} + J_{2}^{\sigma}) dxdt,
\label{beeq0} 
\end{align}
where $a_{\sigma}=\max\{a,\sigma\}$ for $a \in \bbr$,
\begin{gather*}
J_{1}^{\sigma}:= -(\nabla n -\nabla n_\sigma) \cdot \nabla v 
+(\nabla p -\nabla p_\sigma) \cdot \nabla v,
\qquad
J_{2}^{\sigma}:= c_R \frac{n_{\sigma}p_{\sigma}-np}{n+p+2} \log (n_{\sigma}p_{\sigma}).
\end{gather*}

We now derive \eqref{beeq} by letting $\sigma \to 0$ in \eqref{beeq0}.
The monotone convergence theorem shows that the left-hand side of \eqref{beeq0}
converges to that of \eqref{beeq}.
One sees that $J_1^{\sigma} \to 0$ a.e. in $(t_0,t) \times \Omega$ 
and $|J_{1}^{\sigma}| \leq (|\nabla n| + |\nabla p|)|\nabla v |$.
Furthermore, $J_{2}^{\sigma}$ is estimated as
\begin{align*}
|J_{2}^{\sigma}| &\leq c_R \frac{(\sigma -p)n}{n+p+2} |\log (\sigma n)| \chi_{\{ p<\sigma \le n\}}
 +c_R \frac{(\sigma -n)p}{n+p+2} |\log (\sigma p)| \chi_{\{ n<\sigma \le p\}}
\\
&\quad  +c_R \frac{\sigma^2 -np}{n+p+2} |\log (\sigma^2)| \chi_{\{ n,p<\sigma \}}
\\
&\leq \frac{c_R}{2} \sigma n |\log (\sigma n)| +\frac{c_R}{2} \sigma p |\log (\sigma p)|
 +c_R \sigma^2 |\log \sigma|.
\end{align*}
It follows from the dominated convergence theorem that 
the rightmost term of \eqref{beeq0} converges to $0$.
Thus \eqref{beeq} is verified.
In particular, we have $\int_{\Omega} {\cal D}_{2}  dx <\infty$, 
and therefore \eqref{positive1} holds.

To derive \eqref{emineq0}, we estimate the right-hand side of \eqref{beeq}.
The Schwarz inequality yields
\begin{align}
I_1 & =  n(\nabla \log n -\nabla v) \cdot ( \nabla \log {N_b} -\nabla V_b)
  +p(\nabla \log p +\nabla v) \cdot ( \nabla \log {P_b} +\nabla V_b )
\notag  \\
&\le n \left( \frac{1}{4} \left| \nabla \log {n} -\nabla v \right|^{2}
 +|\nabla \log N_b -\nabla V_b|^2 \right)
+p \left( \frac{1}{4} \left| \nabla \log {p} +\nabla v \right|^{2}
 +|\nabla \log P_b +\nabla V_b|^2 \right)
\notag \\
&\le \frac{1}{4} {\cal D}_1 +\de (n+p). 
\label{beineq1}
\end{align}
It is elementary to show that $|R(n,p)| \le c_R (n+p+1)$.
From this, we see that
\begin{equation}
I_2 \le c_R \de (n+p+1).
\label{beineq2}
\end{equation}
Let $\al \in [1,3/2) $ and $\be \in [1,2)$ 
be the H\"{o}lder conjugates of $\rho>3$ and $r>2$, respectively.
By the H\"{o}lder inequality and \eqref{ell-W11Lp},
we have
\begin{equation}
\int_\Om I_3 dx \le \ve |\nabla V_b'|_\rho |\nabla (v-V_b)|_{\al}
 \le C\de (|n-p|_{1}+1).
\label{beineq3}
\end{equation} 
Using \eqref{ell-W11Lp}, \eqref{tsineq} and the H\"{o}lder inequality,
we deduce that
\begin{align}
\int_{\Gamma_N} I_4 dS &\le |bV_b'|_{4,\Gamma_N} |v-V_b|_{4/3,\Gamma_N}
 +|g'|_{r,\Gamma_N} |v-V_b|_{\be,\Gamma_N}
\notag \\
&\le C (\|V_b'\|_{1} +|g'|_{r,\Gamma_N}) (|n-p|_{1}+1)
\notag \\
&\le C\de (|n-p|_{1}+1).
\label{beineq4}
\end{align}
Here $C>0$ is a constant
depending only on $C_0,\Omega,\Gamma_D,\rho, r$ and $\varepsilon$.

The equality \er{beeq} implies that 
$\int_\Om ({\cal E}_1 +{\cal E}_2) dx +\int_{\Gamma_N} {\cal E}_3 dS$ 
is absolutely continuous in $t$.
Differentiating \er{beeq} in $t$ and then 
plugging \eqref{beineq1}--\eqref{beineq4} into the result,
we obtain the desired inequality.
\end{proof}

We show the following two lemmas to be used in the proof of Proposition~\ref{asulob}.
A key of the proof is to decompose $n$ and $p$ into the parts of the lower and higher values. 
To do so, we introduce a function
\begin{equation*}
{\cal I}^M:=(n+p) \chi_{\{ n+p \ge M\}}, 
\end{equation*}
where $M\geq 0$ and $\chi_A$ denotes the indicator function of a set $A$.


\begin{lem}\label{ulob}
There exist positive constants
$\theta_{1}=\theta_{1}(C_0,\Omega,\Gamma_D,\varepsilon,c_R)$,
$\tilde C_{1}=\tilde C_{1}(C_0,\Omega,\Gamma_D,\varepsilon,c_R)$ and $M_1=M_1(C_0)$
such that if $\theta<\theta_{1}$ and $M \ge M_1$, then
\begin{equation}\label{emineq2}
\int_\Om {\cal I}^M dx \le \tilde C_1\left( \theta +\frac{1}{h(M)}\right)
 \left\{ \int_\Om ({\cal D}_1+{\cal D}_2) dx+1 \right\}.
\end{equation}
Here $h(M)$ is defined in Lemma \ref{A3}.
\end{lem}
\begin{proof}
In the proof,
$C$ denotes a positive constant depending only on
$C_0,\Omega,\Gamma_D,\varepsilon$ and $c_R$.
Set $\sigma:=\max\{|N_{b}|_{\infty}, |P_{b}|_{\infty}\}$ and let $M \ge M_1:=M_*(\sigma)$,
where $M_*(\sigma)$ is the constant being in Lemma~\ref{A3}.
We first claim that it suffices to show the inequality
\begin{equation}\label{lnpineq}
\int_\Omega  (n-p) H_M \left(\log \frac{n_{\sigma}}{p_{\sigma}}\right) dx
 \le C\left( \int_\Omega \mathcal{D}_1 dx +1\right) 
  +C\theta h(M) \left( \int_\Omega {\cal I}^0 dx +1\right),
\end{equation}
where $H_M$ is the function defined in Lemma \ref{A3}.
This claim is verified as follows.
We see from \eqref{abhineq} and \eqref{lnpineq} that
\begin{align}
\int_\Om {\cal I}^M dx 
 &\le \frac{2}{h(M)} \int_\Omega  \left\{ (n-p) H_M \left(\log \frac{n_{\sigma}}{p_{\sigma}}\right)
  +\frac{np-1}{n+p+2} \log (np) \right\} dx
\notag \\
&\le \frac{C}{h(M)} \left\{ \int_\Omega (\mathcal{D}_1 + \mathcal{D}_2)dx +1\right\}
 +C\theta \left( \int_\Omega {\cal I}^0 dx +1 \right).
\label{lnpineq2}
\end{align}
Choosing $M=M_1$ in \eqref{lnpineq2} yields
\begin{equation*}
\int_\Omega {\cal I}^0 dx \le \int_\Om {\cal I}^{M_1} dx +M_1 |\Om|
 \le C\left\{ \int_\Omega (\mathcal{D}_1 + \mathcal{D}_2)dx +1\right\}
  +C\theta \left( \int_\Omega {\cal I}^0 dx +1\right).
\end{equation*}
Hence we have
\begin{equation*}
\int_\Omega {\cal I}^0 dx
 \leq C\left\{ \int_\Omega (\mathcal{D}_1 + \mathcal{D}_2)dx +1\right\},
\end{equation*}
provided that $\theta$ is less than some constant 
$\theta_1=\theta_{1}(C_0,\Omega,\Gamma_D,\varepsilon,c_R)>0$.
Plugging this into \eqref{lnpineq2},
we obtain \eqref{emineq2}.
Thus the claim is proved.

Let us complete the proof by showing \er{lnpineq}. 
We take $\phi_{3}=H_M (\log (n_{\sigma}/p_{\sigma})) \in H^{1}_{D}(\Omega)$ in \eqref{ddewv} to obtain
\begin{gather}
\int_\Omega (n-p) H_M \left(\log \frac{n_{\sigma}}{p_{\sigma}}\right) dx 
 =\varepsilon \int_\Omega I_{1} dx +\int_{\Gamma_N} I_{2} dS
  +\int_{\Gamma_N} I_{3} dS +\int_\Omega I_{4} dx,
\label{L1es} \\
\begin{aligned}
&I_{1}:=-\nabla v \cdot \nabla H_M \left(\log \frac{n_{\sigma}}{p_{\sigma}}\right),
&&I_{2}:=-bv H_M \left(\log \frac{n_{\sigma}}{p_{\sigma}}\right), 
\\
&I_{3}:=g H_M \left(\log \frac{n_{\sigma}}{p_{\sigma}}\right),
&&I_{4}:=D H_M \left(\log \frac{n_{\sigma}}{p_{\sigma}}\right).
\end{aligned}
\notag
\end{gather}
Notice that $I_{1}$ is written as
\begin{align*}
I_1&= \frac{1}{2} \left( \left| \nabla \log n -\nabla v \right|^2 \chi_{\{n > \sigma\}}
 +\left| \nabla \log p+\nabla v \right|^2 \chi_{\{p > \sigma\}} \right)
  \frac{dH_M}{ds} \left(\log \frac{n_{\sigma}}{p_{\sigma}}\right)
\\
&\quad -\frac{1}{2} \left\{ \left| \nabla \log {n_{\sigma}}\right|^2 
 +{\left| \nabla \log {p_{\sigma}} \right|^2}
  +|\nabla v|^2 \left( \chi_{\{n > \sigma\}}+ \chi_{\{p > \sigma\}}\right) \right\}
   \frac{dH_M}{ds} \left(\log \frac{n_{\sigma}}{p_{\sigma}}\right).
\end{align*}
From this and the fact that $0 \le dH_M/ds \le 1$, 
we have
\begin{align}
\int_\Omega I_{1}dx & \leq \frac{1}{2} \int_\Omega 
 \left( \frac{n}{\sigma} \left| \nabla \log n -\nabla v \right|^2 \chi_{\{n > \sigma\}}
  +\frac{p}{\sigma} \left| \nabla \log p+\nabla v \right|^2 \chi_{\{p > \sigma\}} \right) dx
\notag \\
&\quad -\frac{1}{2} \int_\Omega 
 \left( \left| \nabla \log {n_{\sigma}}\right|^2 +\left| \nabla \log {p_{\sigma}} \right|^2 \right)
  \frac{dH_M}{ds} \left(\log \frac{n_{\sigma}}{p_{\sigma}}\right) dx
\notag \\
&\leq \frac{1}{2\sigma} \int_{\Omega} {\cal D}_{1} \,dx
-\frac{1}{2} \int_\Omega \left( \left| \nabla \log {n_{\sigma}}\right|^2 
 +{\left| \nabla \log {p_{\sigma}} \right|^2} \right) 
  \frac{dH_M}{ds} \left(\log \frac{n_{\sigma}}{p_{\sigma}}\right) dx.
\label{L^1es3}
\end{align}
By the fact that $|H_M| \le h(M)$, the H\"{o}lder inequality and \eqref{ell-W11Lp},
the integral of $I_2$ is estimated as
\begin{equation}
\int_{\Gamma_N} I_{2} dS \leq h(M) \theta |v|_{\be,\Gamma_N} 
 \leq C\theta h(M) \left(\int_{\Omega} {\cal I}_{0} \,dx + 1\right),
\label{L^1es4}
\end{equation}
where $\be \in [1,2)$ is the H\"{o}lder conjugate of $q_0>2$.
For the third and fourth terms of the right-hand side of \eqref{L1es},
we utilize the Schwarz and Poincar\'{e} inequalities and \eqref{tpineq} to obtain
\begin{align}
\int_{\Gamma_N} I_{3} dS +\int_{\Omega} I_{4} dx &\le \int_{\Gamma_N} 
 \left( \mu H_M \left(\log \frac{n_{\sigma}}{p_{\sigma}}\right)^2 +\frac{1}{4\mu} g^2 \right) dS 
  +\int_{\Omega} \left( \mu H_M \left(\log \frac{n_{\sigma}}{p_{\sigma}}\right)^2
   +\frac{1}{4\mu} D^2 \right) dx
\notag \\
&\le C\mu \int_\Omega
 \left| \nabla H_M \left(\log \frac{n_{\sigma}}{p_{\sigma}}\right) \right|^2 dx +\frac{C}{\mu}
\notag \\
&\le C\mu \int_\Omega \left( \left| \nabla \log {n_{\sigma}}\right|^2 
 +{\left| \nabla \log {p_{\sigma}} \right|^2} \right) 
  \frac{dH_M}{ds} \left(\log \frac{n_{\sigma}}{p_{\sigma}}\right) dx +\frac{C}{\mu},
\label{L^1es5}
\end{align}
where $\mu>0$ is an arbitrary number.
Plugging \eqref{L^1es3}--\eqref{L^1es5} into \eqref{L1es} and then taking $\mu$ appropriately, 
we obtain \eqref{lnpineq}.
Thus the lemma follows.
\end{proof}


\begin{lem}\label{lemefs}
There exist constants $c_1>0$ and $C>0$ depending only on
$c_{0},C_0,\Om,\Gamma_D,r,\ve$ and $c_R$ such that if 
\begin{equation*}
M \ge \max \{ |N_b|_\infty, |P_b|_\infty \}
\quad
\textrm{and}
\quad
\int_\Omega {\cal I}^M dx \le c_1,
\end{equation*}
then 
\begin{equation*}
\int_\Om ({\cal E}_1 +{\cal E}_2) dx +\int_{\Gamma_N} {\cal E}_3 dS
\le C \left\{ \int_\Omega (\mathcal{D}_1 + \mathcal{D}_2)dx +M^2 +1\right\}.
\end{equation*}
\end{lem}
\begin{proof}
Let $M \ge \max \{ |N_b|_\infty, |P_b|_\infty \}$.
In the proof,
$C$ denotes a generic positive constant depending only on 
$c_{0}$, $C_0,\Om,\Gamma_D,r,\ve$ and $c_R$.
We set
\begin{equation*}
J:=\int_{n \ge M} \frac{|\nabla n|^2}{n} dx+\int_{p \ge M} \frac{|\nabla p|^2}{p} dx
 +\int_\Om (n-p)^2 dx.
\end{equation*}
The assertion follows
if we show the inequality
\begin{equation}\label{ctineq}
J \le C\left( \int_\Om {\cal I}^M dx \right)^{1/2} J
 +C\left( \int_\Om {\cal D}_1 dx +\int_\Om {\cal I}^M dx+M^2+1\right).
\end{equation}
Indeed, this inequality gives
\begin{gather*}
J \le C \left( \int_\Om {\cal D}_1dx +M^2+1\right),
\end{gather*}
provided that $\int_\Omega {\cal I}^M dx$ is sufficiently small.
Combining this with \eqref{ell-H1}, \eqref{tpineq} and \eqref{efeineq},
we see that
\begin{gather*}
\int_{\Omega}{\cal E}_{1}dx 
\leq C\left[ \int_\Omega \{ (n-p)^2 + {\cal D}_{2} \} dx +1\right]
\le C\left\{ \int_\Om ({\cal D}_1 + {\cal D}_2) dx +M^2+1\right\},
\\
\int_{\Omega}{\cal E}_{2}dx + \int_{\Gamma_{N}}{\cal E}_{3}dS 
\leq C\int_{\Omega}{\cal E}_{2}dx
\leq C\left\{ \int_\Omega (n-p)^2 dx +1\right\}
\leq C\left( \int_\Om {\cal D}_1 dx +M^2+1\right),
\end{gather*}
which lead to the desired inequality.

Let us verify \eqref{ctineq}. {It is seen that
\begin{align*}
\frac{|\nabla n|^2}{n}
 &=n\left| \nabla \log {n} - \nabla v \right|^2 +2\nabla n \cdot \nabla v -n|\nabla v|^2, 
 \\
\frac{|\nabla p|^2}{p}
 &= p\left| \nabla \log {p}+\nabla v \right|^2  -2\nabla p \cdot \nabla v -p|\nabla v|^2.
\end{align*}
These inequalities give
\begin{align}
&\int_{n \ge M} \frac{|\nabla n|^2}{n} dx 
 +\int_{p \ge M} \frac{|\nabla p|^2}{p} dx 
\le \int_\Omega {\cal D}_1 dx 
 +2\int_{n \ge M} \nabla n \cdot \nabla v dx
  -2\int_{p \ge M} \nabla p \cdot \nabla v dx. 
\label{ctineqa}
\end{align}
Owing to the condition $M \ge |N_b|_\infty$,
we can substitute $\phi_3=(n-M)_+$ into \eqref{ddewv} to obtain
\begin{align*}
\int_{n \ge M} \nabla n \cdot \nabla v dx
 &=\int_\Om \nabla (n-M)_+ \cdot \nabla v dx \\
&=-\frac{1}{\ve}\int_{\Gamma_N} (bv-g)(n-M)_+dS
 -\frac{1}{\ve}\int_\Om (n-p-D)(n-M)_+ dx.
\end{align*}
Similarly,
putting $\phi_3=(p-M)_+$ yields
\begin{equation*}
\int_{p \ge M} \nabla p \cdot \nabla v dx
 =-\frac{1}{\ve}\int_{\Gamma_N} (bv-g)(p-M)_+dS
  -\frac{1}{\ve}\int_\Om (n-p-D)(p-M)_+ dx.
\end{equation*}
By substituting these equalities into \eqref{ctineqa},
we have
\begin{align}
\int_{n \ge M} \frac{|\nabla n|^2}{n} dx +\int_{p \ge M} \frac{|\nabla p|^2}{p} dx
& \le \int_\Omega {\cal D}_1 dx 
  +\frac{2}{\ve}\left(\int_{\Gamma_N} I_1 dS 
  +\int_{\Gamma_N} I_2 dS  
  +\int_\Om I_3 dx \right), 
\label{ctineqb} 
\end{align}
where}
\begin{gather*}
I_1:=-bv \{ (n-M)_+ -(p-M)_+\}, 
\quad
I_2:=g\{ (n-M)_+ -(p-M)_+\},
\\
I_3:=-(n-p-D)\{ (n-M)_+ -(p-M)_+\}.
\end{gather*}

Let us first estimate the integral of $I_1$.
From \eqref{ell-Linf},
we see that $|v|_\infty \le C(|n-p|_2+1)$,
and hence
\begin{equation}\label{ctineqb1}
|v|_\infty \le C(J^{1/2}+1).
\end{equation}
By \eqref{tpineq} and the H\"{o}lder inequality,
we have
\begin{align*}
|(n-M)_+|_{1,\Gamma_N}
 &\le C|\nabla (n-M)_+|_1 =C\int_{n \ge M} |\nabla n| dx
\\
&\le C\left( \int_{n \ge M} n dx \right)^{1/2}
 \left( \int_{n \ge M} \frac{|\nabla n|^2}{n} dx \right)^{1/2}
\\
&\le C\left( \int_\Om {\cal I}^M dx \right)^{1/2} J^{1/2}.
\end{align*}
Since we also have the inequality with $n$ replaced by $p$,
we deduce that
\begin{equation}\label{ctineqb2}
|(n-M)_+|_{1,\Gamma_N} +|(p-M)_+|_{1,\Gamma_N}
 \le C \left( \int_\Om {\cal I}^M dx \right)^{1/2} J^{1/2}.
\end{equation}
It follows from \eqref{ctineqb1}, \eqref{ctineqb2} 
and the Schwarz inequality that
\begin{align}
\int_{\Gamma_N} I_1 dS
 &\le |b|_{\infty,\Gamma_N} |v|_\infty |(n-M)_+ -(p-M)_+|_{1,\Gamma_N}
\notag \\
&\le C\left( \int_\Om {\cal I}^M dx \right)^{1/2} (J+J^{1/2})
\notag \\
&\le C\left( \int_\Om {\cal I}^M dx \right)^{1/2} J
 +\mu J +\frac{C}{\mu}\int_\Om {\cal I}^M dx,
\label{I1es}
\end{align}
where $\mu$ is an arbitrary positive number.

Next we deal with $I_2$.
Put $n_M=(n-M)_+ +M$ and $p_M=(p-M)_+ +M$, 
and let $r'<2$ be the H\"{o}lder conjugate of $r$.
Then
\begin{align*}
|n_M|_{r',\Gamma_N} &=|\sqrt{n_M}|_{2r',\Gamma_N}^2
 \le 2|\sqrt{n_M}-\sqrt{M}|_{2r',\Gamma_N}^2 +2|\sqrt{M}|_{2r',\Gamma_N}^2
\\
&\le \mu |\nabla (\sqrt{n_M}-\sqrt{M})|_2^2 +C\mu^{-\al} |\sqrt{n_M}-\sqrt{M}|_1^2 +CM,
\end{align*}
where we have used \eqref{trineq} with $q=2r'$ in deriving the last inequality.
Notice that
\begin{gather*}
|\nabla (\sqrt{n_M}-\sqrt{M})|_2^2 =|\nabla \sqrt{n_M}|_2^2
 =\frac{1}{4}\int_\Om \frac{|\nabla n_M|^2}{n_M} dx
  =\frac{1}{4}\int_{n \ge M} \frac{|\nabla n|^2}{n} dx \le \frac{1}{4}J,
\\
|\sqrt{n_M}-\sqrt{M}|_1^2=\left\{ \int_{n \ge M} (\sqrt{n}-\sqrt{M}) dx\right\}^2
 \le \left( \int_{n \ge M} \sqrt{n}dx\right)^2 \le |\Om| \int_{n \ge M} ndx
  \le |\Om| \int_\Om {\cal I}^M dx.
\end{gather*}
Thus we arrive at
\begin{equation*}
|n_M|_{r',\Gamma_N}
 \le \mu J +C\mu^{-\al} \int_\Om {\cal I}^M dx +CM.
\end{equation*}
Since we also have the inequality with $n$ replaced by $p$,
we see that
\begin{equation*}
|(n-M)_+ -(p-M)_+|_{r',\Gamma_N} =|n_M -p_M|_{r',\Gamma_N}
 \le \mu J +C\mu^{-\al} \int_\Om {\cal I}^M dx +CM.
\end{equation*}
This together with the H\"{o}lder inequality gives
\begin{equation}\label{I2es}
\int_{\Gamma_N} I_2 dS
 \le |g|_{r,\Gamma_N} |(n-M)_+ -(p-M)_+|_{r',\Gamma_N}
  \le C\mu J +C\mu^{-\al} \int_\Om {\cal I}^M dx +CM.
\end{equation}

To estimate $I_3$,
we use the following:
\begin{gather*}
(n-M)_+ -(p-M)_+=n-p -(n-M)_- +(p-M)_-, \\
|(n-M)_- -(p-M)_-| \le M, 
 \qquad |(n-M)_+ -(p-M)_+| \le |n-p|. 
\end{gather*}
From these and the Schwarz inequality,
we deduce that
\begin{align}
I_3 &=-(n-p)^2 +(n-p)\{ (n-M)_- -(p-M)_-\} +D \{ (n-M)_+ -(p-M)_+\}
\notag \\
&\le -(n-p)^2 +M |n-p| +|D| |n-p|
\notag \\
&\le -\frac{1}{2} (n-p)^2 +M^2 +|D|^2.
\label{I3es}
\end{align}
Plugging \eqref{I1es}--\eqref{I3es} into \eqref{ctineqb} 
and choosing $\mu$ appropriately small, 
we obtain \eqref{ctineq}.
Thus the proof is complete.
\end{proof}

We are now in a position to prove Proposition~\ref{asulob}.
\begin{proof}[Proof of Proposition~\ref{asulob}]
Put
\begin{equation*}
E=E(t):=\int_\Om ({\cal E}_1+{\cal E}_2) dx +\int_{\Gamma_N} {\cal E}_3 dS
\end{equation*}
and define
\begin{gather*}
\theta_0:=\min \left\{ \theta_1, \frac{1}{4C_{1} \tilde C_1 \max\{\delta,1\}},
 \frac{c_1}{4\tilde C_1 (5C_1 \max \{ \de,1\}+1)} \right\},
\\
M_2:=\max \left\{ h^{-1}\left( \frac{1}{\theta_0}\right), M_1,
 \max \{ |N_b|_\infty, |P_b|_\infty \}\right\},
\end{gather*}
where $\theta_{1}$, $C_{1}$, $\tilde C_{1},c_1$ and $M_1$ are the constants 
given in Lemmas~\ref{EnergyEq}--\ref{lemefs}.

In what follows, we assume that $\theta<\theta_0$.
The proof is completed by showing that
\begin{equation}\label{eloineq}
E' + \tilde{c} E \le \tilde{C}
\end{equation}
in some interval $(T_0,\infty)$,
where $\tilde{c}$ and $\tilde{C}$ are positive constants
depending only on $\max\{\de,1\}$, $c_0$, $C_0$, $\Om$, $\Gamma_D$, $\rho$, $r$, $\ve$ and $c_R$.
Indeed, applying the Gronwall inequality gives
\begin{equation*}
\limsup_{t \to \infty} E(t) \le \limsup_{t \to \infty}
 \left\{ E(T_0) e^{-\tilde{c} (t-T_0)}+\frac{\tilde{C}}{\tilde{c}} \left( 1-e^{-\tilde{c} (t-T_0)}\right) \right\}
  =\frac{\tilde{C}}{\tilde{c}}.
\end{equation*}

First let us determine $T_0$. 
By the definitions of $\theta_0$ and $M_2$, we have
\begin{equation*}
\tilde C_{1}\left(\theta_0 +\frac{1}{h(M_2)}\right)
\leq 2\tilde C_{1}\theta_0
\leq \frac{1}{2C_{1} \max\{\delta,1\}}.
\end{equation*}
Hence we see from \eqref{emineq2} that
\begin{align*}
\int_\Omega (n+p) dx
\le  \int_\Omega {\cal I}^{M_2} dx+ M_2 |\Omega| 
\le \frac{1}{2C_1 \max\{\delta,1\}} \left\{ \int_\Omega ({\cal D}_1+{\cal D}_2) dx +1\right\}
 +  M_2 |\Omega|.
\end{align*}
This together with \eqref{emineq0} gives 
\begin{equation}
E' +\frac{1}{4}\int_\Om ({\cal D}_1+{\cal D}_2) dx \le C_2 \de,
\label{emineq}
\end{equation} 
where $C_2=C_2(\max\{\de,1\},c_0,C_0,\Omega,\Gamma_D,\rho,r,\varepsilon,c_R)>0$ 
is a constant.
Integrating this over $[t_0,t] \subset I$ leads to
\begin{equation*}
\limsup_{t \to \infty} \frac{1}{t-t_0} \int_{t_0}^t \int_\Om ({\cal D}_1+{\cal D}_2) dx dt 
 \le \limsup_{t \to \infty} \left( \frac{4E(t_0)}{t-t_0}+4C_2 \de \right) =4C_2 \de.
\end{equation*}
Therefore we can take $T_0>0$ such that
\begin{equation}\label{dtzes}
\left. \int_\Om ({\cal D}_1+{\cal D}_2) dx\right|_{t=T_0} \le 5C_2 \de.
\end{equation}

The constants $\tilde{c}$ and $\tilde{C}$ are chosen as follows.
The definitions of $\theta_0$ and $M_2$ give
\begin{equation}
\tilde C_1 \left( \theta_0+\frac{1}{h(M_2)}\right) \leq \frac{c_1}{2 (5C_1 \max \{ \de,1\}+1)}.
\label{hmti}
\end{equation}
This together with \eqref{emineq2} and \eqref{dtzes} shows that
\begin{equation*}
\left. \int_\Om {\cal I}^{M_2} dx\right|_{t=T_0}
 \le \frac{c_1}{2 (5C_1 \max \{ \de,1\}+1)}
  \left\{ \left. \int_\Om ({\cal D}_1+{\cal D}_2) dx\right|_{t=T_0} +1\right\} \le \frac{c_1}{2}.
\end{equation*}
This 
gives $\int_\Om {\cal I}^{M_2} dx \le c_1$ in some open interval $J_0 \ni T_0$.
Applying Lemma~\ref{lemefs}, we deduce that
\begin{equation}\label{eobbd}
E  \le C\left\{ \int_\Omega ({\cal D}_1 +{\cal D}_2) dx +1\right\}
  \quad \textrm{in } J_0,
\end{equation}
where $C=C(\max\{ \de,1\},c_0,C_0,\Om,\Gamma_D,\rho,r,\ve,c_R)>0$ is a constant.
This together with \eqref{emineq} yields
\begin{equation}\label{eloineq0}
E' +c_3 E \le C_3 \quad \textrm{in } J_0.
\end{equation}
Furthermore, \eqref{emineq2}, \eqref{dtzes} and \eqref{eobbd} imply that
\begin{equation}\label{etzub}
E(T_0) \le C_4.
\end{equation}
Here $c_3$, $C_3$ and $C_4$ are positive constants depending only on 
$\max\{ \de,1\},c_0,C_0,\Om,\Gamma_D,\rho,r,\ve$ and $c_R$.
Now we choose $\tilde{c}$ and $\tilde{C}$ as 
\begin{equation*}
\tilde{c}:=\min \left\{ c_3, \frac{C_2 \max \{ \de,1\}}{4C_4} \right\},
\quad
\tilde{C}:=C_3.
\end{equation*}

We complete the proof by showing \eqref{eloineq}.
From \eqref{eloineq0}, 
we see that the set 
\begin{equation*}
{\cal T}:= \{ \tau \in (T_0,\infty) ; \eqref{eloineq} \ \mbox{holds in} \
 (T_0,\tau) \}
\end{equation*}
is nonempty,
and therefore $T_1:=\sup {\cal T} \in (T_0,\infty]$.
What is left is to show that $T_1=\infty$.
On the contrary, suppose that $T_1<\infty$.
Then we have either 
\begin{equation*}
\left. \int_\Om {\cal I}^{M_2} dx\right|_{t=T_1} <c_1
\quad \mbox{or} \quad \left. \int_\Om {\cal I}^{M_2} dx\right|_{t=T_1} \ge c_1.
\end{equation*}
We first consider the former case.
In the same way as the derivation of \eqref{eloineq0}, 
one can show that \eqref{eloineq} holds in some open interval $J_1 \ni T_1$.
From this we find that $J_1 \subset {\cal T}$,
which contradicts the fact that $T_1$ is the supremum of ${\cal T}$.
Next let us consider the latter case.
We take $T_2>T_1$ such that $\int_\Om {\cal I}^{M_2} dx \ge c_1/2$ on $[T_1,T_2]$.
Then, from \eqref{emineq2} and \eqref{hmti}, we have
\begin{equation*}
\int_\Om ({\cal D}_1+{\cal D}_2) dx \ge \frac{2 (5C_2 \max \{ \de,1\}+1)}{c_1} \int_\Om {\cal I}^{M_2} dx-1
\geq 5C_2 \max \{ \de,1\}
\end{equation*}
on $[T_1,T_2]$.
Plugging this into \eqref{emineq} leads to
\begin{equation}\label{elces}
E' \le -\frac{1}{4}C_2 \max \{ \de,1\}
 \quad \textrm{on } [T_1,T_2],
\end{equation}
which particularly yields
\begin{equation}\label{etoub}
E \le E(T_1) \quad \textrm{on } [T_1,T_2].
\end{equation}
Since \eqref{eloineq} holds on $[T_0,T_1]$,
we see from the Gronwall inequality that 
\begin{equation*}
E(T_1) \le E(T_0) e^{-\tilde{c} (T_1-T_0)}+\frac{\tilde{C}}{\tilde{c}} \left( 1-e^{-\tilde{c} (T_1-T_0)}\right)
 \le E(T_0)+\frac{\tilde{C}}{\tilde{c}}.
\end{equation*}
This together with \eqref{etzub} and \eqref{etoub} shows that
\begin{equation}\label{euboi}
E \le C_4+\frac{\tilde{C}}{\tilde{c}} \quad \textrm{on } [T_1,T_2].
\end{equation}
By \eqref{elces}, \eqref{euboi} and the definition of $\tilde{c}$,
we obtain 
\begin{equation*}
E' +\tilde{c} E \le -\frac{1}{4}C_2 \max \{ \de,1\} +\tilde{c} C_4 +\tilde{C} \le \tilde{C}
 \quad \textrm{on } [T_1,T_2].
\end{equation*}
This gives $T_2\in {\cal T}$, a contradiction.
We thus conclude that $T_1=\infty$, and the proof is complete.
\end{proof}

\subsection{Proof of Proposition~\ref{libblb}}\lb{S4.2}

We prove Proposition~\ref{libblb} by the iteration argument of Moser.
To this end, we put
\begin{align*}
J_\ga&=J_\ga(t):=\int_\Om \left\{ (n-M_0)_+^{\ga}+ (p-M_0)_+^{\ga} \right\} dx, 
\\
K_\ga&=K_\ga(t):=\int_\Om \left\{ \left| \nabla (n-M_0)_+^{\ga} \right|^2
 +\left| \nabla (p-M_0)_+^{\ga} \right|^2 \right\} dx,
\end{align*}
where  $M_0:=\max \{ |N_b|_\infty, |P_b|_\infty \}$ and $\ga \ge 1$.

\begin{lem}\lb{pselem}
There exist constants $C=C(C_0,\Omega,\Gamma_D,r,\varepsilon,c_R)>0$
and $\be=\be(r)>0$ such that
\begin{equation}\lb{pseineq}
J_{2\ga}' +K_{\ga}
 \le C(\|v\|_1+1)^\be \ga^\be (J_\ga^2+1).
\end{equation}
\end{lem}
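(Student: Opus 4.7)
\emph{Proof plan.}
My approach is Moser iteration: test the weak forms \er{ddewn} and \er{ddewp} against $\phi_1=(n-M_0)_+^{2\ga-1}$ and $\phi_2=(p-M_0)_+^{2\ga-1}$ respectively. These lie in $H^1_D(\Om)$ because the choice $M_0\ge\max\{|N_b|_\infty,|P_b|_\infty\}$ forces $(n-M_0)_+=(p-M_0)_+=0$ on $\Ga_D$. Summing the two tested identities yields
\begin{equation*}
\frac{1}{2\ga}J_{2\ga}' + \frac{2\ga-1}{\ga^2}K_\ga = I_{\mathrm{drift}} + I_{\mathrm{rec}},
\end{equation*}
where $I_{\mathrm{drift}}:=\int_\Om n\nabla v\cdot\nabla(n-M_0)_+^{2\ga-1}dx-\int_\Om p\nabla v\cdot\nabla(p-M_0)_+^{2\ga-1}dx$ and $I_{\mathrm{rec}}:=-\int_\Om R(n,p)\bigl\{(n-M_0)_+^{2\ga-1}+(p-M_0)_+^{2\ga-1}\bigr\}dx$. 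After multiplying by $2\ga$, the coefficient in front of $K_\ga$ is at least $2$ for $\ga\ge 1$, so it remains to bound $2\ga|I_{\mathrm{drift}}|+2\ga|I_{\mathrm{rec}}|$ by $C(\|v\|_1+1)^\be\ga^\be(J_\ga^2+1)$ modulo a small multiple of $K_\ga$ to be absorbed.

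For $I_{\mathrm{drift}}$, I would use the identity $n=(n-M_0)_++M_0$ on the support of $(n-M_0)_+$ to split
\begin{equation*}
\int_\Om n\nabla v\cdot\nabla(n-M_0)_+^{2\ga-1}dx = \frac{2\ga-1}{2\ga}\int_\Om\nabla v\cdot\nabla(n-M_0)_+^{2\ga}dx + M_0\int_\Om\nabla v\cdot\nabla(n-M_0)_+^{2\ga-1}dx,
\end{equation*}
and likewise for the $p$-drift. Taking $\phi_3=(n-M_0)_+^{2\ga},(n-M_0)_+^{2\ga-1},(p-M_0)_+^{2\ga},(p-M_0)_+^{2\ga-1}$ in the Poisson identity \er{ddewv} then eliminates $\nabla v$ in favor of volume integrals of $\ve^{-1}(n-p-D)$ and boundary integrals of $\ve^{-1}(bv-g)$. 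The leading volume contribution $-\ve^{-1}\frac{2\ga-1}{2\ga}\bigl\{\int_\Om n(n-M_0)_+^{2\ga}dx+\int_\Om p(p-M_0)_+^{2\ga}dx\bigr\}$ carries a favorable sign and may be discarded; the remaining $p$-against-$(n-M_0)_+^{2\ga}$ and $n$-against-$(p-M_0)_+^{2\ga}$ cross terms, together with the $D$- and lower-order terms, are controlled by H\"older and the three-dimensional Gagliardo--Nirenberg--Sobolev interpolation $|f|_2^2\le C|\nabla f|_2^{6/5}|f|_1^{4/5}$ applied to $f=(n-M_0)_+^\ga$ and $f=(p-M_0)_+^\ga$, yielding a small multiple of $K_\ga$ plus a $J_\ga^2$-type term after Young's inequality.

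The boundary integrals in $I_{\mathrm{drift}}$ are the main obstacle, since in a three-dimensional domain $v$ is only $H^1$ and no pointwise bound is available. My strategy is to apply the trace embedding \er{tsineq}, giving $|v|_{4,\Ga_N}\le C\|v\|_1$, and then estimate
\begin{equation*}
\int_{\Ga_N}|bv|(n-M_0)_+^{2\ga}dS \le |b|_{\infty,\Ga_N}|v|_{4,\Ga_N}\bigl|(n-M_0)_+^{2\ga}\bigr|_{4/3,\Ga_N} \le C\|v\|_1\bigl|(n-M_0)_+^{2\ga}\bigr|_{4/3,\Ga_N},
\end{equation*}
and analogously for the $g$-term using $|g|_{r,\Ga_N}\le C_0$ with $r>2$. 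The boundary norm is transferred inside by \er{trineq} combined with Gagliardo--Nirenberg applied to $(n-M_0)_+^\ga$; after Young's inequality only $(\|v\|_1+1)^\be\ga^\be J_\ga^2$ survives on the right while the $K_\ga$ part is absorbed on the left. The recombination integral $I_{\mathrm{rec}}$ is then treated routinely via $|R(n,p)|\le\nu(n+p+1)$ and the same decomposition $n(n-M_0)_+^{2\ga-1}=(n-M_0)_+^{2\ga}+M_0(n-M_0)_+^{2\ga-1}$.

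The two accounting subtleties I expect to be delicate are (i) preventing the appearance of a $J_{2\ga+1}$-type term on the right-hand side, which is avoided by exploiting the favorable sign of the leading volume contribution from the Poisson substitution; and (ii) tracking the explicit $\ga$-dependence arising from the derivatives of powers and the coefficients $2\ga-1$, so that all these factors collapse into a single polynomial $\ga^\be$ with $\be$ determined only by $r$ through \er{trineq} and by the fixed Gagliardo--Nirenberg exponents.
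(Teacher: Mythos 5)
The overall structure of your proposal — test with $(n-M_0)_+^{2\ga-1}$, substitute into the Poisson identity to eliminate $\nabla v$, and close via Gagliardo--Nirenberg and trace estimates — matches the paper. The boundary handling (via \eqref{tsineq} and \eqref{trineq}) is also the right idea. However, there is a genuine gap in how you treat the cubic volume terms produced by the Poisson substitution.

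After substituting the Poisson identity into the drift integral, the volume piece coming from $(n-p)$ is, in the combined $n$-plus-$p$ identity, exactly
\begin{equation*}
-\frac{1}{\ve}\int_\Om (n-p)\bigl(F(n)-F(p)\bigr)\,dx,
\qquad
F(z):=\frac{2\ga-1}{2\ga}(z-M_0)_+^{2\ga}+M_0(z-M_0)_+^{2\ga-1}.
\end{equation*}
Since $F$ is nondecreasing, $(n-p)(F(n)-F(p))\ge 0$, so the \emph{entire} cubic term has a sign and may be dropped. You instead split it into the diagonal pieces $n(n-M_0)_+^{2\ga}+p(p-M_0)_+^{2\ga}$, discarded by sign, and the cross pieces $p(n-M_0)_+^{2\ga}$, $n(p-M_0)_+^{2\ga}$, which you claim are controlled by H\"older and the interpolation $|f|_2^2\le C|\nabla f|_2^{6/5}|f|_1^{4/5}$. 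This cannot work: that interpolation bounds the unweighted $\int (n-M_0)_+^{2\ga}\,dx$, but the cross term carries the extra factor $p$ and is of degree $2\ga+1$, while $J_\ga^2$ is degree $2\ga$ and $K_\ga$ is $2\ga$ with one derivative. There is no scheme of H\"older, GNS and Young that turns $\int p\,(n-M_0)_+^{2\ga}\,dx$ into a small multiple of $K_\ga$ plus $C\ga^\be(J_\ga^2+1)$ with constants independent of $n,p$ — you would need an additional a priori bound on $p$ in some $L^s$, which is not available inside this lemma and is not on the right-hand side of \eqref{pseineq}. The pointwise monotonicity of $F$ is therefore not a convenience but the mechanism that makes the cross terms disappear; discarding only the diagonal part and trying to estimate the remainder is too lossy. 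The fix is to observe that $n\nabla(n-M_0)_+^{2\ga-1}=\nabla F(n)$ is a gradient, take $\phi_3=F(n)$ (and $\phi_3=F(p)$) in \eqref{ddewv}, and keep the combined term $\int_\Om(n-p)(F(n)-F(p))\,dx$ intact before invoking its sign. The rest of your argument (the boundary estimate via \eqref{tsineq}, \eqref{trineq}, the use of $|R(n,p)|\le\nu(n+p+1)$, and the bookkeeping of $\ga$-factors) lines up with the paper once this is repaired.
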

\begin{proof}
We put $\zeta:=(n-M_0)_+^\ga$ and $\xi:=(p-M_0)_+^\ga$.
Take $\phi_1=(n-M_0)_+^{2\ga-1}$ in \eqref{ddewn} to obtain
\begin{align}
&\langle n',(n-M_0)_+^{2\ga-1} \rangle
 +(2\ga-1) \int_\Om (n-M_0)_+^{2\ga-2} \nabla n \cdot \nabla (n-M_0)_+ dx
\notag\\
&\quad -\int_\Om n \nabla v \cdot \nabla (n-M_0)_+^{2\ga-1} dx
 +\int_\Om R(n,p)(n-M_0)_+^{2\ga-1} dx=0.
\lb{M1}
\end{align}
The first two terms of the left-hand side of this equality are written as
\begin{gather}
\langle n',(n-M_0)_+^{2\ga-1} \rangle
 =\frac{1}{2\ga}\frac{d}{dt} \int_\Om (n-M_0)_+^{2\ga} dx
  =\frac{1}{2\ga}\frac{d}{dt} |\zeta|_2^2,
\lb{M2}\\
\int_\Om (n-M_0)_+^{2\ga-2} \nabla n \cdot \nabla (n-M_0)_+ dx
 =\frac{1}{\ga^2} \int_\Om \left| \nabla (n-M_0)_+^\ga \right|^2 dx
  =\frac{1}{\ga^2} |\nabla \zeta|_2^2.
\lb{M3}
\end{gather}
Here \eqref{M2} is validated by \eqref{reg0}.
Note that 
$n \nabla (n-M_0)_+^{2\ga-1}=\nabla F(n)$,
where
\begin{equation*}
F(n):=\frac{2\ga-1}{2\ga} (n-M_0)_+^{2\ga} +M_0(n-M_0)_+^{2\ga-1}.
\end{equation*}
Hence, using \eqref{ddewv} with $\phi_3=F(n)$,
we have
\begin{align}
\int_\Om n \nabla v \cdot \nabla (n-M_0)_+^{2\ga-1} dx
 &=\int_\Om \nabla v \cdot \nabla F(n) dx
\notag\\
&= -\frac{1}{\ve} \int_{\Gamma_N} (bv-g) F(n) dS
-\frac{1}{\ve} \int_\Om (n-p-D) F(n) dx.
\lb{M4}
\end{align}
Substituting \eqref{M2}--\eqref{M4} into \eqref{M1} yields
\begin{gather}
\frac{1}{2\ga} \frac{d}{dt} |\zeta|_2^2 +\frac{2\ga-1}{\ga^2} |\nabla \zeta|_2^2
 +\frac{1}{\ve} \int_\Om (n-p) F(n) dx =  \frac{1}{\ve} \int_{\Gamma_N} I_1 dS
 +\int_\Om I_2 dx,
\lb{mifeq} \\
I_1:=-(bv-g) F(n),
\quad
I_2:=\frac{1}{\ve}D F(n)-R(n,p) \zeta^{2-1/\ga}.
\nonumber
\end{gather}

Let us estimate the right-hand side of \eqref{mifeq}.
From now on, 
let $C$ denote a positive constant depending only on 
$C_0,\Omega,\Gamma_D,r,\varepsilon$ and $c_R$.
By the Young inequality,
we have
\begin{align}\lb{M5}
F(n)=\frac{2\ga-1}{2\ga}\zeta^2 +M_0\zeta^{2-1/\ga}
 \le \frac{2\ga-1}{2\ga}\zeta^2 +M_0\left( \frac{2\ga-1}{2\ga}\zeta^2 +\frac{1}{2\ga}\right)
  \le C(\zeta^2 +1).
\end{align}
From this, the H\"{o}lder inequality, \eqref{tsineq} and \eqref{trineq},
we see that
\begin{align}
\int_{\Gamma_N} I_1 dS
 &\le C\int_{\Gamma_N} (b|v|+|g|) (\zeta^2 +1) dS
\nonumber \\
&\le C|b|_{\infty,\Gamma_N} |v|_{4,\Gamma_N} |\zeta^2 +1|_{4/3,\Gamma_N}
 +C|g|_{r,\Gamma_N} |\zeta^2 +1|_{r',\Gamma_N}
\nonumber \\
&\le C\left( |v|_{4,\Gamma_N}+1\right) \left( |\zeta|_{8/3,\Gamma_N}^2 +|\zeta|_{2r',\Gamma_N}^2 +1\right)
\nonumber \\
&\le C\left( \| v\|_1+1\right) \left( \mu |\nabla \zeta|_2^2 +\mu^{-\tilde \al} |\zeta|_1^2+1\right),
\lb{miites}
\end{align}
where $r'<2$ is the H\"{o}lder conjugate of $r$,
$\tilde \al=\tilde \al (r)>0$ is a constant, and $\mu>0$ is an arbitrary number.
Furthermore, \eqref{M5} and $|R(n,p)| \le c_R (n+p+1)$ together with the Young inequality yield
\begin{align*}
I_2 
 \le C(\zeta^2 +\xi^2+1).
\end{align*}
By the Galiardo-Nirenberg, Poincar\'{e} and Young inequalities,
we have 
\begin{align}\lb{miites1}
\int_\Om I_2 dx \le & C(|\nabla \zeta|_2^{6/5} |\zeta|_1^{4/5}+|\nabla \xi|_2^{6/5} |\xi|_1^{4/5})+C
\notag \\
\leq &C\mu (|\nabla \zeta|_2^2+|\nabla \xi|_2^2) 
+C\mu^{-3/2} (|\zeta|_1^2+|\xi|_1^2) +C.
\end{align}

Plugging \eqref{miites} and \eqref{miites1} into \eqref{mifeq},
we deduce that
\begin{align*}
&\frac{d}{dt} |\zeta|_2^2 +\frac{2(2\ga-1)}{\ga} |\nabla \zeta|_2^2
 +\frac{2\ga}{\ve} \int_\Om (n-p) F(n) dx 
\\
&\le C\left( \| v\|_1+1\right) \ga 
 \left\{ \mu (|\nabla \zeta|_2^2+|\nabla \xi|_2^2) 
  +\max \{ \mu^{-\tilde \al},\mu^{-3/2}\} (|\zeta|_1^2+|\xi|_1^2) +1\right\}.
\end{align*}
Performing the same computation for $\xi$
and adding the result to the above inequality,
we obtain
\begin{align*}
&\frac{d}{dt} J_{2\ga} +\frac{2(2\ga-1)}{\ga} K_\ga
 +\frac{2\ga}{\ve} \int_\Om (n-p) (F(n)-F(p)) dx 
\\
&\le C\left( \| v\|_1+1\right) \ga 
 \left( \mu K_\ga +\max \{ \mu^{-\tilde \al},\mu^{-3/2}\} J_\ga^2 +1\right).
\end{align*}
Note that $(n-p) (F(n)-F(p)) \ge 0$,
since $F(z)$ is nondecreasing in $z$.
Therefore, by choosing $\mu$ as $\mu=c\left( \| v\|_1+1\right)^{-1}\ga^{-1}$
with a suitable constant $c=c(C_0,\Omega,\Gamma_D,r,\varepsilon,c_R)>0$,
we conclude that \eqref{pseineq} holds with $\beta:=\max \{ \tilde \al+1, 5/2\}$.
\end{proof}

Let us prove Proposition~\ref{libblb}.

\begin{proof}[Proof of Proposition~\ref{libblb}]
In the proof, $c$ and $C$ denote positive constants
depending only on $L$, $c_0$ $C_0$, $\Omega$, $\Gamma_D$, $r$, $\varepsilon$ and $c_R$.
From the definition of $L$,
we can take $\tau_0 \in I$ such that for all $t \ge \tau_0$,
\begin{equation}\lb{joves}
J_1(t) +\| v(t)\|_1 \le 2L.
\end{equation}

We first take $\ga=1$ in \eqref{pseineq}.
Since the Poincar\'{e} inequality gives $K_1 \geq cJ_2$,
we see from the Gronwall inequality that
\begin{equation*}
J_2(t) \le J_2(\tau_0) e^{-c(t-\tau_0)}
 +C\int_{\tau_0}^t e^{-c(t-\tau)} (\|v(\tau)\|_1+1)^\be (J_1(\tau)^2+1) d\tau
\end{equation*}
for all $t \ge \tau_0$.
From this and \eqref{joves}, 
we have
\begin{equation*}
\limsup_{t \to \infty} J_2(t)
 \le \limsup_{t \to \infty} \left\{ J_2(\tau_0) e^{-c(t-\tau_0)}
  +C\int_{\tau_0}^t e^{-c(t-\tau)}d\tau \right\} = \frac{C}{c}.
\end{equation*}
This particularly gives
\begin{equation}\lb{jtes}
\limsup_{T \to \infty} \int_{T-1}^{T+1} \int_\Om (n^2 +p^2) dxdt
 \le C\limsup_{T \to \infty} \int_{T-1}^{T+1} (J_2(t) +1) dt \le C.
\end{equation}
By the iteration argument of Moser~\cite{M64},
one can show that for all $T \ge \tau_0+1$,
\begin{equation}\lb{whineq}
\sup_{[T,T+1] \times \Om} (n+p)
 \le C\left\{ \int_{T-1}^{T+1} \int_\Om (n^2 +p^2) dxdt \right\}^{1/2} +C.
\end{equation}
The proposition immediately follows by combining \eqref{jtes} and \eqref{whineq}.

To complete the proof, we briefly derive \eqref{whineq}.
Let $0<\kappa \le 1/2$ and let $\rho \in C^\infty(\bbr)$ satisfy
\begin{equation}\lb{prrh}
\rho(t)=\left\{
\begin{aligned}
&0 && \text{for } t \le T-2\kappa, 
\\
&1 && \text{for } t \ge T-\kappa,
\end{aligned}
\right.
\qquad
0 \le \rho'(t) \le \frac{2}{\kappa}. 
\end{equation}
Multiplying \eqref{pseineq} by $\rho$ and integrating it, we
see that for all $T-1 \le t_1 \le T+1$,
\begin{align}
J_{2\ga}(t_1)\rho(t_1) +\int_{T-1}^{t_1} K_\ga \rho dt
 &\le \int_{T-1}^{t_1} J_{2\ga} \rho' dt +C\ga^\be \int_{t-1}^{t_1} (J_\ga^2+1) \rho dt
\notag \\
 &\le C\left( \frac{1}{\kappa} +\ga^\be \right) \left(\int_{T-2\kappa}^{T+1} J_{2\ga} dt+1\right),
\lb{D1}
\end{align}
where we have used  \eqref{joves} and \eqref{prrh} and the fact that $J_\ga^2 \le CJ_{2\ga}$
in deriving the last inequality.
We take $t_1=T+1$ in \eqref{D1} to obtain
\begin{equation}
\int_{T-\kappa}^{T+1} K_\ga dt
 \le C\left( \frac{1}{\kappa} +\ga^\be \right) 
\left(\int_{T-2\kappa}^{T+1} J_{2\ga} dt+1\right).
\lb{D2}
\end{equation}
We choose $t_1 \in [T-\kappa,T+1]$ such that
\begin{equation*}
J_{2\ga}(t_1)=\max_{t \in [T-\kappa,T+1]} J_{2\ga}(t).
\end{equation*}
Then \eqref{D1} also gives
\begin{equation}
\max_{t \in [T-\kappa,T+1]} J_{2\ga}(t)
 \le C\left( \frac{1}{\kappa} +\ga^\be \right) 
\left(\int_{T-2\kappa}^{T+1} J_{2\ga} dt+1\right).
\lb{D3}
\end{equation}
We know from \cite[Lemma~2]{M64} that for $\la=5/3$,
\begin{equation*}
\int_{T-\kappa}^{T+1} J_{2\la \ga} dt
 \le C\left( \max_{t \in [T-\kappa,T+1]} J_{2\ga}(t) \right)^{2/3}
  \int_{T-\kappa}^{T+1} K_{\ga} dt.
\end{equation*}
This together with \er{D2} and \er{D3} leads to 
\begin{equation*}
\left( \int_{T-\kappa}^{T+1} J_{2\la \ga} dt \right)^{1/(2\la \ga)}+1
 \le \tilde{C}^{1/(2\ga)} \left( \frac{1}{\kappa} +\ga^\be \right)^{1/(2\ga)}
  \left\{ \left( \int_{T-2\kappa}^{T+1} J_{2\ga} dt \right)^{1/(2\ga)} +1\right\},
\end{equation*}
where $\tilde{C}=\tilde C(L,C_0,\Omega,\Gamma_D,r,\varepsilon,c_R)>0$.
Substituting $\ga=\ga_n:=\la^n$ and $\kappa=\kappa_n:=2^{-n-1}$ into this inequality, we have
$I_{n+1} \le \Lambda_n^{1/(2\ga_n)} I_n$, where
\begin{equation*}
I_n:=\left( \int_{T-2\kappa_n}^{T+1} J_{2\ga_n} dt \right)^{1/(2\ga_n)}+1,
\quad
\Lambda_n:=\tilde C( 2^{n+1}+\la^{\be n}),
\quad
n=0,1,\ldots.
\end{equation*}
Hence we see that
\begin{equation*}
I_n \le \prod_{k=0}^n \Lambda_k^{1/(2\ga_k)} I_0 \leq C I_0.
\end{equation*}
Letting $n \to \infty$ gives \eqref{whineq}, and the proof is complete.
\end{proof}

\subsection{Proof of Proposition~\ref{asubfb}}\lb{S4.3}

To obtain the lower bounds of $n$ and $p$,
we derive an inequality similar to \eqref{pseineq}.
Let $m_0$ be a constant defined by
\begin{equation*}
m_0:=\min \left\{ c_0, \frac{\ve c_R}{(2c_0+2+|D|_\infty+\ve c_R)^2} \right\}.
\end{equation*}
For $\ga>0$, $\ga \neq 1$, $0<\sigma \le m_0/2$ and $z \ge 0$, we write
\begin{gather*}
G_0 (z):=\left\{ (z_\sigma)^{-\ga} -m_0^{-\ga} \right\}_+
 =(z_\sigma^{m_0})^{-\ga} -m_0^{-\ga},
\quad
G_1 (z):=\int_z^{m_0} G_0(y) dy,
\\
G_2 (z):=\int_z^{m_0} \sqrt{-\frac{dG_0}{dz}(y)} dy
 =\frac{2\sqrt{\ga} }{\ga-1}
  \left\{ (z_\sigma^{m_0})^{\frac{-\ga+1}{2}} -m_0^{\frac{-\ga+1}{2}} \right\},
\\
G_3 (z):=\int_z^{m_0} -y\frac{dG_0}{dz}(y) dy
 =\frac{\ga }{\ga-1}
  \left\{ (z_\sigma^{m_0})^{-\ga+1} -m_0^{-\ga+1} \right\},
\end{gather*}
where 
\begin{equation*}
z_\sigma^{m_0}:=(z_\sigma-m_0)_+ +m_0
=\left\{
\begin{aligned}
&\sigma && (z \le \sigma),
\\
&z && (\sigma<z<m_0),
\\
&m_0 && (z \ge m_0).
\end{aligned}
\right.
\end{equation*}

\begin{lem}
There exist constants $C=C(C_0,\Omega,\Gamma_D,r,\varepsilon)>0$
and $\be=\be(r)>0$ such that
\begin{align}
&\frac{d}{dt} \int_\Om G_1 (n) dx
 +\frac{1}{2} \int_\Om |\nabla G_2(n)|^2 dx
  +\sqrt{\frac{c_R}{\ve}} \int_\Om \sqrt{G_3(n)G_0(n)}  dx 
\nonumber
\\
&\le C\left( \| v\|_1+1\right)^\be |\ga-1|^\be \int_\Om G_2(n)^2 dx
 +\frac{C\left( \| v\|_1+1\right)\ga}{|\ga-1|}m_0^{-2\ga+2}.
\label{psaineq}
\end{align}
\end{lem}
\begin{proof}
We can choose $\phi_1=G_0(n) \in H^1_D (\Om)$ as a test function in \eqref{ddewn}.
Then we have
\begin{equation*}
\frac{d}{dt} \int_\Om G_1 (n) dx
 +\int_\Om |\nabla G_2(n)|^2 dx
  +\int_\Om n \nabla v \cdot \nabla G_0(n) dx
   -\int_\Om R(n,p)G_0(n) dx=0,
\end{equation*}
where we have used the fact that
\begin{equation*}
\langle n',G_0(n) \rangle
 =-\frac{d}{dt} \int_\Om G_1(n) dx,
\quad
\int_\Om \nabla n \cdot \nabla G_0(n) dx
 =-\int_\Om |\nabla G_2(n)|^2 dx.
\end{equation*}
By using \eqref{ddewv} with $\phi_3=G_3(n) \in H^1_D (\Om)$,
the third term of the left-hand side is computed as
\begin{align*}
\int_\Om n\nabla v \cdot \nabla G_0(n) dx
 =\int_\Om \nabla v \cdot \nabla G_3(n) dx
  = -\frac{1}{\ve} \int_{\Gamma_N} (bv-g) G_3(n) dS
   -\frac{1}{\ve} \int_\Om (n-p-D) G_3(n) dx.
\end{align*}
Therefore we arrive at
\begin{gather}
\frac{d}{dt} \int_\Om G_1 (n) dx +\int_\Om |\nabla G_2(n)|^2 dx
 +\int_\Om I_2 dx =\frac{1}{\ve} \int_{\Gamma_N} I_1 dS,
\lb{mifeq2} \\
I_1:=(bv-g) G_3(n),
\quad
I_2:=-\frac{1}{\ve}(n-p-D)G_3(n) -R(n,p)G_0(n).
\nonumber
\end{gather}

Notice that
$G_3(z)=(\ga-1)G_2(z)^2/4 +\sqrt{\ga} m_0^{-\ga+1}G_2(z)$.
From this, we have
\begin{equation*}
G_3(z)
 \le |\ga-1|G_2(z)^2 +\frac{\ga}{|\ga-1|}m_0^{-2\ga+2}.
\end{equation*}
Therefore, in the same way as the derivation of \eqref{miites},
the right-hand side of \eqref{mifeq2} is estimated as
\begin{align}
\int_{\Gamma_N} I_1 dS &\le \int_{\Gamma_N} (b|v|+|g|)
 \left( |\ga-1|G_2(n)^2 +\frac{\ga}{|\ga-1|}m_0^{-2\ga+2}\right) dS
\nonumber
\\
&\le \frac{\ve}{2} |\nabla G_2(n)|_2^2 
 +C\left( \| v\|_1+1\right)^\be |\ga-1|^\be |G_2(n)|_1^2
  +\frac{C\ga}{|\ga-1|}\left( \| v\|_1+1\right) m_0^{-2\ga+2}.
\label{psa1}
\end{align}
Here $\be=\be (r)>0$ and $C=C(C_0,\Omega,\Gamma_D,r,\varepsilon)>0$ are constants.
To estimate $I_2$,
we note that
\begin{equation}
z G_0(z)=\int_z^{m_0} -z\frac{dG_0}{dz}(y) dy \le G_3(z)
 \le \int_z^{m_0} -m_0\frac{dG_0}{dz}(y) dy=m_0 G_0(z).
\label{G0G3ineq}
\end{equation}
Hence we see that
\begin{align*}
I_2&=\left\{ \frac{1}{\ve}(n+p+2)G_3(n)+\frac{c_R}{n+p+2}G_0(n) \right\}
 -\left\{ \frac{1}{\ve}(2n+2-D)G_3(n) +\frac{c_R p}{n+p+2} nG_0(n) \right\}
\\
&\ge 2\sqrt{\frac{c_R}{\ve}G_3(n)G_0(n)}
 -\left\{ \frac{1}{\ve}(2m_0+2+|D|_\infty)G_3(n) +c_R G_3(n) \right\}
\\
&\ge 2\sqrt{\frac{c_R}{\ve}G_3(n)G_0(n)}
 -\left\{ \frac{1}{\ve}(2m_0+2+|D|_\infty) +c_R \right\} \sqrt{m_0 G_3(n)G_0(n)}.
\end{align*}
Here, in deriving the first inequality, 
we have used the Schwarz inequality, the fact that $G_3(n)=0$ for $n \le m_0$
and \eqref{G0G3ineq},
and in deriving the second inequality, we have used \eqref{G0G3ineq}.
By the definition of $m_0$,
we conclude that
\begin{equation}\label{psa2}
I_2 \ge \sqrt{\frac{c_R}{\ve}G_3(n)G_0(n)}.
\end{equation}
Plugging \eqref{psa1} and \eqref{psa2} into \eqref{mifeq2} yields the desired inequality.
\end{proof}

We conclude this section by proving Proposition~\ref{asubfb}.

\begin{proof}[Proof of Proposition~\ref{asubfb}]
In the proof, we denote by $C$ a positive constant depending only on 
$\tilde L$, $c_0$, $C_0$, $\Omega$, $\Gamma_D$, $r$, $\varepsilon$ and $c_R$.
By the assumption.
we can choose $\tau_0>0$ such that $\| v(t)\|_1 \le 2 \tilde L$ for all $t \ge \tau_0$.

In what follows, we fix $T \ge \tau_0+1$.
Let us show that
\begin{equation}\lb{nmgi} 
\int_{T-1}^{T+1} \int_\Om (n_\sigma^{m_0})^{-\frac{1}{4}} dxdt \le C.
\end{equation}
For this purpose,
we integrate both sides of \eqref{psaineq} over $[T-1,T+1]$ to obtain
\begin{align}
&\int_{T-1}^{T+1} \int_\Om \sqrt{G_3(n)G_0(n)} dxdt
\nonumber
\\
&\le \left. \int_\Om G_1 (n) dx \right|_{t=T-1}
 +C\left( |\ga-1|^\be \int_{T-1}^{T+1} \int_\Om G_2(n)^2 dxdt
  +\frac{\ga}{|\ga-1|}m_0^{-2\ga+2} \right).
\lb{nmgifs}
\end{align}
It is elementary to show that if $\ga<1$, then
\begin{gather*}
G_1(z) 
 \le \frac{1}{1-\ga}m_0^{1-\ga},
\quad
G_2(z) \le \frac{2\sqrt{\ga}}{1-\ga} m_0^{\frac{1-\ga}{2}},
\quad
G_3(z) \ge \frac{\ga}{1-\ga} \left( 1-\frac{1}{2^{1-\ga}}\right) m_0^{1-\ga}
 \chi_{\{ z \le m_0/2\}}.
\end{gather*}
Hence, by choosing $\ga=1/2$ in \eqref{nmgifs},
we obtain \eqref{nmgi}.

From now on, 
we suppose that $\ga>1$.
Let $t_1 \in [T-1,T+1]$ and 
let $\rho \in C^\infty (\bbr)$ be a nonnegative function satisfying $\rho (T-1)=0$.
Then, multiplying \eqref{psaineq} by $\rho$ and integrating over $[T-1,t_1]$ yield
\begin{align*}
&\left. \rho \int_\Om G_1(n) dx \right|_{t=t_1}
 +\int_{T-1}^{t_1} \int_\Om |\nabla G_2(n)|^2 \rho dxdt
\\
&\le \int_{T-1}^{t_1} \int_\Om G_1(n) \rho' dxdt
 +C\left\{ (\ga-1)^\be \int_{T-1}^{t_1} \int_\Om G_2(n)^2 \rho dxdt
   +\frac{\ga}{\ga-1} m_0^{-2\ga+2} \int_{T-1}^{t_1} \rho dt \right\}.
\end{align*}
Note that
\begin{gather*}
\frac{1}{\ga-1}(z_\sigma^{m_0})^{-\ga+1}-\frac{\ga}{\ga-1}m_0^{-\ga+1}
 \le G_1(z) \le \frac{\ga}{\ga-1}(z_\sigma^{m_0})^{-\ga+1},
\qquad
G_2(z) 
 \le \frac{2\sqrt{\ga}}{\ga-1} (z_\sigma^{m_0})^{-\frac{\ga-1}{2}}.
\end{gather*}
From these, 
we have
\begin{align*}
&\left. \rho \int_\Om (n_\sigma^{m_0})^{-(\ga-1)} dx \right|_{t=t_1}
 +\frac{\ga}{\ga-1} \int_{T-1}^{t_1} \int_\Om 
  \left| \nabla \left\{ (n_\sigma^{m_0})^{\frac{-\ga+1}{2}} \right\}\right|^2 \rho dxdt
\\
&\le C\int_{T-1}^{t_1} \int_\Om 
 (n_\sigma^{m_0})^{-(\ga-1)} \left( \ga |\rho'| +\ga^\be \rho \right) dxdt
  +C\ga m_0^{-2\ga+2} \int_{T-1}^{t_1} \rho dt +C\ga m_0^{-\ga+1}.
\end{align*}
Thus, by the same argument as in the derivation of \eqref{whineq}, 
we deduce that
\begin{equation*}
\sup_{[T,T+1] \times \Om} (n_\sigma^{m_0})^{-1}
 \le C\left\{ \int_{T-1}^{T+1} \int_\Om (n_\sigma^{m_0})^{-\frac{1}{4}} dxdt \right\}^4 +C.
\end{equation*}
Plugging \eqref{nmgi} into this inequality and letting $\sigma \to 0$,
we obtain $|n(T)^{-1}|_\infty \le C$.
The inequality $|p(T)^{-1}|_\infty \le C$ can be shown in the same way,
and therefore the proof is complete.
\end{proof}

\section{Estimates of the difference of solutions}\lb{energy}
In this section we estimate the relative entropy of any two solutions 
$(n_1,p_1,v_1)$ and $(n_2,p_2,v_2)$ of \er{ddeq}.
Theorem~\ref{uesthm} ensures that if $\theta<\theta_0$, 
then $(n_1,p_1,v_1)$ and $(n_2,p_2,v_2)$ satisfy
\begin{equation}\lb{asp1}
(2\hat C)^{-1} \le n_1,n_2,p_1,p_2 \le 2\hat C \quad \text{in } (\tilde t,\infty) \times \Omega
\end{equation}
for some $\tilde t \in \mathbb R$, where
$\hat C=\hat C(c_0,C_0,\Omega,\Gamma_D,\rho,r,\ve,c_R)$ 
is the constant being in \eqref{gsulb}.
Throughout this section, we suppose that $\theta<\theta_0$ and $t \geq \tilde{t}$.
We set 
\begin{equation*}
\varphi:=\frac{n_1}{n_2}-1, \quad \psi:=\frac{p_1}{p_2}-1, \quad \eta:=v_1-v_2.
\end{equation*}
The goal of this section is to prove the following proposition.
\begin{pro}\lb{abDiff}
There exist positive constants $\de_0$, $c$ and $C$ depending only on 
$c_0$, $C_0$, $\Om$, $\Gamma_D$, $\rho$, $r$, $\ve$ and $c_R$ such that
if $\de<\de_0$, then the following inequalities hold:
\begin{gather}
|n_2 \varphi (t)|_2+|p_2 \psi (t)|_2+\| \eta (t)\|_1 \le Ce^{-c(t-\tilde t)},
\lb{dgsineq} \\
\int_{\tilde t}^t e^{c(s-\tilde t)}
 \left( |\nabla \log (1+\varphi) (s)|_2^2+|\nabla \log (1+\psi) (s)|_2^2 \right) ds \le C.
\lb{ddgsineq}
\end{gather}
\end{pro}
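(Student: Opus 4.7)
The natural object is the relative entropy
\begin{equation*}
E(t) := \int_\Om \left[ n_1 \log \frac{n_1}{n_2} - (n_1-n_2) + p_1 \log \frac{p_1}{p_2} - (p_1-p_2) \right] dx + \frac{\ve}{2} |\nabla \eta|_2^2 + \frac{1}{2}\int_{\Gamma_N} b\eta^2 dS.
\end{equation*}
Each density term is nonnegative and, under the a priori bounds \eqref{asp1}, comparable to $(n_1-n_2)^2$ and $(p_1-p_2)^2$ respectively; the electrostatic part is equivalent to $\|\eta\|_1^2$ by Poincar\'e since $\eta \in H^1_D(\Om)$. Thus $E(t)$ is equivalent to $|n_2\varphi|_2^2 + |p_2\psi|_2^2 + \|\eta\|_1^2$, and \eqref{dgsineq} will follow from an exponential decay estimate for $E$.

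The plan is to subtract the weak formulations \eqref{ddewn}--\eqref{ddewv} for the two solutions and test with $\log(1+\varphi) - \eta$, $\log(1+\psi)+\eta$, and $\eta$, all of which lie in $H^1_D(\Om)$ because $n_1, n_2$ share the Dirichlet trace $N_b$ on $\Gamma_D$ (likewise for $p$ and $v$). Justifying the time-derivative manipulations by mollification in $t$ as in Lemma~\ref{edelem}, these natural choices should produce the identity
\begin{equation*}
\frac{dE}{dt} + \mathcal D(t) = \mathcal R(t),
\end{equation*}
where the principal dissipation satisfies
\begin{equation*}
\mathcal D(t) \ge \mu \int_\Om \bigl\{ n_1 |\nabla \log(1+\varphi) - \nabla \eta|^2 + p_1 |\nabla \log(1+\psi) + \nabla \eta|^2 \bigr\} dx + \text{(recombination)},
\end{equation*}
the recombination contribution being nonnegative thanks to the monotonicity of $R$ in $np$ and the identity $[R(n_1,p_1)-R(n_2,p_2)][\log(n_1p_1)-\log(n_2p_2)] \ge 0$, while $\mathcal R(t)$ collects cross terms involving $\nabla V_b$, $V_b'$, $g'$ and the nonlinear boundary contribution generated by $b \not\equiv 0$, each paired with a factor controlled by $\delta$.

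To close the estimate I would use \eqref{asp1} to replace the weights $n_1, p_1$ by uniform positive constants, then apply Poincar\'e on $H^1_D(\Om)$ and the subtracted Poisson relation (noting $(n_1-n_2)-(p_1-p_2) = n_2\varphi - p_2\psi$) to deduce $\mathcal D(t) \ge c_1 E(t) + c_2(|\nabla\log(1+\varphi)|_2^2 + |\nabla\log(1+\psi)|_2^2)$ after absorbing the mixed gradient term. Concurrently, Schwarz together with the trace inequalities of Lemma~\ref{Traces} and the $L^\infty$-bound of Lemma~\ref{eespro} give $|\mathcal R(t)| \le C\delta(E(t) + \mathcal D(t))$. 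Choosing $\delta_0$ so that $C\delta_0 \le \min\{c_1/2, 1/2\}$ yields $E' + (c_1/2) E + (c_2/2)\bigl(|\nabla\log(1+\varphi)|_2^2 + |\nabla\log(1+\psi)|_2^2\bigr) \le 0$; Gronwall then gives \eqref{dgsineq}, and multiplying by $e^{c(s-\tilde t)}$ for $c<c_1/2$ and integrating from $\tilde t$ to $t$ gives \eqref{ddgsineq}.

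The main obstacle is the boundary term of the form $\int_{\Gamma_N} b\eta \cdot [\log(1+\varphi)-\log(1+\psi)] dS$ that appears when one reorganizes $n_1\nabla v_1 - n_2\nabla v_2$ using \eqref{ddewv}; this is the same difficulty already flagged in the introduction after Theorem~\ref{tpsthm}, and its resolution will require the refined estimate stated in Lemma~\ref{mpees}, which exploits the low regularity \eqref{reg0} together with \eqref{asp1}. A secondary technical point is that, because $n', p' \in L^2(J; H^1_D(\Om)^*)$ only, the differentiation of $E$ and the pairings involving $\eta'$ must be handled through a standard time-mollification, exactly as in Section~\ref{S4.1}.
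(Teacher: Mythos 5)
Your overall architecture (relative entropy $E(t)$, testing the subtracted weak formulations with $\log(1+\varphi)-\eta$, $\log(1+\psi)+\eta$, $\eta$, then Gronwall) matches the paper, and the observation that $\mathcal E$ is comparable to $|n_2\varphi|_2^2+|p_2\psi|_2^2+\|\eta\|_1^2$ is correct. But there are two substantive issues.

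First, the step $|\mathcal R(t)|\le C\delta\bigl(E(t)+\mathcal D(t)\bigr)$ does not hold as a pointwise-in-time bound. The error terms that survive in the entropy identity (the paper's $\mathcal K$, $\mathcal L$, $\mathcal M$) are not weighted by the \emph{data} quantity $\delta$; after Schwarz and Lemma~\ref{mpees} they are weighted by
\[
a(t)=|(n_2p_2-1)(t)|_2^2+|(\nabla n_2-n_2\nabla v_2)(t)|_2^2+|(\nabla p_2+p_2\nabla v_2)(t)|_2^2,
\]
which is a quantity of the \emph{reference solution} $(n_2,p_2,v_2)$. There is no reason this is pointwise $\le C\delta$: it is controlled only in time average, via Lemma~\ref{edelem} applied to $(n_2,p_2,v_2)$, giving $\int_s^t a(\tau)\,d\tau\le C+C\delta(t-s)$. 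The paper therefore lands on $E'+(c-Ca)E+c(|\nabla\varphi|_2^2+|\nabla\psi|_2^2)\le 0$ and integrates with the weight $\exp\bigl(\int_s^t(c-Ca(\tau))\,d\tau\bigr)$, which is where the smallness requirement $\delta<\delta_0$ actually enters. Your version closes Gronwall without this time-averaging step and so has a gap.

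Second, you misidentify what Lemma~\ref{mpees} is for. No boundary term of the form $\int_{\Gamma_N}b\eta[\log(1+\varphi)-\log(1+\psi)]\,dS$ appears: the boundary contribution from testing \eqref{ddewe} with $\eta$ is absorbed into the entropy as $\tilde{\mathcal E}=\tfrac{b}{2}\eta^2$. The genuine low-regularity difficulty is the \emph{bulk} term $\mathcal M=\varphi(\nabla n_2-n_2\nabla v_2)\cdot\nabla\eta-\psi(\nabla p_2+p_2\nabla v_2)\cdot\nabla\eta$: the products $\varphi\nabla\eta$, $\psi\nabla\eta$ do not obviously lie in $L^2$ because $\nabla\eta$ is only in $L^2$ and $\varphi$ is only in $H^1\cap L^\infty$. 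Lemma~\ref{mpees} uses the Poisson equation for $\eta$ (with test function $(\varphi^2+\psi^2)\eta$) together with the $L^\infty$ bound of Lemma~\ref{eespro} to obtain $|\varphi\nabla\eta|_2+|\psi\nabla\eta|_2\le C(|n_2\varphi|_2+|p_2\psi|_2)(|\nabla\varphi|_2+|\nabla\psi|_2)$, which is exactly the estimate needed to treat $\mathcal M$ by Young's inequality. (The boundary difficulty tied to $b\neq 0$ that you are thinking of belongs to the proof of Theorem~\ref{uesthm} and is resolved by Lemma~\ref{lemefs}, not Lemma~\ref{mpees}.)
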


For the proof, let us first find the equations for $\varphi$, $\psi$ and $\eta$.
By \eqref{ddewn},
we have
\begin{equation*}
\langle n_1'-n_2', \phi_1 \rangle
+\int_\Om \left\{ (\nabla n_1 -n_1 \nabla v_1-\nabla n_2 +n_2 \nabla v_2) \cdot \nabla \phi_1
+(R(n_1,p_1)-R(n_2,p_2)) \phi_1 \right\}dx=0,
\end{equation*}
where $\phi_1 \in H^1_D(\Om)$. 
From the following two equalities
\begin{gather*}
n_1-n_2=n_2 \varphi,
\\
\nabla n_1 -n_1 \nabla v_1-\nabla n_2 +n_2 \nabla v_2 
=n_1(\nabla \log(1+\varphi)-\nabla \eta)+\varphi(\nabla n_2 -n_2\nabla v_2),
\end{gather*}
we see that $\varphi$ satisfies
\begin{equation}\lb{ddewph}
\begin{aligned}
&\langle (n_2\varphi)', \phi_1 \rangle
 +\int_\Om \left\{n_1(\nabla \log(1+\varphi)-\nabla \eta)+\varphi(\nabla n_2 -n_2\nabla v_2)\right\} \cdot \nabla \phi_1 dx \\
&\quad +\int_\Om (R(n_1,p_1)-R(n_2,p_2)) \phi_1 dx=0.
\end{aligned}
\end{equation}
Similarly, $\psi$ solves
\begin{equation*}
\begin{aligned}
&\langle (p_2\psi)', \phi_2 \rangle
 +\int_\Om \left\{p_1(\nabla \log(1+\psi)+\nabla \eta) + \psi (\nabla p_2 +p_2\nabla v_2)\right\} \cdot \nabla \phi_2 dx \\
&\quad +\int_\Om (R(n_1,p_1)-R(n_2,p_2)) \phi_2 dx=0,
\end{aligned}
\end{equation*}
where $\phi_2 \in H^1_D(\Om)$.
We see from \eqref{ddewv} that $\eta$ satisfies
\begin{equation}\lb{ddewe}
\ve \int_\Om \nabla \eta \cdot \nabla \phi_3 dx+\int_{\Gamma_N} b\eta \phi_3 dS
 =-\int_\Om (n_2 \varphi -p_2 \psi) \phi_3 dx
\end{equation}
for all $\phi_3 \in H^1_D(\Om)$.

Now we derive an equality on the relative entropy of solutions.
\begin{lem}\lb{Diff1}
There holds that
\begin{align}
\frac{d}{dt}\left(\int_\Omega \mathcal{E}dx 
+\int_{\Ga_N} \tilde{\mathcal{E}} dS\right)
+\int_\Omega \mathcal{D} dx
=\int_\Omega (\mathcal{K}+\mathcal{L}+\mathcal{M}) dx,
\label{eform}
\end{align}
where
\begin{align*}
\mathcal{E}&:=n_2 \int_0^\varphi \log(1+y) dy
+p_2 \int_0^\psi \log(1+y) dy
+\frac{\ve}{2}|\nabla \eta|^2,
\notag \\ 
\tilde{\mathcal{E}}&:=\frac{b}{2}\eta^2,
\notag \\ 
\mathcal{D}&:=n_1 \left| \nabla \log (1+\varphi)-\nabla \eta \right|^2 
 +p_1 \left| \nabla \log (1+\psi)+\nabla \eta \right|^2,
\notag \\ 
\mathcal{K}&:=(R(n_2,p_2)-R(n_1,p_1))\left( \log (1+\varphi) +\log (1+\psi) \right),
\notag \\
\mathcal{L}&:=R(n_2,p_2) \left( \varphi-\log (1+\varphi) +\psi-\log (1+\psi) \right),
\notag \\
\mathcal{M}&:=\varphi (\nabla n_2-n_2 \nabla v_2) \cdot \nabla \eta
 -\psi (\nabla p_2+p_2 \nabla v_2) \cdot \nabla \eta.
\notag 
\end{align*}
\end{lem}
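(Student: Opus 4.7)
The plan is to split $\int_\Om \mathcal{E}\,dx$ into the electron relative entropy $\int_\Om \mathcal E_n\,dx:=\int_\Om n_2\int_0^\varphi \log(1+y)\,dy\,dx$, the analogous hole relative entropy $\int_\Om \mathcal E_p\,dx$, and the electrostatic part $\frac{\ve}{2}\int_\Om|\nabla\eta|^2\,dx$, differentiate each, and recombine. Thanks to the Dirichlet conditions $n_i=N_b$, $p_i=P_b$, $v_i=V_b$ on $\Gamma_D$, together with the two-sided bounds \eqref{asp1}, all of $\log(1+\varphi)$, $\log(1+\psi)$, $\varphi$, $\psi$, $\eta$ vanish on $\Gamma_D$ and lie in $H^1_D(\Om)\cap L^\infty(\Om)$, so every test function used below is admissible.

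For $\mathcal E_n$, the algebraic identity $n_2\bigl((1+\varphi)\log(1+\varphi)-\varphi\bigr)=n_1\log(1+\varphi)-(n_1-n_2)$ gives the formal chain rule $\partial_t\mathcal E_n=n_1'\log(1+\varphi)-n_2'\varphi$, so that $\tfrac{d}{dt}\int_\Om\mathcal E_n\,dx=\langle n_1',\log(1+\varphi)\rangle-\langle n_2',\varphi\rangle$. I would evaluate both pairings via \eqref{ddewn} (applied to $n_1$ with $\phi_1=\log(1+\varphi)$ and to $n_2$ with $\phi_1=\varphi$), and use the two identities $n_1\nabla\log(1+\varphi)=\nabla n_1-(1+\varphi)\nabla n_2$ and $\nabla\varphi=(1+\varphi)\nabla\log(1+\varphi)$ to force the ``reference'' terms proportional to $\nabla n_2-n_2\nabla v_2$ to cancel. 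The outcome is
\begin{equation*}
\tfrac{d}{dt}\int_\Om\mathcal E_n\,dx
=-\int_\Om n_1|\nabla\log(1+\varphi)|^2\,dx
  +\int_\Om n_1\nabla\eta\cdot\nabla\log(1+\varphi)\,dx
  -\int_\Om R(n_1,p_1)\log(1+\varphi)\,dx
  +\int_\Om R(n_2,p_2)\varphi\,dx,
\end{equation*}
and the completely analogous computation for $\mathcal E_p$ yields the same formula with $n\leftrightarrow p$, $\varphi\leftrightarrow\psi$, and the sign of the mixed $\nabla\eta$-term flipped, since the hole flux carries $+p\nabla v$.

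For the electrostatic contribution, I would differentiate \eqref{ddewe} in $t$ and test with $\phi_3=\eta$, obtaining $\tfrac{d}{dt}\bigl[\tfrac{\ve}{2}\int_\Om|\nabla\eta|^2\,dx+\tfrac12\int_{\Ga_N}b\eta^2\,dS\bigr]=-\langle(n_2\varphi)',\eta\rangle+\langle(p_2\psi)',\eta\rangle$. Writing $\langle(n_2\varphi)',\eta\rangle=\langle n_1',\eta\rangle-\langle n_2',\eta\rangle$ (and similarly for $\psi$), invoking \eqref{ddewn}--\eqref{ddewp}, and decomposing each flux difference as $(\nabla n_1-n_1\nabla v_1)-(\nabla n_2-n_2\nabla v_2)=n_1(\nabla\log(1+\varphi)-\nabla\eta)+\varphi(\nabla n_2-n_2\nabla v_2)$ and its $\psi$-counterpart, the $R$-contributions cancel pairwise; what remains produces the mixed gradients $\pm\int_\Om n_1\nabla\eta\cdot\nabla\log(1+\varphi)\,dx$ and $\pm\int_\Om p_1\nabla\eta\cdot\nabla\log(1+\psi)\,dx$, the squared term $-\int_\Om(n_1+p_1)|\nabla\eta|^2\,dx$, and exactly $\int_\Om\mathcal M\,dx$.

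Summing the three contributions, the six gradient terms assemble into $-\int_\Om n_1|\nabla\log(1+\varphi)-\nabla\eta|^2+p_1|\nabla\log(1+\psi)+\nabla\eta|^2\,dx=-\int_\Om\mathcal D\,dx$ by completing the square, and the four recombination terms collapse to $\int_\Om\mathcal K+\mathcal L\,dx$ via the identity $\mathcal K+\mathcal L=R(n_2,p_2)(\varphi+\psi)-R(n_1,p_1)(\log(1+\varphi)+\log(1+\psi))$. Rearranging then yields \eqref{eform}. The main obstacle is rigorously justifying the formal chain rule for $\partial_t\mathcal E_n$ and $\partial_t\mathcal E_p$, since $n_i'$ and $p_i'$ belong only to $L^2(J;H^1_D(\Om)^*)$; as in the proof of Lemma~\ref{edelem}, I would handle this by mollifying in the time variable, performing the computations on the smooth approximations, and passing to the limit using the uniform $L^\infty$-bounds \eqref{asp1}.
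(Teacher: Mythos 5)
Your proposal is correct and is essentially the same computation as the paper's: the paper tests the difference equation \eqref{ddewph} with $\phi_1=\log(1+\varphi)-\eta$ (and the hole analogue with $\phi_2=\log(1+\psi)+\eta$), rewrites $(n_2\varphi)'\log(1+\varphi)$ via the same relative-entropy chain rule you use, and converts the $\langle(n_2\varphi-p_2\psi)',\eta\rangle$ pairing into the electrostatic energy derivative through \eqref{ddewe} — exactly your three ingredients, just bundled into the single test of \eqref{ddewph} rather than into three separate pieces of $\int_\Omega\mathcal{E}\,dx$. The paper also handles the lack of time-regularity by mollifying in $t$, as you propose.
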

\begin{proof}
It suffices to show
\begin{align*}
\left.\left(\int_\Omega \mathcal{E}dx 
+\int_{\Ga_N} \tilde{\mathcal{E}} dS\right)\right|_{\tilde{t}}^{t}
+\int_{\tilde{t}}^{t} \int_\Omega \mathcal{D} dxdt
=\int_{\tilde{t}}^{t} \int_\Omega (\mathcal{K}+\mathcal{L}+\mathcal{M}) dxdt,
\end{align*}
since this gives the absolute continuity of 
$\int_\Omega \mathcal{E}dx +\int_{\Ga_N} \tilde{\mathcal{E}}dS$.

To make the following computation rigorous, 
we use a mollifier with respect to the time variable $t$ 
due to the insufficiency of the regularity of solutions.
We omit the argument since it is standard.

Choose $\phi_1=\log(1+\varphi)-\eta$ in \er{ddewph} to obtain
\begin{align}
&\langle n_1'-n_2',\log(1+\varphi) \rangle -\langle n_1'-n_2',\eta \rangle 
\notag\\
&\quad +\int_\Om \left\{n_1(\nabla \log(1+\varphi)-\nabla \eta)+\varphi(\nabla n_2 -n_2\nabla v_2)\right\} \cdot \nabla (\log(1+\varphi)-\eta)dx
\notag\\
&\qquad +\int_\Om (R(n_1,p_1)-R(n_2,p_2))(\log(1+\varphi)-\eta) dx=0. 
\lb{P1}
\end{align}
Let us rewrite the first and third terms of the left-hand side.
Noting $n_1-n_2=n_2\varphi$, we have
\begin{align*}
(n_1'-n_2')\log(1+\varphi) &= (n_2\varphi)' \log(1+\varphi)
\\
&=\left(\int_0^{n_2\varphi} \log\left(1+\frac{y}{n_2}\right)dy\right)' 
-\int_0^{n_2\varphi} \left(\log\left(1+\frac{y}{n_2}\right)\right)'dy 
\\
&=\left(n_2\int_0^{\varphi} \log\left(1+y\right)dy \right)'
+n_2'(\varphi-\log(1+\varphi)).
\end{align*}
Then, using \er{ddewn}, we arrive at
\begin{align*}
&\langle n_1'-n_2',\log(1+\varphi) \rangle \\
&=\left\{\int_\Om n_2\left(\int_0^{\varphi} \log\left(1+y\right)dy\right) dx\right\}'
-\int_\Om (\nabla n_2 -n_2\nabla v_2)\cdot\nabla(\varphi-\log(1+\varphi))dx
\\
&\quad -\int_\Om R(n_2,p_2) (\varphi-\log(1+\varphi))dx.
\end{align*}
One can rewrite the integrand of the third term on the left-hand side of \eqref{P1} as
\begin{align*}
&\left\{n_1(\nabla \log(1+\varphi)-\nabla \eta)+\varphi(\nabla n_2 -n_2\nabla v_2)\right\} \cdot \nabla (\log(1+\varphi)-\eta) 
\\
&=n_1|\nabla \log(1+\varphi)-\nabla \eta|^2
+(\nabla n_2 -n_2\nabla v_2)\cdot\nabla(\varphi-\log(1+\varphi))
-(\nabla n_2 -n_2\nabla v_2)\cdot\varphi\nabla\eta.
\end{align*}
From these, we obtain 
\begin{align}
&\left\{\int_\Om n_2\left(\int_0^{\varphi} \log\left(1+y\right)dy\right) dx\right\}'
-\langle n_1'-n_2',\eta \rangle
+\int_\Om n_1|\nabla \log(1+\varphi)-\nabla \eta|^2 dx
\notag \\
&=\int_\Om \left\{ R(n_2,p_2)\cdot\nabla(\varphi-\log(1+\varphi))
+(\nabla n_2 -n_2\nabla v_2)\cdot\varphi\nabla\eta \right.
\notag \\
&\qquad \qquad \left. -(R(n_1,p_1)-R(n_2,p_2))(\log(1+\varphi)-\eta) \right\}dx.
\lb{eform1}
\end{align}
Similarly, 
\begin{align}
&\left\{\int_\Om p_2\left(\int_0^{\psi} \log\left(1+y\right)dy\right) dx\right\}'
+\langle p_1'-p_2',\eta \rangle
+\int_\Om p_1|\nabla \log(1+\psi)+\nabla \eta|^2 dx
\notag \\
&=\int_\Om \left\{R(n_2,p_2)\cdot\nabla(\psi-\log(1+\psi))
-(\nabla p_2 +p_2\nabla v_2)\cdot\psi\nabla\eta \right.
\notag \\
&\qquad \qquad \left. -(R(n_1,p_1)-R(n_2,p_2))(\log(1+\psi)+\eta)\right\}dx.
\lb{eform2}
\end{align}
Note that \eqref{ddewe} yields
\begin{align}
\langle n_1'-n_2',\eta \rangle 
-\langle p_1'-p_2',\eta \rangle 
=\langle (n_2\varphi-p_2\psi)',\eta \rangle 
=- \left(\int_\Om \frac{\ve}{2}|\nabla\eta|^2dx
+\int_{\Ga_N} \frac{b}{2}\eta^2dS\right)'.
\lb{eform3}
\end{align}
Summing up \er{eform1}--\er{eform3} and integrating over $[\tilde{t},t]$
complete the proof.
\end{proof}


We remark that in the case that $(n_2,p_2,v_2)$ is a stationary solution $(N,P,V)$ satisfying \eqref{noflow}, 
the term $\cal K$ is nonpositive and the terms $\cal L$ and $\cal M$ are zero. 
Therefore it is easier to show its global stability. 
Among these terms, $\cal M$ is problematic to handle if $(n_2,p_2,v_2)$ does not satisfy \eqref{noflow}.
For this reason, we establish a new inequality in the following lemma.


\begin{lem}\lb{mpees}
There is a constant $C=C(\Omega,\Gamma_D,r,\ve)>0$ such that
\begin{equation}\lb{mpeineq}
|\varphi \nabla \eta |_2 +|\psi \nabla \eta |_2
 \le C(|n_2 \varphi |_2+|p_2 \psi |_2) (|\nabla \varphi |_2+|\nabla \psi |_2).
\end{equation}
\end{lem}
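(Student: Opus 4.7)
The plan is to derive each piece of \eqref{mpeineq} separately by testing the weak equation \eqref{ddewe} for $\eta$ against $\phi_3=\eta\varphi^2$ (and later $\phi_3=\eta\psi^2$). This choice is admissible in $H^1_D(\Om)$: since $n_1=n_2=N_b$ and $v_1=v_2=V_b$ on $\Gamma_D$, both $\varphi$ and $\eta$ vanish there, and the bounds in \eqref{asp1} together with the $L^\infty$-control on $\eta$ (justified below) guarantee $\nabla(\eta\varphi^2)=\varphi^2\nabla\eta+2\eta\varphi\nabla\varphi\in L^2(\Om)$.

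Plugging $\phi_3=\eta\varphi^2$ into \eqref{ddewe}, expanding, and discarding the non-negative boundary integral $\int_{\Gamma_N}b\eta^2\varphi^2\,dS$, I expect to obtain
\begin{equation*}
\ve\int_\Om\varphi^2|\nabla\eta|^2\,dx
\le -2\ve\int_\Om\eta\varphi\,\nabla\eta\cdot\nabla\varphi\,dx
-\int_\Om(n_2\varphi-p_2\psi)\,\eta\varphi^2\,dx.
\end{equation*}
Young's inequality absorbs half of the cross term into the left-hand side, and H\"older's inequality bounds the remaining integral, producing
\begin{equation*}
\tfrac{\ve}{2}\int_\Om\varphi^2|\nabla\eta|^2\,dx
\le 2\ve\,|\eta|_\infty^2\,|\nabla\varphi|_2^2
+|\eta|_\infty\,|n_2\varphi-p_2\psi|_2\,|\varphi|_4^2.
\end{equation*}
Two ingredients then close the argument: Lemma~\ref{eespro} applied to \eqref{ddewe} with $q=2$, $W_b=0$, $\tilde g=0$ yields $|\eta|_\infty\le C(|n_2\varphi|_2+|p_2\psi|_2)$, while the Sobolev embedding $H^1\hookrightarrow L^4$ combined with the Poincar\'e inequality (applicable since $\varphi\in H^1_D(\Om)$) gives $|\varphi|_4^2\le C|\nabla\varphi|_2^2$. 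Substituting yields $|\varphi\nabla\eta|_2\le C(|n_2\varphi|_2+|p_2\psi|_2)|\nabla\varphi|_2$, and the symmetric choice $\phi_3=\eta\psi^2$ delivers the companion inequality for $\psi$; adding the two gives \eqref{mpeineq}.

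The main obstacle, and the reason an elementary bound such as $|\varphi\nabla\eta|_2\le|\varphi|_\infty|\nabla\eta|_2$ is inadequate, is that the right-hand side must carry a factor of $|\nabla\varphi|_2+|\nabla\psi|_2$ in order for Proposition~\ref{abDiff} to close. The quadratic test function $\eta\varphi^2$ is precisely what couples one derivative of $\varphi$ into the estimate, while the low regularity of $\eta$ (only $H^1_D$) is compensated by its $L^\infty$-control coming from the elliptic regularity result of Lemma~\ref{eespro}.
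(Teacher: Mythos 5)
Your proof is correct and follows essentially the same approach as the paper: the paper tests \eqref{ddewe} against $\phi_3=(\varphi^2+\psi^2)\eta$ in one shot, while you test against $\eta\varphi^2$ and $\eta\psi^2$ separately and then add, which is a trivial bookkeeping difference. All the key ingredients --- the $L^\infty$ bound on $\eta$ from Lemma~\ref{eespro}, the Schwarz/Young absorption of the cross term, and the Sobolev--Poincar\'e bound $|\varphi|_4^2\le C|\nabla\varphi|_2^2$ --- coincide with the paper's argument.
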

\begin{proof}
We note that by \eqref{ddewe},
\begin{equation}\label{elies}
|\eta|_\infty \le C|n_2 \varphi-p_2 \psi|_2.
\end{equation}
This follows from \eqref{ell-Linf}
with $h=(n_2 \varphi-p_2 \psi)/\ve$, $W_b=0$, $\tilde b=b/\ve$ and $\tilde g=0$.

Let us show \er{mpeineq}.
Taking $\phi_3=(\varphi^2+\psi^2) \eta$ in \eqref{ddewe} yields
\begin{align*}
&\ve \left( |\varphi \nabla \eta|_2^2 +|\psi \nabla \eta|_2^2 \right)
 +\int_{\Gamma_N} b(\varphi^2+\psi^2) \eta^2 dS \\
&=-2\ve \int_\Omega \eta (\varphi \nabla \varphi +\psi \nabla \psi )\cdot \nabla \eta dx
 -\int_\Om (n_2 \varphi -p_2 \psi) (\varphi^2+\psi^2) \eta dx.
\end{align*}
By the Schwarz inequality and \eqref{elies},
the first term of the right-hand side of this equality is estimated as
\begin{align*}
-2\ve \int_\Omega \eta (\varphi \nabla \varphi +\psi \nabla \psi )\cdot \nabla \eta
 &\le 2\ve \left( |\eta \nabla \varphi |_2^2 +|\eta \nabla \psi |_2^2 \right)
  +\frac{\ve}{2} \left( |\varphi \nabla \eta |_2^2 +|\psi \nabla \eta |_2^2 \right) \\
&\le 2\ve |\eta |_\infty^2 \left( |\nabla \varphi |_2^2 +|\nabla \psi |_2^2 \right)
 +\frac{\ve}{2} \left( |\varphi \nabla \eta |_2^2 +|\psi \nabla \eta |_2^2 \right) \\
&\le C\left( |n_2 \varphi |_2^2 +|p_2 \psi |_2^2 \right)
 \left( |\nabla \varphi |_2^2 +|\nabla \psi |_2^2 \right)
  +\frac{\ve}{2} \left( |\varphi \nabla \eta |_2^2 +|\psi \nabla \eta |_2^2 \right).
\end{align*}
The second term is handled as 
\begin{align*}
-\int_\Om (n_2 \varphi -p_2 \psi) (\varphi^2+\psi^2) \eta dx
 &\le |\eta|_\infty |n_2 \varphi-p_2 \psi|_2
  \left( |\varphi |_4^2 +|\psi |_4^2 \right) \\
&\le C\left( |n_2 \varphi |_2^2 +|p_2 \psi |_2^2 \right)
 \left( |\nabla \varphi |_2^2 +|\nabla \psi |_2^2 \right),
\end{align*}
where we have used 
\eqref{elies} and the Sobolev and Poincar\'{e} inequalities in deriving the last inequality.
Thus we obtain \eqref{mpeineq}.
\end{proof}

We are now in a position to prove Proposition~\ref{abDiff}.
\begin{proof}[Proof of Proposition~\ref{abDiff}]
In the proof, 
$c$ and $C$ stand for generic positive constants
depending only on $c_0$, $C_0$, $\Omega$, $\Gamma_D$, $\rho$, $r$, $\ve$ and $c_R$.
Define
\begin{equation*}
a=a(t):=|(n_2 p_2-1)(t)|_2^2 +|(\nabla n_2-n_2\nabla v_2)(t)|_2^2
 +|(\nabla p_2+p_2\nabla v_2)(t)|_2^2.
\end{equation*}
We claim that the desired inequalities are derived from the inequalities
\begin{gather} 
\int_\Om \mathcal{D}dx 
\geq c \left( |\nabla \varphi|_2^2+|\nabla \psi|_2^2\right),
\lb{Dineqfb}\\ 
\int_\Om (\mathcal{K}+\mathcal{L})dx \leq 
\mu \left( |\nabla \varphi|_2^2+|\nabla \psi|_2^2\right)
+\frac{Ca}{\mu^3} \left( |\varphi|_2^2+|\psi|_2^2\right),
\lb{KLineqfa}\\
\int_\Om \mathcal{M}dx \leq 
\mu \left( |\nabla \varphi|_2^2+|\nabla \psi|_2^2\right)
+\frac{Ca}{\mu} \left( |\varphi|_2^2+|\psi|_2^2\right),
\lb{Mineqfa}
\\
c\left( |\varphi|_2^2+|\psi|_2^2+\| \eta \|_1^2 \right) \leq
\int_\Omega \mathcal{E}dx +\int_{\Ga_N} \tilde{\mathcal{E}} dS \leq 
C \left( |\nabla \varphi|_2^2+|\nabla \psi|_2^2\right),
\lb{Eineqfa}
\end{gather}
where $\mu>0$ is an arbitrary number.
Let us verify this claim.
Substituting \er{Dineqfb}--\er{Mineqfa} into \er{eform}
and taking $\mu$ small enough,
we deduce that
\begin{equation*}
\frac{d}{dt}\left(\int_\Omega \mathcal{E}dx 
 +\int_{\Ga_N} \tilde{\mathcal{E}} dS\right)
  +c(|\nabla \varphi|_2^2+|\nabla \psi|_2^2 )
   \le C a (|\varphi|_2^2+|\psi|_2^2).
\end{equation*}
Applying \er{Eineqfa} to this inequality,
we have
\begin{align}\lb{esdd-1}
\frac{d}{dt}\left(\int_\Omega \mathcal{E}dx 
+\int_{\Ga_N} \tilde{\mathcal{E}} dS\right)
+ (c-Ca) \left( \int_\Omega\mathcal{E} dx 
+\int_{\Ga_N} \tilde{\mathcal{E}} dS \right)
+c \left( |\nabla \varphi|_2^2+|\nabla \psi|_2^2\right) 
\leq 0.
\end{align}
We now use \eqref{emineq} with $(n,p,v)=(n_2,p_2,v_2)$.
Integrating \eqref{emineq} and applying \eqref{asp1} give
\begin{equation}\label{esddta}
\int_s^t a(\tau)d\tau \le C+C\de (t-s)
\end{equation}
for all $t \ge s \ge \tilde t$.
Multiply \eqref{esdd-1} by $\exp(\int_s^t c-Ca(\tau) d\tau)$,
integrate the result and then use \eqref{esddta} to obtain
\begin{align}
&e^{c(t-\tilde t)} \left(\int_\Omega \mathcal{E} (t)dx+\int_{\Ga_N} \tilde{\mathcal{E}} (t)dS \right) 
 +\int_{\tilde t}^t e^{c(s-\tilde{t})} \left( |\nabla \varphi(s)|_2^2+|\nabla \psi(s)|_2^2 \right) ds 
\nonumber \\
&\le C\left( \int_\Omega \mathcal{E} (\tilde t)dx
 +\int_{\Ga_N} \tilde{\mathcal{E}} (\tilde t)dS \right) 
\label{esdd0}
\end{align}
provided that $\de$ is smaller than some number
$\de_0=\de_0(c_0,C_0,\Omega,\Gamma_D,\rho,r,\ve,c_R)>0$.
By \eqref{asp1},
we have
\begin{gather}
\int_\Omega \mathcal{E} (\tilde t)dx+\int_{\Ga_N} \tilde{\mathcal{E}} (\tilde t)dS \le C,
\label{esdd1} \\
|\nabla \varphi|_2^2+|\nabla \psi|_2^2
\ge c\left( |\nabla \log (1+\varphi)|_2^2+|\nabla \log (1+\psi)|_2^2\right).
\label{esdd2}
\end{gather}
Plugging \er{Eineqfa}, \eqref{esdd1} and \eqref{esdd2} into \eqref{esdd0} 
yields \eqref{dgsineq} and \eqref{ddgsineq} as claimed.

We complete the proof by showing \eqref{Dineqfb}--\eqref{Eineqfa}.
First let us show \er{Dineqfb}.
From the inequality $|a-b|^2 \ge |a|^2/2-|b|^2$ $(a,b \in \mathbb{R}^3)$ 
and \er{asp1}, we have
\begin{align}
\int_\Omega \mathcal{D} dx &\ge \frac{1}{2}\int_\Omega \left( \frac{n_2^2}{n_1}|\nabla \varphi |^2
 +\frac{p_2^2}{p_1}|\nabla \psi |^2 \right) dx
  -\int_\Omega (n_1+p_1)|\nabla \eta |^2 dx \nonumber \\
&\ge c\int_\Omega ( |\nabla \varphi |^2 +|\nabla \psi |^2 ) dx
 -C\int_\Omega |\nabla \eta |^2 dx. \lb{mdles}
\end{align}
To estimate $|\nabla \eta|_2$,
we take $\phi_3=\eta$ in \eqref{ddewe}.
Then
\begin{equation}\label{eliese}
\varepsilon \int_\Omega |\nabla \eta|^2dx +\int_{\Gamma_N} b\eta^2dS
 =\int_\Omega (-n_2\varphi +p_2\psi) \eta dx.
\end{equation}
By the fact that $a \log (1+a) \ge 0$ $(a>-1)$,
the Schwarz inequality and \er{asp1},
the integrand of the right-hand side of this equality is estimated as
\begin{align*}
(-n_2\varphi +p_2\psi) \eta &\le n_2 \varphi \left( \log (1+\varphi) -\eta \right)
 +p_2 \psi \left( \log (1+\psi) +\eta \right) \\
&\le \tilde \mu (\varphi^2+\psi^2)
 +\frac{C}{\tilde \mu} \left\{  \left( \log (1+\varphi) -\eta \right)^2 
  +\left( \log (1+\psi) +\eta \right)^2 \right\},
\end{align*}
where $\tilde \mu>0$ is an arbitrary constant.
Plugging this into \eqref{eliese} and then using the Poincar\'{e} inequality and \er{asp1},
we deduce that
\begin{align*}
&\int_\Omega |\nabla \eta|^2dx
 \\
 &\le C\tilde \mu \int_\Omega (|\nabla \varphi|^2 +|\nabla \psi|^2) dx 
  +\frac{C}{\tilde \mu} \int_\Om \left\{ \left| \nabla \log (1+\varphi) -\nabla \eta \right|^2 
   +\left| \nabla \log (1+\psi) +\nabla \eta \right|^2 \right\} dx \\
&\le C\tilde \mu \int_\Omega (|\nabla \varphi|^2 +|\nabla \psi|^2) dx 
 +\frac{C}{\tilde \mu} \int_\Omega \mathcal{D} dx.
\end{align*}
Substituting this into \eqref{mdles} and
choosing $\tilde \mu$ appropriately small give \eqref{Dineqfb}.

Next we derive \er{KLineqfa} and \er{Mineqfa}.
Note that
\begin{equation*}
\mathcal{K}=-c_R \frac{n_1p_1-n_2p_2}{n_1+p_1+2}\log \frac{n_1p_1}{n_2p_2}
 +c_R \frac{(n_2p_2-1)(n_2\varphi+p_2\psi)}{(n_1+p_1+2)(n_2+p_2+2)}
  (\log (1+\varphi)+ \log (1+\psi)).
\end{equation*}
Since the first term of the right-hand side of this equality is nonpositive,
we have
\begin{equation*}
\mathcal{K} \le c_R \frac{(n_2p_2-1)(n_2\varphi+p_2\psi)}{(n_1+p_1+2)(n_2+p_2+2)}
 (\log (1+\varphi)+ \log (1+\psi)) \le C|n_2p_2-1|(\varphi^2+\psi^2).
\end{equation*}
It is elementary to show that $\mathcal{L} \le C|n_2p_2-1| (\varphi^2 +\psi^2)$.
Hence, by the H\"older, Sobolev, Poincar\'e and Young inequalities, we have
\begin{align*}
\int_\Omega (\mathcal{K}+\mathcal{L}) dx 
&\le |n_2p_2-1|_2 \left( |\varphi|_2^{1/2} |\varphi|_6^{3/2}
+|\psi|_2^{1/2} |\psi|_6^{3/2}\right) 
\\
&\le C|n_2p_2-1|_2 \left( |\varphi|_2^{1/2} |\nabla \varphi|_2^{3/2}
+|\psi|_2^{1/2} |\nabla \psi|_2^{3/2} \right)
\\
&\le \mu \left( |\nabla \varphi|_2^{2}+|\nabla \psi|_2^{2} \right)
+\frac{C}{\mu^3} |n_2p_2-1|_2^3 \left( |\varphi|_2^2+|\psi|_2^2 \right).
\end{align*}
Here $\mu>0$ is an arbitrary number.
Owing to \er{asp1}, the last term can be estimated as
\begin{equation*}
|n_2p_2-1|_2^3 \left( |\varphi|_2^2+|\psi|_2^2 \right)
 \le C|n_2p_2-1|_2^2 \left( |n_2 \varphi|_2^2+|p_2 \psi|_2^2 \right).
\end{equation*}
Therefore, \er{KLineqfa} is proved.

The inequality \er{Mineqfa} is verified 
by applying the H\"{o}lder and Schwarz inequalities together with \eqref{mpeineq} as
\begin{align*}
\int_\Omega \mathcal{M} dx 
 &\le |\nabla n_2-n_2 \nabla v_2|_2 |\varphi \nabla \eta |_2
  +|\nabla p_2+p_2 \nabla v_2 |_2 |\psi \nabla \eta |_2 \\
&\le Ca^{1/2} \left( |n_2 \varphi |_2+|p_2 \psi |_2 \right)
 \left( |\nabla \varphi |_2+|\nabla \psi |_2 \right) \\
&\le \mu \left( |\nabla \varphi |_2^2+|\nabla \psi |_2^2 \right)
 +\frac{Ca}{\mu}\left( |n_2 \varphi |_2^2+|p_2 \psi |_2^2 \right).
\end{align*}

Finally we prove \er{Eineqfa}. 
It is easily seen from \er{asp1} and \eqref{eliese} that
\begin{gather*}
c (\varphi^2 +\psi^2 ) \le n_2 \int_0^\varphi \log(1+y) dy
 +p_2 \int_0^\psi \log(1+y) dy \le C (\varphi^2 +\psi^2), \\
\varepsilon \int_\Omega |\nabla \eta|^2dx +\int_{\Gamma_N} b\eta^2dS
 \le C\left( |\varphi|_2^2+|\psi|_2^2\right).
\end{gather*}
Hence we have
\begin{equation*}
c\left( |\varphi|_2^2+|\psi|_2^2+|\nabla \eta|_2^2 \right) 
\leq \int_\Omega \mathcal{E} dx +\int_{\Ga_N} \tilde{\mathcal{E}} dS 
\leq C\left( |\varphi|_2^2+|\psi|_2^2 \right).
\end{equation*}
We thus obtain \er{Eineqfa} by applying the Poincar\'e inequality 
to the right-hand side of this inequality.
The proof is complete.
\end{proof}

\section{Time-periodic solutions}\lb{TPS}
This section is devoted to the proof of Theorem~\ref{tpsthm}
stating the unique existence and global stability of time-periodic solutions.

\begin{proof}[Proof of Theorem~\ref{tpsthm}]
Throughout the proof, $c$ and $C$ denote generic positive constants
depending only on $c_0$, $C_0$, $\Omega$, $\Gamma_D$, $\rho$, $r$, $\ve$ and $c_R$.
Furthermore, we assume that $\theta<\theta_0$ and $\de<\min\{1,\de_0\}$, 
where $\theta_0$ and $\de_0$ are given 
in Theorem~\ref{uesthm} and Proposition~\ref{abDiff}, respectively.

First we show the uniqueness of time-periodic solutions.
Suppose that $(n_{*1},p_{*1},v_{*1})$ and $(n_{*2},p_{*2},v_{*2})$ 
are time-periodic solutions of \eqref{ddeq}.
Then Theorem~\ref{uesthm} ensures that
\begin{equation*}
\hat C^{-1} \le n_{*1},p_{*1},n_{*2},p_{*2} \le \hat C \quad \text{in } \bbr \times \Om,
\end{equation*} 
where $\hat C=\hat C(c_0,C_0,\Omega,\Gamma_D,\rho, r,\ve,c_R)$ 
is the constant being in \eqref{gsulb}.
Hence we see from Proposition~\ref{abDiff} that 
\begin{equation*}
|(n_{*1}-n_{*2})(t)|_2+|(p_{*1}-p_{*2})(t)|_2+\| (v_{*1}-v_{*2})(t) \|_1 \le C e^{-c (t-\tilde t)}
\end{equation*}
for all $t \ge \tilde t$, where $\tilde t \in \bbr$ can be chosen arbitrarily.
By letting $\tilde t \to -\infty$,
we obtain $(n_{*1},p_{*1},v_{*1})=(n_{*2},p_{*2},v_{*2})$, which establishes the uniqueness.

Next we investigate the existence of time-periodic solutions.
To this end, we fix a solution $(n,p,v)$ of \eqref{ddeq}.
From Theorem~\ref{uesthm}, 
we can choose $t_0$ such that
\begin{equation}\lb{npulb}
(2\hat C)^{-1} \le n,p \le 2\hat C \quad \text{in } (t_0,\infty) \times \Om.
\end{equation} 
Take an integer $k_0$ satisfying $k_0 T_*>t_0$
and define a sequence $\{ (n_k,p_k,v_k)\}_{k=0}^\infty$ by
\begin{equation*}
(n_k,p_k,v_k)(t,x):=(n,p,v)(t+(k_0+k)T_*,x),
 \quad (t,x) \in [-kT_*,\infty) \times \overline{\Om}.
\end{equation*}
Owing to \eqref{npulb} and the fact that $g$ and $V_{b}$ are periodic with period $T_*$, 
we see that $(n_k,p_k,v_k)$ solves \eqref{ddeq} for $I=(-kT_*,\infty)$ and satisfies
\begin{equation}\lb{npkulb}
(2\hat C)^{-1} \le n_k,p_k \le 2\hat C \quad \text{in } (-kT_*,\infty) \times \Om.
\end{equation} 
We can apply Proposition~\ref{abDiff}
with $(n_1,p_1,v_1)=(n_l,p_l,v_l)$, $(n_2,p_2,v_2)=(n_k,p_k,v_k)$ and $\tilde t=-kT_*$
to obtain
\begin{gather}
|(n_l-n_k)(t)|_2+|(p_l-p_k)(t)|_2+\| (v_l-v_k)(t) \|_1 \le C e^{-c (t+kT_*)},
\lb{dbnlk} \\
\int_{-kT_*}^t e^{c (s+kT_*)} \left( \left| \nabla \log \frac{n_l}{n_k} (s)\right|_2^2
+\left| \nabla \log \frac{p_l}{p_k}(s)\right|_2^2 \right) ds \le C,
\lb{drnpe}
\end{gather}
where $l>k$.
In particular, from \eqref{dbnlk}, there exists 
$(n_*,p_*,v_*) \in C(\bbr;L^2(\Om)) \times C(\bbr;L^2(\Om)) \times C(\bbr;H_D^1(\Om))$
such that
\begin{equation}\lb{converge1}
n_k \to n_*, p_k \to p_* \text{ in } C_{\text{loc}} (\bbr;L^2(\Om)), 
\quad
v_k \to v_* \text{ in } C_{\text{loc}} (\bbr;H^1(\Om))
\end{equation}
as $k \to \infty$.
Note that the limit $(n_*,p_*,v_*)$ is independent of the choice of $(n,p,v)$
used to define the sequence $(n_k,p_k,v_k)$,
since we have shown the uniqueness of time-periodic solutions.

Let us prove that $(n_*,p_*,v_*)$ is a time-periodic solution of \eqref{ddeq}
by checking the conditions (i)--(iv) in Definition~\ref{DOS}.
We see from \eqref{npkulb} and \eqref{converge1} that
\begin{equation*}
(2\hat C)^{-1} \le n_*,p_* \le 2\hat C \quad \text{in } \bbr \times \Om,
\end{equation*} 
which particularly gives the condition (ii).
By the definition of $(n_k,p_k,v_k)$, 
we have 
\begin{equation*}
(n_k,p_k,v_k)(t+T_*,x)=(n_{k+1},p_{k+1},v_{k+1})(t,x).
\end{equation*}
Hence letting $k \to \infty$ yields the condition (iv).
To check the conditions (i) and (iii),
we show that
\begin{equation}\lb{converge2}
n_k\nabla v_k \to n_*\nabla v_*, \
 p_k\nabla v_k \to p_*\nabla v_*, \
  R(n_k,p_k) \to R(n_*,p_*) \text{ in } L^2_{\text{loc}}(\bbr;L^2(\Om))
\end{equation}
as $k \to \infty$.
The convergence of $\{ n_k\nabla v_k\}$ follows from 
Lemma~\ref{A4} with $f_k=n_k$ and $g_k=\nabla v_k$.
In the same way,
we have $p_k\nabla v_k \to p_*\nabla v_*$.
By a simple calculation, one can check that 
$|\D R/\D n(n.p)|$, $|\D R/\D p(n.p)| \le c_R$,
and hence $|R(n_k,p_k)-R(n_*,p_*)| \le c_R (|n_k-n_*|+|p_k-p_*|)$.
This inequality and \eqref{converge1} imply that 
$R(n_k,p_k) \to R(n_*,p_*)$ in $L^2_{\text{loc}}(\bbr;L^2(\Om))$.
Thus $\eqref{converge2}$ is verified.
By \eqref{drnpe},
we deduce that $\{ \nabla \log n_k \}$ and $\{ \nabla \log p_k \}$ 
are Cauchy sequences in $L^2(J;L^2(\Om))$ for any bounded interval $J \subset \bbr$.
From this, \eqref{npkulb} and \eqref{converge1},
we can apply Lemma~\ref{A4} to conclude that
\begin{equation}\label{converge3}
\{ \nabla n_k \}=\{ n_k \nabla \log n_k \} \text{ and } \{ \nabla p_k \}=\{ p_k \nabla \log p_k \}
 \text{ are convergent in } L^2_{\text{loc}} (\bbr;L^2(\Om)).
\end{equation}
Note that $n_l-n_k$ satisfies
\begin{align*}
\langle n_l'-n_k', \phi_1 \rangle
 =-\int_\Om \left\{ (\nabla n_l -\nabla n_k) \cdot \nabla \phi_1
  -(n_l \nabla v_l -n_k \nabla v_k) \cdot \nabla \phi_1
   +(R(n_1,p_1)-R(n_2,p_2)) \phi_1 \right\} dx
\end{align*}
for all $\phi_1$.
This together with \eqref{converge2} and \eqref{converge3} gives
\begin{equation*}
\|n_l'-n_k'\|_{H^1_D(\Om)^*} \le |\nabla n_l-\nabla n_k|_2
 +|n_l \nabla v_l -n_k \nabla v_k|_2 +|R(n_l,p_l)-R(n_k,p_k)|_2
  \to 0 \quad (k,l \to \infty),
\end{equation*}
and therefore
\begin{equation}\label{converge4}
\{n_k'\} \text{ and } \{ p_k' \} \text{ are convergent in } L^2_{\text{loc}}(\bbr;H^1_D(\Om)^*).
\end{equation}
From \eqref{converge1}, \eqref{converge3} and \eqref{converge4}, 
we see that derivatives
$\nabla n_*,\nabla p_* \in L^2_{\text{loc}}(\bbr;L^2(\Om))$
and $n_*',p_*' \in L^2_{\text{loc}}(\bbr;$ $H^1_D(\Om)^*)$ exist, and
\begin{equation}\label{converge5}
\nabla n_k \to \nabla n_*, \nabla p_k \to \nabla p_*
 \text{ in } L^2_{\text{loc}} (\bbr;L^2(\Om)), \quad
  n_k' \to n_*', p_k' \to p_*' \text{ in } L^2_{\text{loc}} (\bbr;H^1_D(\Om)^*)
\end{equation}
as $k \to \infty$.
The condition (i) is therefore verified.
Furthermore, from \eqref{converge1}, \eqref{converge2}, \eqref{converge5} 
and the fact that $(n_k,p_k,v_k)$ satisfies the condition (iii),
we see that $(n_*,p_*,v_*)$ also satisfies the condition (iii).
Consequently, we have proved the existence of time-periodic solutions.

It remains to show \eqref{contps}.
By taking $k=0$ and letting $l \to \infty$ in \er{dbnlk},
we have
\begin{equation*}
|n(t+k_0 T_*)-n_*(t)|_2+|p(t+k_0 T_*)-p_*(t)|_2+\|v(t+k_0 T_*)-v_*(t)\|_1 \leq C e^{-c t}.
\end{equation*}
This together with the fact that $(n_*,p_*,v_*)$ is periodic with period $T_*$ gives
\begin{equation*}
|(n-n_*)(t)|_2+|(p-p_*)(t)|_2+\|(v-v_*)(t)\|_1 \leq C e^{-c t}.
\end{equation*}
We thus obtain \eqref{contps},
and the proof is complete. 
\end{proof}

\begin{appendix}
\section{Appendix}

This section provides the proofs of Lemmas \ref{eespro}--\ref{A4}.

\begin{proof}[Proof of Lemma \ref{eespro}]
The inequality \eqref{ell-Linf} can be verified by the same argument
as in \cite[Theorem~2, Remark~6]{GG86} or \cite[Theorem~2.5]{D},
and therefore we omit its proof.

It remains to show \eqref{ell-W11Lp}.
In what follows, $C$ denotes a generic positive constant
depending only on $\Omega,\Gamma_D,\al$ and $\be$.
We set ${\cal W}:=w-W_{b}$ and 
$k:=|h|_{1}+|\tilde g|_{1,\Gamma_{N}}+(1+|\tilde b|_{4/3,\Gamma_{N}})\|W_{b}\|_{1}$.
It suffices to prove that
\begin{equation}\label{W11Lp2}
|\nabla {\cal W}|_\al \le Ck
\quad \mbox{and} \quad
|{\cal W}|_{\be,\Gamma_N} \le Ck.
\end{equation}
By the condition $1 \le \al<3/2$,
we can take $\ga>0$ such that
\begin{equation*}
\kappa:=\frac{(1+\ga) \al}{2-\al}<3.
\end{equation*} 
Let $\kappa'>3/2$ and $\be'>2$ 
denote the H\"{o}lder conjugates of $\kappa$ and $\be$, respectively.
We first verify that for all $F \in L^{\kappa'}(\Om)$ and $G \in L^{\be'}(\Gamma_{N})$,
\begin{equation}
\left| \int_\Omega F {\cal W} dx+\int_{\Gamma_N} G {\cal W} dS \right|
 \leq Ck(|F|_{\kappa'}+|G|_{\be',\Gamma_{N}}).
\label{FGineq}
\end{equation}
To this end, we employ a solution  $\varphi \in H^{1}_{D}(\Omega)$ of the problem
\begin{gather}\label{varphi0}
\int_\Omega \nabla \varphi \cdot \nabla \tilde \phi dx
 +\int_{\Gamma_N} \tilde{b} \varphi \tilde \phi dS
  =\int_\Omega F \tilde \phi dx+\int_{\Gamma_N} G \tilde \phi dS
   \quad \text{for $\tilde \phi \in H^{1}_{D}(\Omega)$}.
\end{gather}
Owing to \eqref{ell-H1} and \eqref{ell-Linf}, 
we see that $\varphi$ satisfies
\begin{gather}\label{varphi1}
\|\varphi\|_{1}+|\varphi|_{\infty} \leq C(|F|_{\kappa'}+|G|_{\be',\Gamma_{N}}).
\end{gather}
Taking $\phi=\varphi$ in \eqref{weakeq} and $\tilde \phi={\cal W}$ in \eqref{varphi0},
and then combining the resulting equalities, 
we deduce that
\begin{align*}
\left| \int_\Omega F {\cal W} dx+\int_{\Gamma_N} G {\cal W} dS\right|
&=\left| -\int_\Om \nabla W_b \cdot \nabla \varphi dx -\int_\Om h\varphi dx 
 -\int_{\Gamma_N} ({\tilde b}W_b -{\tilde g}) \varphi dS \right|
\\
&\leq \| W_{b} \|_{1} \|\varphi\|_{1} +|h|_{1} |\varphi|_{\infty}
 +C(|\tilde b|_{4/3,\Gamma_{N}}\| W_{b} \|_{1} +|\tilde g|_{1,\Gamma_{N}})|\varphi|_{\infty}
\\
& \leq Ck(|F|_{\kappa'}+|G|_{\be',\Gamma_{N}}),
\end{align*}
where we have used \eqref{tsineq} and \eqref{varphi1}
in deriving the first and second inequalities, respectively.

From \eqref{FGineq} and the duality of $L^p$ spaces,
we see that
\begin{equation}
|{\cal W}|_\kappa \le Ck,
\qquad
|{\cal W}|_{\be,\Gamma_{N}} \le Ck.
\label{Wineq0}
\end{equation}
In particular, the latter inequality of \eqref{W11Lp2} holds.
What is left is to prove the former one.
The H\"older inequality gives
\begin{equation}
\int_\Omega |\nabla {\cal W}|^{\alpha} dx
 \leq \left( \int_\Omega |\nabla {\cal W}|^{2} \frac{dF}{ds}({\cal W}) dx \right)^{\!\!\! \alpha/2} 
  \left\{ \int_\Omega \left( \frac{dF}{ds}({\cal W})\right)^{-\al/(2-\al)} dx
   \right\}^{\!\!\! 1-\alpha/2},
\label{Walineq}
\end{equation}
where
\begin{equation*}
F(s):=\int_0^s \frac{k^{\ga+1}}{k^{\ga+1}+|\tilde s|^{\ga+1}} d\tilde s.
\end{equation*}
It is easily seen that
\begin{equation*}
|F(s)| \le \left( \int_0^\infty \frac{1}{1+|\tilde s|^{\ga+1}} d\tilde s \right) k,
\qquad
0 \le \frac{dF}{ds}(s) \le 1.
\end{equation*}
Taking $\phi=F({\cal W}) \in H^1_D(\Om)$ in \eqref{weakeq} and using the above inequalities,
we have
\begin{align*}
&\int_\Omega |\nabla {\cal W}|^{2} \frac{dF}{ds}({\cal W}) dx
 +\int_{\Gamma_N} \tilde{b} {\cal W} F({\cal W}) dS
\\
&=-\int_{\Omega} \left( \nabla W_{b} \cdot  \nabla  {\cal W} \right) \frac{dF}{ds}({\cal W}) dx
 -\int_\Omega h F({\cal W}) dx -\int_{\Gamma_N} (\tilde{b}W_{b}-\tilde{g}) F({\cal W}) dS  
\\
&\leq \frac{1}{2} \int_\Omega |\nabla {\cal W}|^{2} \frac{dF}{ds}({\cal W})^2 dx
 +\frac{1}{2} \int_\Omega |\nabla W_{b}|^{2} dx +Ck |h|_1
  +Ck(|\tilde b|_{4/3,\Gamma_{N}}\| W_{b} \|_{1} +|\tilde g|_{1,\Gamma_{N}})
\\
&\leq \frac{1}{2} \int_\Omega |\nabla {\cal W}|^{2} \frac{dF}{ds}({\cal W}) dx
+Ck^{2}.
\end{align*}
Since the second term of the left-hand side is nonnegative,
we arrive at
\begin{equation*}
\int_\Omega |\nabla {\cal W}|^{2} \frac{dF}{ds}({\cal W}) dx \le Ck^2.
\end{equation*}
It follows from \eqref{Wineq0} that
\begin{equation*}
\int_\Omega \left( \frac{dF}{ds}({\cal W})\right)^{-\al/(2-\al)} dx
 =\int_\Omega \left\{ 1+\left( \frac{|{\cal W}|}{k}\right)^{\ga+1}\right\}^{\al/(2-\al)} dx
  \le C+C\int_\Omega \left( \frac{|{\cal W}|}{k}\right)^{\kappa} dx \le C.
\end{equation*}
Substituting these inequalities into \eqref{Walineq}, we obtain \eqref{W11Lp2}. 
Thus  the lemma follows.
\end{proof}

\begin{proof}[Proof of Lemma \ref{Traces}]
The assertion (i) immediately follows from the boundedness of the trace operator 
from $W^{1,q}(\Omega)$ to $L^q(\partial \Omega)$ and the Poincar\'{e} inequality
$|f|_q \leq C|\nabla f|_q$.

One can show (ii) by combining the boundedness of the trace operator 
from $W^{1,1}(\Omega)$ to $L^1(\partial \Omega)$,
the H\"{o}lder inequality and the Sobolev embedding theorem 
$H^1(\Omega) \hookrightarrow L^6(\Omega)$.
Indeed, for $f \in H^1(\Om)$,
we have
\begin{align*}
|f|_{4,\Gamma_N} &=\left| |f|^4 \right|_{1,\Gamma_N}^{1/4}
  \le C\left( \left| |\nabla f| |f|^3 \right|_1^{1/4} +|f^4|_1^{1/4} \right)
\le C\left( |\nabla f|_2^{1/4} |f|_6^{3/4} +|f|_6 \right) \le C\| f\|_1.
\end{align*}

Let us show (iii).
We need only consider the case $q \ge 3/2$ owing to the fact that 
$L^{q_1}(\Gamma_N) \hookrightarrow L^{q_2}(\Gamma_N)$ for $q_1 \ge q_2$.
Using \eqref{tpineq} and the H\"{o}lder inequality, 
we have
\begin{equation}\lb{qbes}
|f|_{q,\Gamma_N} =\left| |f|^q \right|_{1,\Gamma_N}^{1/q}
  \le C\left| |\nabla f| |f|^{q-1} \right|_1^{1/q}
   \le C|\nabla f|_2^{\theta_1} |f|_6^{\theta_2} |f|_1^{\theta_3},
\end{equation}
where
\begin{equation*}
\theta_1=\frac{1}{q},
\quad
\theta_2=\frac{3}{5} \left( 2 -\frac{3}{q}\right),
\quad
\theta_3=\frac{1}{5} \left( \frac{4}{q} -1\right).
\end{equation*}
We note that $\theta_1+\theta_2+\theta_3=1$
and that the condition $3/2 \le q<4$ gives
$0 \le \theta_1+\theta_2<1$ and $0<\theta_3 \le 1$.
The Sobolev embedding theorem $H^1(\Omega) \hookrightarrow L^6(\Omega)$
and the Poincar\'{e} inequality yield $|f|_6 \le C|\nabla f|_2$,
and therefore we see from \eqref{qbes} that 
$|f|_{q,\Gamma_N} \le C|\nabla f|_2^{\theta_1+\theta_2} |f|_1^{\theta_3}$.
We thus obtain \eqref{trineq} by applying the Young inequality
to the right-hand side of this inequality. 
\end{proof}

\begin{proof}[Proof of Lemma~\ref{A5}]
It suffices to show that 
\begin{equation}\label{efeineq0}
a \log (da) +b \log (db)
 \le C\left\{ (a-b)^2+\frac{ab-1}{a+b+2}\log (ab) +1 \right\},
\quad a,b>0
\end{equation}
for some constant $C=C(d)>0$,
since
\begin{align*}
\int_A^a \log \frac{y}{A} dy +\int_B^b \log \frac{y}{B} dy
 &=a \log \frac{a}{A} +b \log \frac{b}{B} -(a+b) +A+B
\\
&\le a\log (da) +b\log (db) +2d.
\end{align*}
Suppose that \eqref{efeineq0} fails.
Then there exist sequences 
$\{ a_j\} \subset (0,\infty)$ and $\{ b_j\} \subset (0,\infty)$ such that
$F(a_j,b_j) \to \infty$ as $j \to \infty$,
where
\begin{equation*}
F(a,b):=\left( a \log (da) +b \log (db) \right)
 \left\{ (a-b)^2+\frac{ab-1}{a+b+2}\log (ab) +1 \right\}^{-1}.
\end{equation*}
It is easily seen that either $\{ a_j\}$ or $\{ b_j\}$ is unbounded.
Therefore, by taking a subsequence if necessary,
we may assume that $a_j+b_j \to \infty$ and $a_j/b_j \to l \in [0,\infty]$ as $j \to \infty$.
We first consider the case $l \in (1,\infty]$. 
Then, in particular, $b_j <a_j$ holds for large $j$.
Hence $a_j \to \infty$ as $j \to \infty$ and 
$b_j \log (db_j) \le a_j \log (da_j)$ for large $j$.
From these, we have
\begin{align*}
F(a_j,b_j) &\le 2a_j \log (da_j) \cdot (a_j-b_j)^{-2}
 =\frac{2\log (da_j)}{a_j} \cdot \left( 1-\frac{b_j}{a_j}\right)^{-2}
  \to 0 \quad (j \to \infty),
\end{align*}
which contradicts $\lim_{j\to \infty }F(a_j,b_j)=\infty$.
By a similar argument, 
we have a contradiction for $l \in [0,1)$. 
Next we assume that $l=1$. 
In this case, 
we have $a_j,b_j \to \infty$ as $j \to \infty$
and $\log (da_j),\log (db_j) $ $\le \log (a_j b_j)$ for large $j$.
It follows that
\begin{align*}
F(a_j,b_j) &\le (a_j+b_j) \log (a_j b_j) 
 \cdot \left( \frac{a_j b_j-1}{a_j+b_j+2}\log (a_j b_j)  \right)^{-1}
\\
&=\frac{(a_j/b_j+1)(a_j/b_j+1+2/b_j)}{a_j/b_j -1/b_j^2}
 \to 4 \quad (j \to \infty),
\end{align*}
a contradiction.
Thus we obtain \eqref{efeineq0}, 
and the proof is complete.
\end{proof}

\begin{proof}[Proof of Lemma~\ref{A3}]
We take $M_*>\max \{ 2\sigma, e\}$ 
such that the following hold for $M \ge M_*$:
\begin{equation}
\frac{\sqrt{\log M}}{(\sqrt{\log M}+1)^2}M \ge 1,
\quad
\frac{(\log M)^{3/2}}{(\sqrt{\log M}+1)^2} \ge 2h(M),
\quad
\log \frac{M}{2\sigma} \ge h(M),
\quad
\frac{\sqrt{\log M}-1}{\sqrt{\log M}+1} \ge \frac{1}{2}.
\label{lMineq}
\end{equation}
It is sufficient to show that
\begin{equation}
\frac{1}{a+b} \left\{ (a-b) H_M \left(\log \frac{a_{\sigma}}{b_{\sigma}}\right) 
 +\frac{ab-1}{a+b+2} \log (ab) \right\} \ge \frac{h(M)}{2}
\label{A3ineq0}
\end{equation}
for all $a,b>0$ and $M \ge M_*$ with $a+b \ge M$.
We divide the proof into the following three cases:
\begin{align*}
\text{(a)} \ \frac{1}{\sqrt{\log M}} \leq \frac{a}{b} \leq \sqrt{\log M}, 
\qquad
\text{(b)} \ \frac{a}{b}>\sqrt{\log M}, 
\qquad 
\text{(c)} \ \frac{a}{b}<\frac{1}{\sqrt{\log M}}.
\end{align*}

We consider the case (a).
It is easily seen that $z/(z+1)^2 \ge z_0/(z_0+1)^2$ 
if $z_0 \ge 1$ and $1/z_0 \le z \le z_0$.
Hence, by the first inequality of \eqref{lMineq}, 
we have
\begin{equation*}
ab=\frac{a/b}{(a/b+1)^2} (a+b)^2 \ge \frac{\sqrt{\log M}}{(\sqrt{\log M}+1)^2} M^2 \ge M.
\end{equation*}
In particular, we have $ab \ge 2$, and hence $ab-1 \ge ab/2$.
This together with the fact that $a+b \ge M \ge 2$ shows that
\begin{equation*}
\frac{ab-1}{(a+b)(a+b+2)} \ge \frac{ab}{4(a+b)^2} =\frac{a/b}{4(a/b+1)^2}
 \ge \frac{\sqrt{\log M}}{4(\sqrt{\log M}+1)^2}.
\end{equation*}
Thus
\begin{equation*}
\frac{ab-1}{(a+b)(a+b+2)} \log (ab)
 \ge \frac{\sqrt{\log M}}{4(\sqrt{\log M}+1)^2} \log M \ge \frac{h(M)}{2},
\end{equation*}
where we have used the second inequality of \eqref{lMineq}.
This shows that \eqref{A3ineq0} holds in this case.

It remains to examine the cases (b) and (c). 
We only consider the case (b),
since the case (c) can be dealt with in the same way.
It is seen that
\begin{equation}
a=\frac{1}{1+b/a} (a+b)>\frac{1}{1+1/\sqrt{\log M}} M \ge \frac{M}{2}.
\label{cbaineq}
\end{equation}
Note that this particularly gives $a \ge \sigma$.
Hence we have
\begin{align*}
\log \frac{a_\sigma}{b_\sigma} =\log \frac{a}{b_\sigma}
 \geq \min \left\{ \log \frac{a}{b}, \log \frac{a}{\sigma} \right\}
  \geq \min \left\{ h(M), \log \frac{M}{2\sigma} \right\} =h(M),
\end{align*}
where the second inequality follows from the conditions (b) and \eqref{cbaineq},
and the last equality follows from the third inequality of \eqref{lMineq}.
This together with the last inequality of \eqref{lMineq} gives
\begin{align*}
\frac{a-b}{a+b}H_M \left(\log \frac{a_{\sigma}}{b_{\sigma}}\right) =\frac{a/b-1}{a/b+1} h(M)
 \ge \frac{\sqrt{\log M}-1}{\sqrt{\log M}+1}h(M) \ge \frac{h(M)}{2}.
\end{align*}
Thus \eqref{A3ineq0} is verified, and the proof is complete. 
\end{proof}

\begin{proof}[Proof of Lemma~\ref{A4}]
Let $f \in L^2(E)$ and $g \in L^2(E)$ 
be the limits of $\{f_k\}$ and $\{g_k\}$, respectively. 
Then the assumption $|f_k| \le C$ implies that $|f| \le C$.
For $M>0$,
we have
\begin{align*}
\int_E (f_kg_k-fg)^2 dx
 &\le 2\int_E f_k^2 (g_k-g)^2 dx +2\int_E (f_k-f)^2 g^2 dx \\
&=2\int_E f_k^2 (g_k-g)^2 dx
 +2\int_{|g|<M} (f_k-f)^2 g^2 dx +2\int_{|g| \ge M} (f_k-f)^2 g^2 dx \\
&\le 2C^2 \int_E (g_k-g)^2 dx 
 +2M^2 \int_E (f_k-f)^2 dx +8C^2 \int_{|g| \ge M} g^2 dx,
\end{align*}
and hence
\begin{equation*}
\limsup_{k \to \infty} \int_E (f_kg_k-fg)^2 dx \le 8C^2 \int_{|g| \ge M} g^2 dx.
\end{equation*}
Thus the lemma follows by letting $M \to \infty$.
\end{proof}
\end{appendix}

\noindent
{\bf Acknowledge.} \
T. Kan was supported by JSPS KAKENHI Numbers 19K14574.
M. Suzuki was supported by JSPS KAKENHI Numbers 18K03364.


\begin{thebibliography}{99} 

\bibitem{D} 
D.~Daners,
{\it A priori estimates for solutions to elliptic equations on non-smooth domains}, 
Proc. Roy. Soc. Edinburgh Sect. A {\bf 132} (2002), no.~4, 793--813.

\bibitem{Ga1} 
H. Gajewski,
{\it On the existence of steady state carrier distributions in semiconductors}, 
``Probleme und Methoden der Mathematischen Physik'', 
Teubner-Texte zur Mathematik. {\bf 63} (1983), 76--82.

\bibitem{Ga3} 
H. Gajewski, 
{\it On existence, uniqueness and asymptotic behavior of solutions of the basic equations for carrier transport in semiconductors}, 
Z. Angew. Math. Mech. {\bf 65} (1985), 101--108.

\bibitem{GG86} 
H. Gajewski, K. Gr{\"o}ger,
{\it On the basic equations for carrier transport in semiconductors}, 
J. Math. Anal. Appl. {\bf 113} (1986), no. 1, 12--35.


\bibitem{Gr} 
Gr{\"o}ger,
{\it On steady-state carrier distributions in semiconductor devices}, 
Apl. Mat. {\bf 32} (1987), no.~1, 49--56.

\bibitem{FI1}
{W. Fang and K. Ito}, 
{\it On the time-dependent drift-diffusion model for semiconductors}, 
{J. Differential Equations} {\bf 117} (1995), 245--280. 

\bibitem{FI2}
{W. Fang and K. Ito}, 
{\it Global solutions of the time-dependent drift-diffusion semiconductor equations}, 
{J. Differential Equations} {\bf 123} (1995), 523--566. 

\bibitem{FI3}
{W. Fang and K. Ito}, 
{\it Asymptotic behavior of the drift-diffusion semiconductor equations}, 
{J. Differential Equations} {\bf 123} (1995), 567--587. 

\bibitem{JJ}
{J. W. Jerome},
{\it Analysis of charge transport. A mathematical study of semiconductor devices},
{Springer} 1996.

\bibitem{JA}
{A. J\"ungel},
{\it Quasi-hydrodynamic semiconductor equations},
{Birkh\"auser Verlag, Basel} 2001.

\bibitem{KS1}
{T. Kan and  M. Suzuki}, 
{\it Existence and stability of time-periodic solutions to the drift-diffusion model for semiconductors}, 
{Bulletin of the Institute of Mathematics Academia Sinica} \textbf{10} (2015), no. 4, 615--638.


\bibitem{MRS}
P. A. Markowich,  C. A. Ringhofer and  C. Schmeiser,
{\it Semiconductor equations},
{Springer-Verlag, Vienna} 1990.

\bibitem{M.S.M1}
{M. S. Mock},
{\it On equations describing steady-state carrier distributions 
in a semiconductor device}, 
{Comm. Pure Appl. Math.} \textbf{25} (1972), 781--792. 

\bibitem{M.S.M2}
{M. S. Mock},
{\it An initial value problem from semiconductor device theory},
{SIAM J. Appl. Math.} \textbf{5} (1974), 597--612.

\bibitem{M.S.M3}
{M. S. Mock},
{\it Asymptotic behavior of solutions of transport equations 
for semiconductor devices},
{J. Math. Anal. Appl.} \textbf{49} (1975), 215--225.

\bibitem{M64}
{J. Moser},
{\it A Harnack inequality for parabolic differential equations},
{Comm. Pure Appl. Math.} \textbf{17} (1964), 101--134.


\bibitem{NS5}
{S. Nishibata and  M. Suzuki}, 
{\it Hierarchy of semiconductor equations: relaxation limits with initial layers for large initial data}, {Mathematical Society of Japan Memoirs} \textbf{26}, Mathematical Society of Japan, Tokyo, 2011.

\bibitem{W-V.R}
{W. V. Roosbroeck},
{\it Theory of the flow of electrons and holes in germanium and other semiconductors},
{Bell System Tech. j.} \textbf{29} (1950), 560--607.

\bibitem{R87}
{I. Rubinstein},
{\it Multiple steady states in one-dimensional electrodiffusion with local electroneutrality},
{SIAM J. Appl. Math.} \textbf{47} (1987), no.~5, 1076--1093.

\bibitem{T.S.80}
{T. I. Seidman}, 
{\it Steady-state solutions of diffusion-reaction systems with electrostatic convection}, 
Nonlinear Anal. \textbf{4} (1980), no.~3, 623--633. 

\bibitem{T.S.}
{T. I. Seidman}, 
{\it Time-dependent solutions of a nonlinear system arising in semiconductor theory. II. Boundedness and periodicity}, Nonlinear Anal. \textbf{10} (1986), no. 5, 491--502. 

\bibitem{S89a}
{H. Steinr\"uck}, 
{\it A bifurcation analysis of the one-dimensional steady-state semiconductor device equations}, 
SIAM J. Appl. Math. \textbf{49} (1989), no.~4, 1102--1121. 

\bibitem{S89b}
{H. Steinr\"uck}, 
{\it Asymptotic analysis of the current-voltage curve of a $pnpn$ semiconductor device}, 
IMA J. Appl. Math. \textbf{43} (1989), no.~3, 243--259. 

%

\end{thebibliography}
\end{document}